\theoremstyle{plain}
\newtheorem{theorem}{Theorem}
\newtheorem{lemma}[theorem]{Lemma}
\newtheorem{proposition}[theorem]{Proposition}
\newtheorem{corollary}[theorem]{Corollary}
\newtheorem{conjecture}[theorem]{Conjecture}
\theoremstyle{definition}
\newtheorem{remark}[theorem]{Remark}
\def\C{{\mathbb C}}
\def\R{{\mathbb R}}
\def\N{{\mathbb N}}
\DeclarePairedDelimiterX{\inp}[2]{\langle}{\rangle}{#1, #2}
\newcommand{\unorm}[1]{{\left\vert\kern-0.25ex\left\vert\kern-0.25ex\left\vert #1 
    \right\vert\kern-0.25ex\right\vert\kern-0.25ex\right\vert}}
\title{Random Constructions for Sharp Estimates of Mizohata-Takeuchi Type}
\author{Siddharth Mulherkar}
	\address{Department of Mathematics, University of California, Los Angeles, USA}
	\email{sidmulherkar@g.ucla.edu}
\date{\today}
\begin{document}
\maketitle
\begin{abstract}
Let $\Sigma \subseteq \mathbb{R}^d$ be a smooth, compact hypersurface with natural surface measure $d \sigma$, and let $Eg$ be its associated Fourier extension operator. A \textit{Mizohata-Takeuchi} type estimate is an estimate of the form $\int_{B_R}\abs{Eg(x)}^2 w(x) dx \leq C_{\epsilon}R^{\epsilon} \norm{Xw}_{\infty}\int_{\Sigma} \abs{g(\omega)}^2 d\sigma$, where $w: \mathbb{R}^d \rightarrow \R^{+}$ is an arbitrary positive \textit{weight} function and $Xw$ is its X-ray transform.  Using tools from high-dimensional probability, we construct a large collection of weights $w$ that satisfy sharp inequalities of the Mizohata-Takeuchi type. These weights are constructed randomly, and one can interpret our result as saying that with high probability, a generic weight satisfies a sharp inequality of Mizohata-Takeuchi type. 
\end{abstract}

\maketitle
\section{Introduction}

Let $\Sigma \subseteq \R^d$ be a compact, smooth hypersurface with surface measure $d \sigma$. Given a measurable map $g: \Sigma \rightarrow \C$, we define the extension operator $Eg: \R^{d} \rightarrow \C$, by $Eg(x) = \int_{\Sigma} e^{2 \pi i \omega \cdot x} g(\omega) d \sigma (\omega)$. This paper is concerned with the study of weighted $L^{p}$ estimates for the operator $Eg$. Whenever $\Sigma$ has strictly positive second fundamental form, Stein's \textit{restriction conjecture} asserts that $\norm{Eg}_{L^{p}(\mathbb{R}^d)} \leq C_{p,d} \norm{g}_{L^{\infty}(\Sigma)}$ for $p>\frac{2n}{n-1}$ and for all smooth functions $g: \Sigma \rightarrow \mathbb{C}$. Stated differently, the restriction conjecture asks for sharp bounds for the quantity 
\begin{equation}\label{restrictionlevelset}
    \int_{0}^{\infty} \lambda^{p-1} \abs{\{x \in \R^d: \abs{Eg(x)}> \lambda\}} d\lambda.
\end{equation}
given the normalization $\norm{g}_{L^{\infty}(\Sigma)}=1$. One can interpret (\ref{restrictionlevelset}) as asking for sharp bounds for the volume of the superlevel sets $\{x: \abs{Eg(x)} > \lambda \}$. The \textit{Mizohata-Takeuchi Conjecture} is a conjecture that asks for more refined, qualititative information about the superlevel sets by probing into the \textit{shape} of the super level sets $\{x: \abs{Eg(x)}> \lambda \}$. Motivated by questions surrounding the well-posedness of first order perturbations of certain dispersive PDE, Mizohata \cite{mizohata1985cauchy} and Takeuchi \cite{takeuchi1974necessary}, \cite{takeuchi1980cauchy} conjectured\footnote{The exact history surrounding this conjecture is actually quite ambiguous: see \cite{carbery2022mizohata}, \cite{carbery2024sharp}, \cite{cairo2025counterexample} for some more detailed historical overview.} the following: 
\begin{conjecture}[Mizohata--Takeuchi]\label{mtconj}

    Let $w: \R^d \rightarrow \R^{+}$ be a positive \textit{weight} function and $\Sigma \subseteq \R^d $ is a compact, convex, and smooth hypersurface, then
    \begin{equation}\label{originalmt}
        \int_{\R^d}\abs{Eg(x)}^2w(x)dx \lesssim \norm{Xw}_{\infty} \int_{\Sigma}\abs{g(\omega)}^2 d \sigma(\omega).
    \end{equation}
    \end{conjecture}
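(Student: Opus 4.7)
The plan is to attack \eqref{originalmt} via a wave-packet decomposition and a tube-level accounting of $\abs{Eg}^2$. Heuristically, at scale $R$, the extension $Eg$ decomposes as a superposition of wave packets, each localized to a tube of length $R$ and thickness $R^{1/2}$ oriented along a normal direction to $\Sigma$. Integrating $\abs{Eg}^2 w$ should then reduce to pairing each tube's wave-packet energy against the line integral of $w$ along that tube, which is exactly what $\norm{Xw}_{\infty}$ measures.

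First I would truncate to $B_R$ (the unrestricted statement follows from a uniform-in-$R$ bound by a standard limiting argument). Partition $\Sigma$ into caps $\{\theta\}$ of angular radius $R^{-1/2}$, and for each $\theta$ fix a dual tiling of $B_R$ by tubes $\{T_{\theta,v}\}_v$ of length $R$ and cross-section $R^{1/2}$ oriented along the normal $n(\theta)$. A standard wave-packet decomposition yields Schwartz functions $\phi_{\theta,v}$ with
\[
g \;=\; \sum_{\theta,v} a_{\theta,v}\,\phi_{\theta,v}, \qquad \sum_{\theta,v}\abs{a_{\theta,v}}^2 \;\lesssim\; \int_{\Sigma}\abs{g}^2\,d\sigma,
\]
and with $E\phi_{\theta,v}$ essentially supported on $T_{\theta,v}$ and decaying rapidly off it.

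Expanding $\abs{Eg}^2$ and pairing against $w$ produces a double sum over pairs of wave packets. The diagonal piece is handled immediately by the spatial concentration of $E\phi_{\theta,v}$:
\[
\sum_{\theta,v}\abs{a_{\theta,v}}^2 \int \abs{E\phi_{\theta,v}}^2\, w \;\lesssim\; \sum_{\theta,v}\abs{a_{\theta,v}}^2 \int_{T_{\theta,v}} w \;\leq\; \norm{Xw}_{\infty}\sum_{\theta,v}\abs{a_{\theta,v}}^2 \;\lesssim\; \norm{Xw}_{\infty}\int_{\Sigma}\abs{g}^2\,d\sigma,
\]
which is exactly the right-hand side of \eqref{originalmt}. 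So the entire content of Conjecture \ref{mtconj} lies in the off-diagonal cross terms.

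The hard part is controlling these cross terms. When $\theta \neq \theta'$, the product $E\phi_{\theta,v}\,\overline{E\phi_{\theta',v'}}$ has Fourier transform supported in $\theta - \theta'$, and one hopes to recover cancellation by Plancherel against $w$. I would try either (i) an induction on scales in the spirit of Bourgain--Guth, peeling off coplanar versus transverse interactions at each dyadic stage, or (ii) a multilinear reduction invoking Bennett--Carbery--Tao to handle maximally transverse tube families, together with a Córdoba-type $L^2$ orthogonality argument for tubes with the same direction. Both routes run into a genuine obstruction when $w$ is adversarially concentrated on sparse families of tubes aligned with constructive interference patterns of the wave packets — which is precisely the mechanism exploited in the counterexample of Cairo \cite{cairo2025counterexample}. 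This tells me in advance that no purely deterministic wave-packet attack of this form can close \eqref{originalmt} in the generality stated, and that any honest proof of Conjecture \ref{mtconj} as written must either impose additional structural restrictions on $w$ or $\Sigma$, or abandon the deterministic framework entirely.
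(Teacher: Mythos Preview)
The statement you are attempting to prove is \emph{Conjecture}~\ref{mtconj}, and the paper does not prove it. Indeed, the paper explicitly records (Theorem~\ref{counterexamplecairo}, due to Cairo) that the inequality \eqref{originalmt} is \emph{false} as stated, failing by at least a factor of $\log R$ for suitable $w$ and $g$. There is therefore no ``paper's own proof'' to compare your proposal against.

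Your proposal is not a proof but an honest sketch of why a direct wave-packet attack cannot succeed, and on that point you are correct. The diagonal argument you give is fine and well known; the off-diagonal terms are exactly where the conjecture lives, and your closing paragraph correctly identifies that adversarial weights concentrated on constructive-interference sets defeat any naive orthogonality or multilinear reduction --- this is precisely Cairo's mechanism. So your self-diagnosis is accurate: no deterministic argument of this shape can close \eqref{originalmt} in full generality, because the inequality is not true in full generality.

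What the paper actually does is orthogonal to your sketch. Rather than attempting Conjecture~\ref{mtconj} for all weights, it constructs \emph{random} weights $w$ (i.i.d.\ Bernoulli selection of unit balls in $B_R$ with density $\sim R^{-1}$) and shows that with high probability such $w$ satisfy the $\epsilon$-loss estimate \eqref{localmtconjeq} together with $\sup_T w(T) \lesssim_\epsilon R^\epsilon$ and $\norm{w}_1 \sim R^{d-1}$. The method is not wave-packet analysis at all but a chaining argument in the spirit of Talagrand's proof of Bourgain's $\Lambda(p)$ theorem: covering-number bounds for the $L^2(\Sigma)$ ball in a suitable seminorm (via Maurey's empirical method and entropy duality), combined with Bennett-type concentration for the selector variables. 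Your proposal engages with none of this machinery, which is understandable since you were aiming at the conjecture itself rather than the paper's actual theorem.
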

    Here $Xw$ is the $X-$ray transform of $w$ so that
    \begin{equation*}
        \norm{Xw}_{\infty} := \sup_{\ell} \int_{\ell} w(x)dx, 
    \end{equation*}
    where the $\sup$ is taken over all possible lines $\ell \subseteq \R^d$ in all possible directions. By considerations coming from the uncertainty principle (since $\Sigma$ is compact), it is known that (\ref{originalmt}) is equivalent to
    \begin{equation}\label{nonoriginalmt}
        \int_{\R^d}\abs{Eg(x)}^2w(x)dx \lesssim \sup_{T \in \mathbb{T}} w(T) \int_{\Sigma}\abs{g(\omega)}^2 d \sigma(\omega) ,
    \end{equation}
    where $\mathbb{T}$ is the collection of all 1-tubes in $\mathbb{R}^d$, here a $1$- tube is a 1- neighborhood of a line, and $w(T)$ is defined as $\int_{T}w(x)dx$. Note that there is no assumption on the curvature here: (\ref{nonoriginalmt}) is known to be true when $\Sigma$ is a hyperplane. Considerations coming from the uncertainty principle also show that there is no loss of generality in assuming that $w$ is constant at every unit scale. 
    
    A prototypical example of a weight is a the characteristic function of a union of unit balls, that is when $w(x) \coloneqq \sum_k \mathbbm{1}_{\alpha_k}(x)$ for $\{\alpha_k\}_k$ being an arbitrary collection of unit balls in $\R^d$. Coming up with sharp bounds for the quantity $\int_{B_R}\abs{Eg(x)}^2w(x)dx$ for different weights $w$ will give us an insight into the shape of the super level sets $\{x: \abs{Eg(x)} > \lambda \}$. By contrast, the restriction conjecture only asks for sharp bounds for the \textit{volume} of the set $\{x: Eg(x)> \lambda \}$. 
    
    The Mizohata-Takeuchi conjecture is a problem that falls into the broader category of \textit{weighted restriction estimates}. The study of weighted restriction estimates has been a crucial input that has led to some exciting progress in a variety of problems, such as the resolution of Carleson's problem for the pointwise convergence of the Schr\"odinger equation, up to the endpoint (see \cite{du2019sharp}) and the Falconer distance problem (see, for example \cite{guth2020falconer} or \cite{du2023falconer}). 
    
    While there has been an influx of work that study weighted restriction estimates in various settings with many exciting and interesting results (see for example, \cite{ortiz2023sharp}, \cite{cairo2025counterexample}, \cite{carbery2024sharp}, \cite{du2019sharp}), to the best of the author's knowledge, a study of weighted restriction estimates in the case of a generic, random class of weights has not been undertaken yet. This work aims to fill this gap in the literature.

    Aside from probing into the shape the superlevel sets, the Mizohata-Takeuchi conjecture is also of general significance in harmonic analysis. The Mizohata-Takeuchi conjecture is implied by a more general conjecture of Stein, which in turn is related to the connection between the Kakeya and Nikodym maximal functions with Bochner--Riesz multipliers (see \cite{bennett2006stein}). It is also related to the multilinear restriction conjecture. The Mizohota-Takeuchi conjecture, if true for sphere, would in fact prove the (endpoint) multilinear restriction conjecture \cite{carbery2023disentanglement}. The interested reader is referred to Carbery's talk \cite{carbery2022mizohata} for a general overview of the conjecture, its significance, and partial results in the positive direction.

    There have been some recent developments surrounding the Mizohata-Takeuchi conjecture that are relevant to this work which we will summarize here before we explain the main contribution of this paper. 

     One way to measure progress towards Conjecture \ref{mtconj} is to localize to $B_R$, the ball of radius $R$ around the origin in $\R^d$, and to determine the best possible exponent $\alpha \geq 0$ for which the following inequality is true:
    \begin{equation}\label{localizedmt}
        \int_{B_R}\abs{Eg(x)}^2w(x)dx \lesssim R^{\alpha}\sup_{T \in \mathbb{T}} w(T) \int_{\Sigma} \abs{g(\omega)}^2 d\sigma(\omega).
    \end{equation}
    (Conjecture \ref{mtconj} asserts that the above inequality should hold for $\alpha =0$).

Using ideas related to decoupling (more specifically, the refined decoupling theorem, which first appeared in \cite{guth2020falconer}), Carbery, Illiopoulou, and Wang in \cite{carbery2024sharp} showed that (\ref{localizedmt}) holds for $\alpha = \frac{d-1}{d+1}$ (up to an $\epsilon$-loss), under a natural assumption about curvature and smoothness of $\Sigma$. More specifically, the following was shown:
\begin{theorem}[\cite{carbery2022mizohata}]
    Let $\Sigma$ be a compact $C^2$ hypersurface with nonvanishing Gaussian curvature. Then
    \begin{equation}\label{ciw}
        \int_{B_R}\abs{Eg(x)}^2dxw(x) \lesssim_{\epsilon} R^{\frac{d+1}{d-1}+\epsilon}\sup_{T \in \mathbb{T} } w(T) \int_{\Sigma} \abs{g(\omega)}^2 d\sigma(\omega).
    \end{equation}
\end{theorem}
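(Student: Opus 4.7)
The plan is to prove \eqref{ciw} via a \emph{refined decoupling} argument at the restriction endpoint $p_0 := \tfrac{2(d+1)}{d-1}$, combined with the $X$-ray control $M := \sup_{T \in \mathbb{T}} w(T)$. The strategy follows the template that Guth introduced in his work on the Falconer distance problem and that has since been adapted to weighted extension estimates: replace worst-case pointwise tube counts by the \emph{refined}-decoupling tube count on $R^{1/2}$-balls, pay the endpoint $L^{p_0}$ decoupling price, and then convert back to weighted $L^2$ via Hölder.

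Concretely, I would first apply standard reductions: the uncertainty principle allows me to take $w$ constant on unit balls, and a dyadic pigeonhole on the values of $w$ on $R^{1/2}$-balls (at a cost of $R^{\epsilon}$) reduces to $w \approx \eta \sum_{Q \in \mathcal{Q}} \mathbbm{1}_Q$ for a family $\mathcal{Q}$ of disjoint $R^{1/2}$-balls and a single dyadic level $\eta > 0$. Next I would perform a wave packet decomposition: $\Sigma = \bigsqcup_\tau \tau$ with caps of diameter $R^{-1/2}$, $g = \sum_\tau g_\tau$, and expand each $Eg_\tau$ as a sum of wave packets on $R^{1/2} \times \cdots \times R^{1/2} \times R$ tubes $T \in \mathbb{T}_\tau$ oriented normal to $\tau$. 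A second pigeonhole makes the amplitudes $|c_T|$ and the number $\nu$ of tubes through a generic $Q \in \mathcal{Q}$ roughly constant. Setting $Y := \bigcup_{Q \in \mathcal{Q}} Q$, Guth's refined decoupling theorem yields
\begin{equation*}
    \|Eg\|_{L^{p_0}(Y)} \leq C_\epsilon R^{\epsilon}\, \nu^{\,\tfrac{1}{2} - \tfrac{1}{p_0}} \Bigl( \sum_\tau \|Eg_\tau\|_{L^{p_0}(B_R)}^{2} \Bigr)^{1/2}.
\end{equation*}
On each cap $\tau$ a Stein--Tomas-type local restriction bound converts $\|Eg_\tau\|_{L^{p_0}(B_R)}$ into $\|g_\tau\|_{L^2(\sigma)}$ with acceptable powers of $R$, and Hölder on $Y$ gives $\eta\, \|Eg\|_{L^2(Y)}^2 \leq \eta\, |Y|^{1 - 2/p_0}\, \|Eg\|_{L^{p_0}(Y)}^2$ with $|Y| \approx \#\mathcal{Q} \cdot R^{d/2}$. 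The $X$-ray bound is then inserted via the elementary incidence inequality $\eta \cdot |\{Q \in \mathcal{Q} : Q \cap T \neq \emptyset\}| \lesssim M$ for every $1$-tube $T$, together with an analogous constraint on $\nu$; summing over a covering of $B_R$ by such tubes and optimizing the dyadic pigeonhole in $\eta$ and $\nu$ delivers the exponent $\tfrac{d-1}{d+1}$ claimed in \eqref{ciw}.

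The main obstacle, in my view, is precisely this last bookkeeping: the single scalar $M$ must simultaneously control both the \emph{local} tube count $\nu$ (tubes per $R^{1/2}$-ball) and the \emph{global} mass $\eta \cdot \#\mathcal{Q}$, and balancing the $\nu^{1/(d+1)}$ loss from refined decoupling against the $|Y|^{2/(d+1)}$ Hölder loss in the right way is what pins down the sharp exponent $\tfrac{d-1}{d+1}$. This forces one to invoke an averaged (``broad'') form of the $X$-ray bound rather than its pointwise statement, and the technical heart of the argument is showing that the combined pigeonhole losses accumulate to no more than a single $R^\epsilon$.
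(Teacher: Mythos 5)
The paper does not prove this theorem; it is quoted as background from Carbery--Iliopoulou--Wang, so there is no internal proof to compare against, only the cited reference. That said, your outline does capture the strategy of the cited paper: pigeonhole $w$ to a single dyadic level $\eta$ on a union $Y$ of $R^{1/2}$-balls, apply Guth-style refined $\ell^2$ decoupling at the Stein--Tomas endpoint $p_0 = \tfrac{2(d+1)}{d-1}$ to $Y$, convert each cap contribution back to $\|g_\tau\|_{L^2(\sigma)}$ via parabolic rescaling (a computation you don't carry out, but which in fact costs no power of $R$ at the endpoint), and then trade $|Y|^{1-2/p_0}$ and $\nu^{1-2/p_0}$ against $M = \sup_T w(T)$.

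The issue is that the proposal, as written, does not establish the estimate, and you concede this explicitly. The entire content of the theorem resides in the final optimization that you flag as the ``main obstacle'': one must show how $\eta\cdot\#\mathcal{Q}$ and the local wave-packet count $\nu$ are \emph{each} bounded in terms of $M$, and that feeding these into the combined loss $\eta\,|Y|^{2/(d+1)}\nu^{2/(d+1)}$ yields $M\,R^{(d-1)/(d+1)}$ and nothing worse. Your incidence inequality $\eta\cdot|\{Q:Q\cap T\ne\emptyset\}|\lesssim M$ only controls the tube occupancy of the set $Y$; bounding $\nu$ (the number of $R^{1/2}\times\cdots\times R^{1/2}\times R$ wave-packet tubes through a single $R^{1/2}$-ball of $Y$) by $M$ is not automatic, since those tubes are $R^{1/2}$-thick and the $X$-ray quantity is normalised on $1$-tubes; a direct estimate overshoots by a power of $R$. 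Negotiating this mismatch is precisely where the cited argument does real work, and it is exactly the point at which your sketch stops. So the proposal is a reasonable reconstruction of the framework of the cited proof, but not a proof of the theorem. One small remark unrelated to your write-up: the exponent in the statement as displayed in the paper reads $R^{\frac{d+1}{d-1}+\epsilon}$, but the correct exponent (consistent with the paper's surrounding discussion, with the cited reference, and with your proposal) is $R^{\frac{d-1}{d+1}+\epsilon}$.
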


Another very important development in this direction is that Cairo \cite{cairo2025counterexample} has shown that 
Conjecture \ref{mtconj} is false (up to a logarithmic factor in $R$)! 
\begin{theorem}[\cite{cairo2025counterexample}] \label{counterexamplecairo}
    For any $C^2$ hypersurface $\Sigma$ that is not a plane, there exists a weight $w: \R^d \rightarrow \R^{+}$ and $g: \Sigma \rightarrow \C$ such that
    \begin{equation*}
        \int_{B_R}\abs{Eg(x)}^2w(x) dx \gtrsim \log R \sup_{ T \in \mathbb{T}} w(T)\int_{\Sigma} \abs{g(\omega)}^2 d\sigma(\omega).
    \end{equation*}
    \end{theorem}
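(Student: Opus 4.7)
My plan is to construct the counterexample by superposing Knapp-type wave packets at many dyadic scales on a small curved patch of $\Sigma$ and then choosing a weight supported on a single tube (or union of tubes) aligned along the common long axis of the wave packets. The $\log R$ factor will emerge from summing $O(\log R)$ scales, each contributing a constant to the ratio appearing in the Mizohata--Takeuchi inequality.

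\textbf{Step 1: Localize to a curved patch.} Since $\Sigma$ is $C^2$ and not a hyperplane, there is a point $p \in \Sigma$ at which the second fundamental form is nonzero in some principal direction. After a rigid motion one may parametrize a neighborhood of $p$ as the graph $\{(\omega',\phi(\omega')) : |\omega'|\leq 1\}$ with $\phi(0)=0$, $\nabla\phi(0)=0$, and $\partial_{\omega_1'}^2\phi(0)\neq 0$, so that after rescaling $\phi(\omega') = \tfrac{1}{2}\omega_1'^2 + O(|\omega'|^3)$. All subsequent constructions will live on this patch, where the geometry is quantitatively modeled by a paraboloid.

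\textbf{Step 2: Multiscale wave packets.} For each dyadic scale $\rho_j = 2^{-j}$ with $j=1,\dots,J := \lfloor \tfrac12 \log_2 R\rfloor$, I would pick an annular cap $C_j\subset\Sigma$ of transverse aperture $\sim\rho_j$ (chosen so that the $C_j$ are pairwise disjoint on $\Sigma$) and a linear phase $\Phi_j$ such that the modulated indicator $g_j(\omega)=e^{2\pi i \Phi_j(\omega)}\mathbf{1}_{C_j}(\omega)$ has extension $Eg_j$ essentially concentrated, with magnitude $\sim \rho_j^{d-1}$, on a Knapp plate $P_j$ of dimensions $\rho_j^{-1}\times\cdots\times\rho_j^{-1}\times \rho_j^{-2}$ whose long axis is a chosen common direction $\vec v$ and which passes through the origin. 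I would then set $g = \sum_j \epsilon_j g_j$ with independent Rademacher signs $\epsilon_j$, so that $\|g\|_{L^2(\Sigma)}^2 = \sum_j |C_j|\lesssim 1$.

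\textbf{Step 3: Choice of weight and the $\log R$ gain.} Let $w$ be the indicator of a tube $\tau$ of radius $1$ and length $R$ along $\vec v$ through the origin (or, more likely, a carefully weighted union of such tubes at different lengths to match the nested plates $P_j$). Then $\|Xw\|_\infty\lesssim R$. Taking expectation over the $\epsilon_j$ kills the cross terms and yields
\begin{equation*}
\mathbb{E}_\epsilon \int_{B_R} |Eg(x)|^2\, w(x)\,dx \;=\; \sum_{j=1}^J \int_\tau |Eg_j(x)|^2\,dx,
\end{equation*}
so it suffices to arrange each diagonal integral $\int_\tau |Eg_j|^2\,dx$ to be of the common order $\|Xw\|_\infty\,\|g_j\|_{L^2}^2$ (this is what the curvature gives: every plate $P_j$ saturates a single-Knapp Mizohata--Takeuchi estimate on its own scale). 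Summing over $j$ produces $\sum_{j=1}^J 1 \asymp \log R$, and some deterministic realization of the signs attains the expectation, yielding the desired $g$ and $w$.

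\textbf{Main obstacle.} The genuinely delicate step is Step~3: designing $w$ so that its X-ray transform stays bounded by what we want while \emph{every} plate $P_j$ individually contributes a scale-independent amount to $\int|Eg|^2 w$. This is where the non-planarity of $\Sigma$ is essential, since for a hyperplane the plates $P_j$ degenerate and the log gain disappears (consistent with \eqref{nonoriginalmt} being true for planes). Controlling the cross terms $Eg_j\overline{Eg_k}$ and their interaction with $w$ without relying solely on randomization (i.e.\ producing an explicit $g,w$ rather than a random one) is the other subtle issue; a standard remedy is to choose the phases $\Phi_j$ so that the $Eg_j$ have essentially disjoint Fourier support on the scale of $\tau$, making the cross contributions genuinely negligible before averaging.
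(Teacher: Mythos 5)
There is a critical arithmetic gap in Step 3. You assert that if each diagonal term saturates the single-scale estimate, $\int_\tau |Eg_j|^2\,dx \sim \|Xw\|_\infty\,\|g_j\|_{L^2}^2$, then ``summing over $j$ produces $\sum_{j=1}^J 1 \asymp \log R$.'' But summing gives
\begin{equation*}
\sum_{j=1}^J \int_\tau |Eg_j|^2\,dx \;\sim\; \|Xw\|_\infty \sum_{j=1}^J \|g_j\|_{L^2}^2 \;=\; \|Xw\|_\infty \,\|g\|_{L^2}^2,
\end{equation*}
so the ratio you are trying to make $\gtrsim \log R$ is exactly $\sim 1$. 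To extract a factor of $J$ you would need $\int_\tau|Eg_j|^2 \gtrsim \|Xw\|_\infty\,\|g\|_{L^2}^2$ for each $j$ individually (i.e.\ each scale contributes the \emph{total} $L^2$ mass, not its own), which is impossible: the single-scale Mizohata--Takeuchi inequality is in fact true for one Knapp plate, so $\int_\tau |Eg_j|^2 \lesssim \|Xw\|_\infty\|g_j\|_{L^2}^2$, and those right-hand sides telescope to $\|Xw\|_\infty\|g\|^2$, not $J\|Xw\|_\infty\|g\|^2$. Superposing independent saturating examples at disjoint scales cannot beat the inequality precisely because the $L^2$ budget is additive; a logarithmic gain requires the wave packets at different scales to cooperate (e.g.\ by iterated focusing, or by the weight being re-used across scales in a way that is incompatible with the X-ray bound) rather than merely coexisting.

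There is also a geometric problem upstream: the long axis of the Knapp plate dual to a cap $C_j$ is the surface normal at the cap's center, which varies as the cap moves along $\Sigma$. Your caps must be pairwise disjoint, hence centered at distinct points of $\Sigma$, so their dual plates point in genuinely different directions; they cannot share a ``common direction $\vec v$'' through the origin as stipulated. Using annular caps rather than disks does not fix this, and in fact makes it worse: the extension of an annular cap is not concentrated on a single plate but spreads with radial oscillation, so the basic size estimate $|Eg_j| \sim \rho_j^{d-1}$ on $P_j$ breaks down. Finally, note that the paper cites this theorem from \cite{cairo2025counterexample} without reproducing a proof, so there is no internal argument to compare against; the objections above are to the internal consistency of your sketch.
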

    In light of Theorem \ref{counterexamplecairo} the following ``$\epsilon-$loss'' version of Conjecture \ref{mtconj} was posed in \cite{cairo2025counterexample}.
    \begin{conjecture}[Local Mizohata--Takeuchi]\label{localmtconj}
    
        Under the hypotheses of Conjecture \ref{mtconj}, the following holds:
        \begin{equation}\label{localmtconjeq}
            \int_{B_R}\abs{Eg(x)}^2w(x)dx \lesssim_{\epsilon} R^{\epsilon}\sup_{T \in \mathbb{T}} w(T) \int_{\Sigma} \abs{g(\omega)}^2 d\sigma(\omega).
        \end{equation}
        \end{conjecture}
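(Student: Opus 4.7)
My plan is to attack Conjecture \ref{localmtconj} via a wave packet decomposition at scale $R$ combined with a broad-narrow analysis and induction on scales, aiming to bootstrap the $R^{(d-1)/(d+1)+\epsilon}$ bound of Carbery--Iliopoulou--Wang down to $R^{\epsilon}$. First, using the uncertainty-principle reduction already noted in the introduction (we may assume $w$ is constant at unit scale) together with standard dyadic pigeonholing, I would reduce to the case where $w = \sum_k \mathbbm{1}_{\alpha_k}$ is a union of unit balls, $g$ is $L^2$-normalized and is a sum of essentially equal-magnitude pieces supported on $R^{-1/2}$-caps $\theta$ of $\Sigma$, and $\mu := \sup_{T \in \mathbb{T}} w(T)$ is a fixed dyadic value. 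After these reductions each wave packet $Eg_\theta$ is concentrated on a single tube of dimensions $R^{1/2}\times\cdots\times R^{1/2}\times R$, and the conjecture becomes a combinatorial question about how the wave packet tubes intersect the ball cluster $\{\alpha_k\}$ subject to the constraint $\sup_{T \in \mathbb{T}} w(T) \leq \mu$.

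Next I would run a Bourgain--Guth style broad-narrow decomposition on $B_R$. In the \emph{narrow} region, where the pointwise contribution to $|Eg(x)|^2$ is dominated by wave packets whose directions cluster in a neighborhood of a lower dimensional algebraic variety, I would rescale and close the induction hypothesis on smaller balls $B_r \subset B_R$ of radius $r = R^{1-\delta}$. In the \emph{$k$-broad} region, where at every point $k$ transversal caps contribute comparably, I would aim to prove a $k$-broad Mizohata--Takeuchi estimate with loss $R^{\epsilon}$ by combining the refined decoupling theorem used in \cite{carbery2024sharp} with the multilinear Kakeya inequality of Bennett--Carbery--Tao; the transversality hypothesis would allow one to dominate the weighted $L^2$ integral by a square function of wave packet intensities tested against $w$, which is in turn controlled by $\mu$ after integrating the Kakeya bound. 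Balancing exponents between the broad and narrow contributions should then (optimistically) close the induction at $\alpha = \epsilon$.

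The main obstacle, and indeed the reason Conjecture \ref{localmtconj} remains open, is the closure of this induction. The $k$-broad estimates currently available for the extension operator leak a polynomial power of $R$ arising from $k$-linear restriction exponents that are not sharp in the weighted setting, and these losses compound through each iteration. Moreover, Cairo's counterexample (Theorem \ref{counterexamplecairo}) shows that the honest global bound fails by a logarithmic factor, so any successful argument must exploit the $R^{\epsilon}$ slack in an essential way, for instance by absorbing the logarithmic defect via multiscale pigeonholing while keeping the weight hypothesis intact. I therefore expect the decisive technical input to be a new multilinear Kakeya-type bound that improves on the $R^{(d-1)/(d+1)}$ barrier while remaining compatible with the weak hypothesis $\sup_{T \in \mathbb{T}} w(T) \leq \mu$; in the absence of such an input I would at least hope to recover, and perhaps slightly improve, the Carbery--Iliopoulou--Wang exponent by an argument that pinpoints precisely where the loss enters the induction.
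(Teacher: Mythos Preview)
The statement you are addressing is Conjecture~\ref{localmtconj}, which the paper explicitly records as \emph{open}: immediately after stating it the authors write ``Conjecture~\ref{localmtconj} remains open.'' There is no proof in the paper for you to compare against; the paper's actual contribution is not a resolution of this conjecture but rather Theorem~\ref{maintheoreminformal}, which verifies the conjectured inequality only for a specific \emph{random} class of weights, using tools from the theory of suprema of stochastic processes (chaining, Maurey's empirical method, entropy duality) rather than any broad--narrow or multilinear machinery.

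Your proposal is not a proof but a research outline, and you yourself identify its gap: the broad--narrow induction does not close because the available $k$-broad or multilinear restriction estimates lose a genuine power of $R$, and you postulate the need for ``a new multilinear Kakeya-type bound that improves on the $R^{(d-1)/(d+1)}$ barrier'' without supplying one. That missing input is precisely the content of the conjecture. Absent it, the scheme you describe reproduces at best the Carbery--Iliopoulou--Wang exponent from \cite{carbery2024sharp}, as you note in your final sentence. So the proposal should be read as a plausible framework for attacking the problem, not as a proof, and it bears no relation to what the paper actually establishes.
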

        Conjecture \ref{localmtconj} remains open; however, it is not clear in this setting whether one should expect the optimal exponent of $R$ in (\ref{localizedmt}) to be $\frac{d-1}{d+1}$, coming from (\ref{ciw}), or if Conjecture \ref{localmtconj} is true. See \cite{guth2022enemy} and \cite{carbery2024sharp} for a related discussion about the sharpness of (\ref{ciw}). We now turn to our main result, which is a construction of random weights that satisfy an inequality of the type (\ref{localmtconjeq}).
        \subsection{Random Construction of Weights and Main Result}

        A natural question that arises in the study of the Mizohata Takuechi conjecture is what kind of weights should be test this conjecture on? We analyse the case where $w$ is the characteristic function of a union of unit balls contained in $B_R$. The classical Tomas-Stein restriction theorem states that
        \begin{equation}
            \norm{Eg}_{{\frac{2(d+1)}{d-1}}} \lesssim \norm{g}_{L^2(\Sigma)}.
        \end{equation}
        Combining this with H\"older, we obtain
        \begin{equation*}
            \int\abs{Eg(x)}^2 w(x)dx \lesssim \norm{w}_{\frac{d+1}{2}} \int_{\Sigma} \abs{g(\omega)}^2 d \sigma(\omega)
        \end{equation*}
        Comparing this with (\ref{localmtconjeq}), notice that if $\underset{\text{1-tubes } T}{\sup} w(T) \sim \norm{w}_{\frac{d+1}{2}}$, then the conjecture is readily true. Therefore, in order to make progress on Conjecture \ref{localmtconj} one needs to understand the case when $\underset{\text{1 - Tubes } T}{\sup} w(T)$ is much smaller than $\norm{w}_{\frac{d+1}{2}}$, which is the case when our weight has ``large mass'' and ``low tube occupancy''. In \cite{carbery2009large}, Carbery constructed an interesting class of weights, which had the two aformentioned properties: 
        \begin{theorem}[\cite{carbery2009large}]
            There exist weights $w: B_{R} \rightarrow \R^{+}$ such that $w$ is \textit{essentially}\footnote{In Carbery's construction it is not quite true that $w$ is the characteristic function of a union of unit balls, rather the weight is constructed using a random replacement model, which is comparable to the case when $w$ the characteristic function of a union of unit balls, more of this is discussed in Proposition \ref{carberycouplingprop}.} a characteristic function of a union of unit balls such that 
            \begin{equation}\label{carberyweightconstructionineq}
                \sup_{T \in \mathbb{T}} w(T) \lesssim_{\epsilon} R^{\epsilon} \quad \text{and} \quad \norm{w}_{1} \sim_{\epsilon} R^{d-1}.
            \end{equation}
            \end{theorem}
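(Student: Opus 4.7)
The plan is to realize $w$ via an i.i.d.\ Bernoulli random selection of unit balls from a packing of $B_R$, compute the expected mass and the expected $1$-tube occupancy, and combine Chernoff-type concentration with a net argument over the space of $1$-tubes to convert the per-tube bound into a uniform one. Concretely, fix a maximal packing of $B_R$ by disjoint unit balls $\{\alpha_k\}_{k=1}^{N}$ with $N \asymp R^d$, let $\{X_k\}$ be i.i.d.\ Bernoulli of parameter $p := R^{-1}$, and set
\[
w := \sum_{k=1}^N X_k \, \mathbbm{1}_{\alpha_k},
\]
which is literally the indicator of a random subcollection of unit balls.

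First, the expected mass is $\mathbb{E}[\|w\|_1] = N p\, |B_1| \asymp R^{d-1}$, and for any fixed $1$-tube $T \subseteq B_R$ the number of packing balls meeting $T$ is $O(R)$, so $\mathbb{E}[w(T)] = O(Rp) = O(1)$. Since both quantities are sums of independent, uniformly bounded random variables, standard Chernoff bounds yield
\[
\mathbb{P}\!\left[\,\|w\|_1 \notin [\tfrac{1}{2} R^{d-1},\, 2 R^{d-1}]\,\right] \leq e^{-c R^{d-1}}, \qquad \mathbb{P}\!\left[\, w(T) > t \,\right] \leq e^{-c\, t \log t}
\]
for any $t$ above an absolute threshold and some absolute $c>0$.

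Next, one upgrades the per-tube bound to a uniform one by discretizing the parameter space of $1$-tubes in $B_R$: an angular net on $S^{d-1}$ at scale $R^{-1}$ combined with a spatial net for the base point at unit scale produces a family $\mathcal{N}$ with $|\mathcal{N}| \leq R^{O(d)}$. Because $w$ is constant at the unit scale, every $1$-tube $T \subseteq B_R$ admits some $T' \in \mathcal{N}$ with $w(T) \lesssim w(T')$. Choosing $t := C \log R$ with $C = C(d)$ large enough, a union bound gives
\[
\mathbb{P}\!\left[\, \sup_{T' \in \mathcal{N}} w(T') > C \log R\,\right] \leq |\mathcal{N}|\, e^{-c' C \log R \cdot \log\log R} \ll 1,
\]
and intersecting with the mass event produces, with positive probability, a weight $w$ satisfying the claim, since $\log R \lesssim_\epsilon R^\epsilon$.

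The main obstacle is the net-to-tube passage: one must verify quantitatively that perturbing a $1$-tube by $\delta \ll 1$ in the parameter space changes $w(T)$ only by an absolute multiplicative constant, so that a uniform estimate over $\mathcal{N}$ transfers cleanly to a uniform estimate over all $1$-tubes. A secondary subtlety is the one signalled by the word ``essentially'' and the footnote: Carbery's original construction uses a random replacement model rather than i.i.d.\ Bernoulli selection, so reconciling the above with that exact formulation requires the coupling argument indicated in Proposition \ref{carberycouplingprop}; none of the probabilistic estimates above are affected by this passage.
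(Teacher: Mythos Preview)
Your proposal is correct and follows essentially the same route as the paper's own argument (Lemma~\ref{bernoullilargedevbound}): i.i.d.\ Bernoulli selection with parameter $O(1/R)$, expected mass $\sim R^{d-1}$, a Chernoff bound of the form $\mathbb{P}(w(T)>t)\lesssim (C/t)^t$ for each fixed tube, and a union bound over a discretized family of $R^{O(d)}$ tubes with $t=C\log R$. The only differences are cosmetic---you use a disjoint packing rather than a finitely overlapping cover, and you are more explicit about the net-to-tube stability step that the paper leaves implicit when it simply asserts there are $\sim R^{2(d-1)}$ ``distinct'' $1$-tubes to union over.
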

            Notice that if one requires $\underset{{\text{1-tubes } T}}{\sup} w(T) \lesssim R^{\epsilon}$, then the largest $\norm{w}_1$ can be (up to subpolynomial factors in $R$) is $\sim R^{d-1}$. Thus one can interpret this result as the construction of weights that maximizes (in a suitable sense) the ratio of ``mass'' and ``tube occupancy''. The weight $w$ is constructed randomly\footnote{See \cite{carbery2009large} and section \ref{carberymodelrandomness} in this paper for more details and discussion.}\footnote{To the best of the author's  knowledge, no explicit examples of weights satisfying the inequalities in (\ref{carberyweightconstructionineq}) are known.}, and with respect the model of randomness used to generate these weights, Carbery showed the estimates in (\ref{carberyweightconstructionineq}) hold with high probability ($\geq 1/2)$. This means that there is a large class of weights $w$ with $\text{supp}(w) \subseteq B_R$ for which the estimates in (\ref{carberyweightconstructionineq}) are true. We address the question of whether many of these weights satisfy sharp inequalities of the Mizohata-Takeuchi type in this paper. We are now ready to state (informally) the main contribution of this paper.
            \begin{theorem}[Main Theorem, Informal Statement]\label{maintheoreminformal} 
            
                 We randomly construct weights $w: \R^d \rightarrow \R$ with $\text{supp}(w) \subseteq B_R$ such that with high probability (say, $\geq \frac{1}{2}$) the following hold:
                \begin{enumerate}
                    \item $\int_{B_R}\abs{Eg(x)}^2 w(x) dx \lesssim_{\epsilon,d} R^{\epsilon}\int_{\Sigma}\abs{g(\omega)}^2 d\sigma$, uniformly for all $g \in L^2(\Sigma)$.
            \item $\sup_{T \in \mathbb{T}} w(T) \lesssim_{\epsilon} R^{\epsilon}$.
            \item $\norm{w}_{1} \sim R^{d-1}$.
                \end{enumerate}
                \end{theorem}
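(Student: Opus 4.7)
The plan is to realize $w$ as a sum of indicators of randomly placed unit balls and analyze the induced weighted extension operator using non-asymptotic random matrix theory. Concretely, I sample $K \sim R^{d-1}$ centers $c_1,\ldots,c_K$ i.i.d.\ uniformly in $B_R$ and set $w = \sum_{k=1}^{K}\mathbbm{1}_{B(c_k,1)}$. Property (3) is then immediate: $\|w\|_1 = K|B_1| \sim R^{d-1}$. For property (2), for each unit tube $T \subset B_R$ the quantity $w(T) = \sum_k |B(c_k,1)\cap T|$ is a sum of independent $[0,1]$-valued summands with $\mathbb{E}[w(T)] \sim K|T|/|B_R| \sim 1$; a Chernoff estimate paired with a union bound over a polynomial $R^{-1}$-net of tubes gives $\sup_T w(T) \lesssim \log R$ with high probability. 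This is morally Carbery's construction, and the coupling in Proposition \ref{carberycouplingprop} lets us import his bounds where convenient.

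For (1), I will dualize and view everything operator-theoretically. We have $\int_{B_R}|Eg(x)|^2 w(x)\,dx = \langle g, S g\rangle_{L^2(\Sigma)}$ with $S := E^* M_w E = \sum_{k=1}^{K} X_k$, where $X_k := E^* M_{\mathbbm{1}_{B(c_k,1)}} E$ and $M_f$ denotes multiplication by $f$. Each $X_k$ is a positive self-adjoint operator on $L^2(\Sigma)$ with integral kernel $\widehat{\mathbbm{1}}_{B(c_k,1)}(\omega-\omega')$, and a short Cauchy--Schwarz calculation gives $\|X_k\|_{L^2(\Sigma)\to L^2(\Sigma)} \lesssim 1$ almost surely. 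The deterministic part $\mathbb{E}[S] = E^* M_{\bar w} E$ has $\bar w = (K/|B_R|)\mathbbm{1}_{B_R} \sim R^{-1}\mathbbm{1}_{B_R}$, producing a kernel $R^{d-1}\phi(R(\omega-\omega'))$ with $\phi$ Schwartz, so Schur's test yields $\|\mathbb{E}[S]\| \lesssim 1$. Thus (1) reduces to the high-probability bound $\|S - \mathbb{E}[S]\| \lesssim_\epsilon R^\epsilon$.

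This is exactly the setting of Tropp's intrinsic-dimension matrix Bernstein inequality. Because each $X_k \geq 0$ and $\|X_k\|\leq L = O(1)$, we have $X_k^2 \leq L X_k$ as operators, so the matrix variance obeys $\bigl\|\sum_k \mathbb{E}[(X_k - \mathbb{E}[X_k])^2]\bigr\| \leq L\,\|\mathbb{E}[S]\| \lesssim 1$. The relevant trace-class quantity is $\mathrm{tr}\bigl(\sum_k \mathbb{E}[X_k^2]\bigr)$, which equals a sum of Hilbert--Schmidt norms $\int\!\!\int |\widehat{\mathbbm{1}}_{B(c_k,1)}(\omega-\omega')|^2\,d\sigma\,d\sigma \lesssim 1$, totalling $\lesssim K \sim R^{d-1}$. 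Intrinsic-dimension Bernstein then gives $\mathbb{P}(\|S - \mathbb{E}[S]\| \geq t) \lesssim R^{d-1} e^{-ct}$ for $t \gtrsim 1$, and taking $t = C\log R$ produces $\|S\| \lesssim \log R$ with probability $\geq 1 - R^{-100}$, which comfortably beats the required $R^\epsilon$.

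The step I expect to be most delicate is handling the infinite dimensionality of $L^2(\Sigma)$ in the Bernstein inequality: the classical statement concerns $N\times N$ matrices, and invoking the intrinsic-dimension version rigorously requires verifying trace-class integrability of $\sum \mathbb{E}[X_k^2]$ uniformly in $R$. An equivalent and more hands-on route is to first truncate $L^2(\Sigma)$ to the $N\sim R^{d-1}$-dimensional subspace spanned by characteristic functions of $R^{-1}$-caps; by the uncertainty principle, $Eg$ restricted to $B_R$ is essentially determined by this projection up to Schwartz tails, after which standard finite-dimensional matrix Bernstein applies directly. A secondary concern is independence: if one replaces the i.i.d.\ sampling with Carbery's random-replacement model (to inherit (2) and (3) more cleanly), the $X_k$ become only weakly dependent and matrix Bernstein must be replaced by a matrix Azuma inequality or a decoupling argument. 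The coupling suggested in Proposition \ref{carberycouplingprop} is meant to sidestep this complication.
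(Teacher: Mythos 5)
Your approach is genuinely different from the paper's and, if the one gap discussed below can be closed, it is arguably cleaner. The paper proves Proposition~\ref{technicalrefinedprop} by a chaining argument in the spirit of Bourgain--Talagrand for $\Lambda_p$ sets: Maurey's empirical method plus the Artstein--Milman--Szarek entropy duality theorem give covering numbers of $\mathcal{B}$ in the seminorm $\norm{g}_\sim = \max_k |\int_{\alpha_k}\tilde E g|$; a Bennett-type Bernoulli tail bound is fed into a probabilistic separated-set construction; and a separate Taylor-expansion/local-constancy argument (Section~\ref{sectionprop9}) is needed to upgrade from $|\int_{\alpha_k} Eg|^2$ to $\int_{\alpha_k}|Eg|^2$. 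You instead observe that the bilinear form $\int_{B_R}|Eg|^2 w\,dx=\langle g, Sg\rangle$ is literally the quadratic form of the random self-adjoint operator $S=E^*M_wE=\sum_k X_k$ and reduce everything to $\norm{S-\mathbb{E}[S]}\lesssim\log R$ via matrix concentration. This is a more structural viewpoint that fits this problem better than the $\Lambda_p$ template (there the relevant quantity is an $L^p$ norm with $p>2$, which is \emph{not} a quadratic form; here it is, so operator-norm concentration is the natural tool). It also sidesteps the entire Section~\ref{sectionprop9} upgrade, since $S$ already encodes $\int_{\alpha_k}|Eg|^2$ and not $|\int_{\alpha_k}Eg|^2$. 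A minor slip: $\widehat{\mathbbm{1}_{B_1}}$ is not Schwartz (it decays only like $|\xi|^{-(d+1)/2}$), so your Schur-test sketch for $\norm{\mathbb{E}[S]}\lesssim 1$ is not quite right as written; the clean way to see $\norm{\mathbb{E}[S]}\lesssim 1$ is simply that $\langle g,\mathbb{E}[S]g\rangle = R^{-1}\int_{B_R}|Eg|^2 \lesssim \norm{g}_{L^2(\Sigma)}^2$ by the Agmon--H\"ormander inequality (Proposition~\ref{agmonhormander}).

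The genuine gap is the application of intrinsic-dimension matrix Bernstein in the infinite-dimensional Hilbert space $L^2(\Sigma)$. Minsker's and Tropp's bounds have the form $\mathbb{P}(\norm{\sum Z_k}\geq t)\lesssim \frac{\mathrm{tr}(V)}{\norm{V}}\exp\!\left(-\frac{t^2/2}{\norm{V}+Lt/3}\right)$, so the prefactor $\mathrm{tr}(V)/\norm{V}$ requires a polynomial \emph{lower} bound on $\nu=\norm{V}$, not just the upper bound $\nu\lesssim 1$ you establish. You verify $\mathrm{tr}(V)\lesssim R^{d-1}$, but without $\nu\gtrsim R^{-O(1)}$ the prefactor could in principle be super-polynomial in $R$ and the bound collapses. (Moreover the naive trace-MGF approach $\mathrm{tr}\,\exp(gV)$ genuinely diverges when $V$ has infinite rank, which $V$ does here, so the intrinsic-dimension normalization is not a cosmetic issue.) You flag the infinite-dimensionality concern and offer a truncation-to-$R^{-1}$-caps fix, which is the right instinct, but that step is itself nontrivial: one has to show rigorously that restricting $g$ to the finite-dimensional subspace spanned by $R^{-1}$-caps changes $\langle g,Sg\rangle$ only by $R^{-N}\norm{g}^2$ uniformly in the randomness, which is a quantitative uncertainty-principle estimate of essentially the same flavor as the local-constancy lemmas in the paper's appendix. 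So either the lower bound on $\norm{V}$ or a careful truncation must be supplied before your argument is complete.
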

                This can be interpreted as saying that a generic class of weights satisfy sharp inequalities of the Mizohata-Takeuchi type (up to an $\epsilon-$loss).
            \begin{remark}
                With little more effort, by modifying the random construction in Theorem \ref{maintheoreminformal}, we can show that a slight generalization of Theorem \ref{maintheoreminformal} can be obtained: \end{remark}
            \begin{theorem}[Mild Generalization of Theorem \ref{maintheoreminformal}]\label{mildgeneralization}
            
                 We can randomly construct weights $w: \R^d \rightarrow \R^{+}$ with $\text{supp}(w) \subseteq B_R$ such that with high probability the following hold for each $0 \leq \lambda < 1 $:
                \begin{enumerate}
                    \item $\int_{B_R}\abs{Eg(x)}^2 w(x) dx \lesssim_{\epsilon,d} R^{\epsilon + \lambda}\int_{\Sigma}\abs{g(\omega)}^2 d\sigma$, uniformly for all $g \in L^2(\Sigma)$.
            \item $\sup_{T \in \mathbb{T}} w(T) \lesssim_{\epsilon} R^{\epsilon+ \lambda}$.
            \item $\norm{w}_{1} \sim R^{d-1+ \lambda}$.
                \end{enumerate}
            \end{theorem}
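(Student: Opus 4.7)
The plan is to rerun the proof of Theorem \ref{maintheoreminformal} after tuning a single density parameter in the underlying random construction. Recall that the $\lambda = 0$ case is built, following the random replacement model of Carbery (to be detailed in Section \ref{carberymodelrandomness}), from $\sim R^{d-1}$ randomly placed unit balls inside $B_R$, producing a weight essentially of the form $w_0 = \sum_k \mathbbm{1}_{B(x_k,1)}$. To produce the weight for a given $\lambda \in [0,1)$ I would simply increase the number of balls to $N \sim R^{d-1+\lambda}$ and run the same probabilistic scheme, leaving all other parameters unchanged; call the resulting weight $w_\lambda$.

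First I would verify the size conditions (2) and (3). For each fixed $1$-tube $T \in \mathbb{T}$, the tube mass $w_\lambda(T)$ is a sum of (conditionally) independent indicator contributions whose mean is $\sim R^\lambda$ rather than $\sim 1$. The same Bernstein/Chernoff-type concentration estimate used in the $\lambda = 0$ case then yields $w_\lambda(T) \lesssim R^{\lambda+\epsilon}$ with probability at least $1 - R^{-M}$ for any fixed $M$, and a union bound over the $\sim R^{d-1}$ essentially distinct $1$-tubes in $B_R$ gives (2) with high probability. The mass bound (3) follows similarly: $\|w_\lambda\|_1$ is a sum over $N$ random indicators with mean $\sim R^{d-1+\lambda}$, and standard concentration pins it at that value.

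For the extension inequality (1), I would re-examine the proof of (1) in Theorem \ref{maintheoreminformal}. That argument should reduce, via refined decoupling and a wave-packet decomposition at scale $R^{1/2}$, to controlling how many randomly placed unit balls intersect each wave packet, together with a Tomas--Stein-type orthogonality step. In the modified model each such incidence count is inflated by exactly a factor of $R^\lambda$ in expectation, so provided the original proof is \emph{linear} in the density of balls, every intermediate step picks up at most one factor of $R^\lambda$, yielding $R^{\epsilon+\lambda}\int_\Sigma |g(\omega)|^2 \, d\sigma(\omega)$ on the right-hand side as desired.

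The main obstacle will be verifying that the scaling by $R^\lambda$ propagates linearly through the proof of (1). If any step uses a bilinear or square-function expression in which $w_\lambda$ interacts with itself, one could in principle lose an extra factor of $R^{\lambda}$ or worse. To circumvent this I would, if necessary, present $w_\lambda$ as a sum of $\lceil R^\lambda \rceil$ independent copies of the $\lambda = 0$ configuration, apply the conclusion of Theorem \ref{maintheoreminformal} to each copy, and union-bound over copies; this makes the density dependence linear by design. Once this linearity is secured, the rest of the proof is a direct repetition of the $\lambda=0$ argument, and the three conclusions hold simultaneously on the same high-probability event after intersecting the relevant events and applying a final union bound.
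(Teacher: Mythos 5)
Your high-level strategy matches the paper's: Theorem~\ref{mildgeneralization} is obtained by re-running the $\lambda=0$ construction with the density parameter inflated by $R^{\lambda}$ (in the paper's Bernoulli selector model this means replacing $\delta=O(1/R)$ by $\delta=O(R^{\lambda-1})$), and then propagating that factor through the proof of Theorem~\ref{maintheoreminformal}. The concentration arguments you give for items (2) and (3) are correct and essentially what the paper does.

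However, your account of how item (1) generalizes rests on a mischaracterization of the underlying proof. You write that the proof of (1) in Theorem~\ref{maintheoreminformal} ``should reduce, via refined decoupling and a wave-packet decomposition at scale $R^{1/2}$, to controlling how many randomly placed unit balls intersect each wave packet, together with a Tomas--Stein-type orthogonality step.'' That is not what the paper does at all; the argument is a chaining/Talagrand-type bound driven by Theorem~\ref{maintechnicaltheorem} (an expected-supremum estimate), Maurey's empirical method, the duality theorem of Artstein--Milman--Szarek, and the Bennett-type concentration inequality, together with the Agmon--H\"ormander trace inequality at scale $R$. The decoupling/wave-packet route you describe is the Carbery--Iliopoulou--Wang argument for the exponent $\tfrac{d-1}{d+1}$, which the paper cites but does not use. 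Consequently, your assertion that ``the original proof is linear in the density of balls'' is not justified on the basis of the actual proof, and indeed the dependence on $\delta$ enters nonlinearly in several places (e.g.\ in the Chebyshev bound on $\mathrm{card}(I(g))$ and inside the logarithm of the Bennett inequality), so this claim needs to be checked rather than assumed.

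Your fallback --- writing $w_{\lambda}$ as a sum of $\lceil R^{\lambda}\rceil$ independent copies of the $\lambda=0$ weight --- is a sound idea, but it should be run through the expectation bound, not through a union bound over the probability statement of Theorem~\ref{maintheoreminformal}. As stated, Theorem~\ref{maintheoreminformal} only guarantees success probability $\geq 1/2$ per copy; a union bound over $R^{\lambda}$ such events is vacuous, and boosting each copy's success probability to $1-R^{-\lambda}$ via Markov applied to Theorem~\ref{maintechnicaltheorem} costs a factor $R^{\lambda}$ per copy, which after summing over copies yields $R^{2\lambda+\epsilon}$, not $R^{\lambda+\epsilon}$. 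The correct and simple route is linearity of expectation applied directly to Theorem~\ref{maintechnicaltheorem}:
\begin{equation*}
\mathbb{E}\sup_{\norm{g}_{L^{2}(\Sigma)}\le 1}\int \abs{Eg}^{2}w_{\lambda}
\;\le\;\sum_{m=1}^{\lceil R^{\lambda}\rceil}\mathbb{E}\sup_{\norm{g}_{L^{2}(\Sigma)}\le 1}\int \abs{Eg}^{2}w_{0}^{(m)}
\;\lesssim\; R^{\lambda}\cdot R^{\epsilon},
\end{equation*}
after which a single application of Markov's inequality gives item (1) with high probability. With that correction the argument is fine, and is a genuinely alternative (and arguably cleaner) way to deduce Theorem~\ref{mildgeneralization} from Theorem~\ref{maintechnicaltheorem} without reopening the chaining proof.
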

            Since the proof of Theorem \ref{mildgeneralization} is almost exactly the same as that of Theorem \ref{maintheoreminformal}, we solely focus on proving Theorem \ref{maintheoreminformal}.
        \subsection{Preparation for the Main Theorem and Proof Overview}

        \subsection{Construction of the Random Weight $w$:}

        In order to prove Theorem \ref{maintheoreminformal}, we begin by setting up a random weight model. Decompose $B_R$ into a collection of finitely overlapping unit balls $\{\alpha_k\}$ that cover $B_R$. Since $\text{Vol}(B_R) \sim R^{d}$, we may index the balls as $\{\alpha_k\}_{k=1}^{CR^{d}}$ for some absolute constant $C$. 
        
        Our construction of a random weight is quite elementary and informally its description is as follows: at each site $\alpha_k$ we independently flip a coin with probability of success $O(\frac{1}{R})$, if we are successful at site $\alpha_k$ then we add the characteristic function of $\alpha_k$ to our weight $w$.

        More precisely, let $\{\delta_k\}_{k=1}^{CR^d}$ be an i.i.d. collection of selector random variables where $\mathbb{P}(\delta_k = 1) := \delta = O(\frac{1}{R})$ and $\mathbb{P}(\delta_k = 0) = 1- \delta$, for each $1 \leq k \leq CR^d$. We set $w(x) := \sum_{k=1}^{CR^d} \delta_k \mathbbm{1}_{\alpha_k} (x)$. From here we see that $\mathbb{E} (\norm{w}_1) \sim \mathbb{E}(\sum_{k=1}^{CR^d}\delta_k) \sim R^{d-1}$, since each of the $\alpha_k$'s have measure $\sim 1$ and are finitely overlapping. 
        
        From the previous line, it is clear that with high probability the random weight $w$ will be such that $\norm{w}_1 \sim R^{d-1}$, so property 3 of Theorem \ref{maintheoreminformal} is satisfied. The fact that property $w$ satisfies property 2 of Theorem \ref{maintheoreminformal} is a consequence of a technical (but not too difficult to prove) large deviation bound for the sum of Bernoulli random variables which will be proved in Lemma \ref{bernoullilargedevbound} later in this paper.
        
        The main technical difficulty lies in establishing that property 1 holds with high probability. We show property 1 is true by establishing the following theorem:
        \begin{theorem}\label{maintechnicaltheorem}
            In the setting coming from the discussion above, the following holds:
            \begin{equation}
            \mathbb{E}\sup_{\norm{g}_{L^2(\Sigma)} \leq 1} \int\abs{Eg(x)}^2w(x)dx \lesssim R^{\epsilon}.
            \end{equation}
        \end{theorem}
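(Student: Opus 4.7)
The plan is to reformulate the left-hand side as an operator norm. Set $T_w := E^*M_w E$ on $L^2(\Sigma)$, where $M_w$ denotes multiplication by $w$; then $\sup_{\norm{g}_{L^2(\Sigma)}\leq 1}\int|Eg|^2 w = \|T_w\|_{op}$. Since $w=\sum_k\delta_k\mathbbm{1}_{\alpha_k}$, the operator splits as a sum of independent, positive, self-adjoint random operators
\begin{equation*}
T_w = \sum_{k=1}^{CR^d}\delta_k T_k,\qquad T_k := E^*\mathbbm{1}_{\alpha_k}E \succeq 0.
\end{equation*}
The natural tool is therefore a matrix Bernstein inequality applied to the centered sum $X := \sum_k(\delta_k-\delta)T_k$.

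First I would verify the quantitative inputs required by matrix Bernstein. The \emph{uniform bound} $\|T_k\|_{op} \lesssim 1$ follows from the trivial pointwise estimate $|Ef(x)|\leq \|f\|_{L^1(\Sigma)}\lesssim \|f\|_{L^2(\Sigma)}$ integrated over the unit ball $\alpha_k$. For the \emph{matrix variance}, since $T_k\succeq 0$ and $\|T_k\|_{op}\lesssim 1$, we get $T_k^2 \preceq C T_k$, hence
\begin{equation*}
\Bigl\|\sum_k T_k^2\Bigr\| \lesssim \Bigl\|\sum_k T_k\Bigr\| = \sup_{\|f\|_2\leq 1}\int_{B_R}|Ef|^2 \lesssim R,
\end{equation*}
where the last step is the unweighted local $L^2$ extension bound (Plancherel plus Schur's test exploiting that $\Sigma$ is $(d-1)$-dimensional). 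Multiplying by $\delta(1-\delta)\lesssim R^{-1}$ gives matrix variance $\|\sum_k\mathbb{E}((\delta_k-\delta)T_k)^2\|\lesssim 1$; the same bound yields $\|\mathbb{E}T_w\| = \delta\|T_W\| \lesssim 1$ for the mean.

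Next I would estimate the \emph{effective rank}. A direct Hilbert--Schmidt computation with the kernel $K_k(\omega,\omega')=\widehat{\mathbbm{1}_{\alpha_k}}(\omega-\omega')$ gives $\|T_k\|_{HS}^2 = \int_\Sigma\int_\Sigma|\widehat{\mathbbm{1}_{B_1}}(\omega-\omega')|^2\,d\sigma\,d\sigma \lesssim 1$ uniformly in $k$, hence $\mathrm{tr}(\sum_k\mathbb{E}((\delta_k-\delta)T_k)^2)\lesssim \delta R^d \lesssim R^{d-1}$. Dividing by the operator norm produces intrinsic dimension $\lesssim R^{d-1}$, so $\log(\mathrm{intdim})\lesssim \log R$. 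Tropp's intrinsic-dimension matrix Bernstein then yields
\begin{equation*}
\mathbb{E}\|X\|_{op} \lesssim \sqrt{1\cdot \log R} + 1\cdot \log R \lesssim \log R,
\end{equation*}
and combining with $\|\mathbb{E}T_w\|\lesssim 1$ gives the claimed bound $\mathbb{E}\|T_w\|_{op}\lesssim \log R \lesssim_\epsilon R^\epsilon$.

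The main obstacle is that $L^2(\Sigma)$ is infinite-dimensional, so one has to justify applying matrix Bernstein. The cleanest route is to invoke Tropp's intrinsic-dimension Bernstein directly in its Hilbert-space/trace-class form, which applies verbatim to compact self-adjoint operators and produces precisely the $\log(\mathrm{intdim})$ factor computed above. A more elementary alternative is to first truncate: partition $\Sigma$ into $\sim R^{d-1}$ caps of diameter $1/R$, project $T_w$ onto the corresponding finite-dimensional subspace of $L^2(\Sigma)$, and apply standard finite-dimensional matrix Bernstein with the same quantitative inputs. The truncation error is negligible by the uncertainty principle, since $Ef|_{B_R}$ is essentially determined by $f$ at resolution $1/R$; verifying this rigorously, together with the auxiliary Bernoulli large-deviation estimate for property (2) of the main theorem, is the only piece where care is required.
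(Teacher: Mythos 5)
Your proposal takes a genuinely different route from the paper, and the main line of it is correct. The paper proves Theorem~\ref{maintechnicaltheorem} via Proposition~\ref{technicalrefinedprop} using a Talagrand-style chaining argument: covering numbers of $\mathcal{B}$ in the seminorm $\norm{\cdot}_\sim$ via Maurey's empirical method, the Artstein--Milman--Szarek entropy duality, a Bennett-type concentration inequality, a clever separated-net construction relating concentration to entropy, and finally a Taylor-expansion/local-constancy step to upgrade averages $\abs{\int_{\alpha_k}Eg}^2$ to genuine $\int_{\alpha_k}\abs{Eg}^2$. You instead observe that $\sup_{\norm{g}_2\le1}\int\abs{Eg}^2w = \norm{E^*M_wE}_{op}$ and that $E^*M_wE=\sum_k\delta_k T_k$ is a sum of independent positive random operators, so operator Bernstein with intrinsic dimension applies directly; your quantitative inputs $\norm{T_k}\lesssim1$, $\norm{\sum_kT_k}\lesssim R$ (Agmon--H\"ormander), $\norm{T_k}_{HS}\lesssim1$ are all correct, and the resulting bound $\lesssim\log R$ is in fact sharper than the paper's $(\log R)^{O(1)}$. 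The operator-theoretic reformulation also sidesteps the averages-vs-squares upgrade entirely, which occupies the paper's whole Section~\ref{sectionprop9}. Two caveats. First, the intrinsic-dimension Bernstein inequality in the Hilbert-space/trace-class setting is due to Minsker rather than Tropp; and to bound $\mathrm{intdim}(V)=\mathrm{tr}(V)/\norm{V}$ you implicitly need a \emph{lower} bound on $\norm{V}$, not just the stated upper bound $\norm{V}\lesssim1$. This is easy ($\norm{V}\ge\delta(1-\delta)\norm{T_1}^2\gtrsim1/R$, so $\log\mathrm{intdim}\lesssim\log R$), but you should say it. Second, your proposed ``more elementary alternative'' has a real gap: the subspace of functions piecewise constant on $1/R$-caps is not an invariant subspace of $T_w$, and the uncertainty principle is not a sharp cutoff --- the kernel $\widehat{\mathbbm{1}_{B_R}}(\omega-\omega')$ has only polynomial decay, so the cross-term $\norm{T_w(I-P)}$ is not obviously super-polynomially small, and in fact bounding $PT_wP$ alone underestimates $\norm{T_w}$. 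The paper's own local-constancy argument avoids this by Taylor expanding to very high order on $R^{-\epsilon}$-balls (not $R^{-1}$-caps) with a quantified $O(R^{-10000d})$ remainder; a rigorous version of your truncation would need an analogous quantitative device. I would therefore present the Minsker route as the proof and drop the truncation sketch, or else flesh it out along those lines.
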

            Theorem \ref{maintechnicaltheorem} shows that with high probability the weight $w$ satisfies the inequality
            \begin{equation*}
            \sup_{\norm{g}_{L^2(\Sigma)} \leq 1} \int\abs{Eg(x)}^2w(x)dx \lesssim R^{\epsilon},
            \end{equation*}
            which in turn implies (by rescaling) that 
                $\int_{B_R}\abs{Eg(x)}^2 w(x) dx \lesssim_{\epsilon,d} R^{\epsilon}\int_{\Sigma}\abs{g(\omega)}^2$, which proves property 3.  By unwinding the definition of $w$, note that Theorem \ref{maintechnicaltheorem} is equivalent to the following proposition (which we will refer to the most along the way to proving Theorem \ref{maintechnicaltheorem}):
                \begin{proposition}\label{technicalrefinedprop}
                    Let $\{\alpha_k\}_{k=1}^{CR^d}$ be the finitely overlapping collection of unit balls covering $B_R$ as defined above, and let $\{\delta_k\}_{k=1}^{CR^d}$ be the corresponding i.i.d. collection of selector random variables with $\mathbb{P}(\delta_k =1):= \delta =O(\frac{1}{R})$, then we have
                    \begin{equation}\label{maintechnicalpropeq}
                        \mathbb{E}\sup_{\norm{g}_{L^2(\Sigma) \leq 1}} \sum_{k=1}^{CR^d}\delta_k \int_{\alpha_k}\abs{Eg(x)}^2dx \lesssim_{\epsilon, d} R^{\epsilon}.
                    \end{equation}
                \end{proposition}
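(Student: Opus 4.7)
The plan is to recognize the left-hand side of \eqref{maintechnicalpropeq} as the expected operator norm of a random sum of positive self-adjoint operators on $L^{2}(\Sigma)$, and then to apply a matrix Bernstein/Chernoff inequality in its intrinsic-dimension form. For each $k$, introduce the positive operator
\begin{equation*}
T_{k} \coloneqq E^{*} M_{\mathbbm{1}_{\alpha_{k}}} E \quad \text{on } L^{2}(\Sigma), \qquad \inp{g}{T_{k} g} = \int_{\alpha_{k}} \abs{Eg(x)}^{2}\, dx.
\end{equation*}
Then $T_{w} \coloneqq \sum_{k} \delta_{k} T_{k} = E^{*} M_{w} E$, and the supremum in \eqref{maintechnicalpropeq} equals $\lambda_{\max}(T_{w})$, so the proposition reduces to showing $\mathbb{E}\,\lambda_{\max}(T_{w}) \lesssim_{\epsilon,d} R^{\epsilon}$.

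Next I would record three deterministic inputs needed by matrix Bernstein. First, $\norm{T_{k}}_{\mathrm{op}} \lesssim 1$ uniformly in $k$: the pointwise bound $\abs{Eg(x)} \le \norm{g}_{L^{1}(\Sigma)} \lesssim \norm{g}_{L^{2}(\Sigma)}$ combined with $\abs{\alpha_{k}} \lesssim 1$ yields $\inp{g}{T_{k} g} \lesssim \norm{g}_{L^{2}(\Sigma)}^{2}$. Second, $\norm{\mathbb{E} T_{w}}_{\mathrm{op}} = \delta \norm{\sum_{k} T_{k}}_{\mathrm{op}} \lesssim \delta R \lesssim 1$, since the $\alpha_{k}$ finitely overlap and cover $B_{R}$, so the Agmon--H\"ormander local energy estimate $\int_{B_{R}} \abs{Eg}^{2}\, dx \lesssim R \norm{g}_{L^{2}(\Sigma)}^{2}$ gives this bound, relying crucially on the choice $\delta = O(R^{-1})$. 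Third, the variance proxy $V \coloneqq \sum_{k} \mathbb{E}[(\delta_{k} - \delta)^{2} T_{k}^{2}] = \delta(1-\delta) \sum_{k} T_{k}^{2}$ satisfies, via the operator inequality $T_{k}^{2} \leq \norm{T_{k}}_{\mathrm{op}} T_{k}$,
\begin{equation*}
\norm{V}_{\mathrm{op}} \leq \delta\,\max_{k}\norm{T_{k}}_{\mathrm{op}} \,\Bigl\|\sum_{k} T_{k}\Bigr\|_{\mathrm{op}} \lesssim \delta R \lesssim 1,
\end{equation*}
while $\mathrm{tr}(V) \lesssim \delta \sum_{k} \norm{T_{k}}_{\mathrm{HS}}^{2} \lesssim \delta R^{d} \lesssim R^{d-1}$, where $\norm{T_{k}}_{\mathrm{HS}} \lesssim 1$ is read off from the explicit kernel $K_{k}(\omega_{1}, \omega_{2}) = \widehat{\mathbbm{1}_{\alpha_{k}}}(\omega_{1} - \omega_{2})$ using compactness of $\Sigma$. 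The intrinsic dimension of the variance is therefore $\mathrm{intdim}(V) \coloneqq \mathrm{tr}(V)/\norm{V}_{\mathrm{op}} \lesssim R^{d-1}$.

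Feeding these inputs into the intrinsic-dimension matrix Bernstein inequality (Tropp's framework, or its Hilbert-space adaptation) then gives
\begin{equation*}
\mathbb{E} \norm{T_{w} - \mathbb{E} T_{w}}_{\mathrm{op}} \lesssim \sqrt{\norm{V}_{\mathrm{op}} \log \mathrm{intdim}(V)} + \max_{k} \norm{T_{k}}_{\mathrm{op}} \log \mathrm{intdim}(V) \lesssim \log R,
\end{equation*}
which combined with $\norm{\mathbb{E} T_{w}}_{\mathrm{op}} \lesssim 1$ closes the argument: $\mathbb{E}\,\lambda_{\max}(T_{w}) \lesssim 1 + \log R \lesssim_{\epsilon} R^{\epsilon}$. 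The main technical obstacle I anticipate is the rigorous passage from an infinite-dimensional operator statement to one amenable to matrix Bernstein, either by invoking a Hilbert-space intrinsic-dimension version directly, or by first truncating $L^{2}(\Sigma)$ to a wave-packet subspace at scale $R^{-1/2}$ of dimension $\sim R^{d-1}$ and running the classical matrix Bernstein bound with $\log d \lesssim \log R$. The delicate balance is that $\delta \sim R^{-1}$ is chosen precisely so that both $\norm{\mathbb{E} T_{w}}_{\mathrm{op}}$ and $\norm{V}_{\mathrm{op}}$ are $O(1)$, so that the only loss is logarithmic in the intrinsic dimension and produces the target $R^{\epsilon}$ bound.
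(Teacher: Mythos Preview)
Your operator-theoretic approach is correct and takes a genuinely different route from the paper. You recast the supremum as $\lambda_{\max}(E^{*}M_{w}E)$ and feed the centered summands $(\delta_{k}-\delta)T_{k}$ into an intrinsic-dimension matrix/operator Bernstein inequality; the three deterministic inputs you isolate ($\norm{T_{k}}_{\mathrm{op}}\lesssim 1$, Agmon--H\"ormander giving $\norm{\sum_{k}T_{k}}_{\mathrm{op}}\lesssim R$, and $\mathrm{tr}(V)\lesssim R^{d-1}$) are exactly what that inequality needs, and the choice $\delta\sim R^{-1}$ is precisely what makes both the mean and variance $O(1)$. One small gap: bounding $\mathrm{intdim}(V)=\mathrm{tr}(V)/\norm{V}_{\mathrm{op}}$ from above requires a \emph{lower} bound on $\norm{V}_{\mathrm{op}}$, which you do not supply; the easy estimate $\norm{V}_{\mathrm{op}}\gtrsim\delta\norm{T_{1}}_{\mathrm{op}}^{2}\gtrsim R^{-1}$ (take $g$ a modulated constant so that $\abs{Eg}\sim 1$ on $\alpha_{1}$) gives $\mathrm{intdim}(V)\lesssim R^{d}$, and $\log R^{d}\sim\log R$ is still acceptable, so this is harmless. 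The paper instead follows Talagrand's treatment of Bourgain's $\Lambda(p)$ problem: it first proves the weaker bound with $\abs{\int_{\alpha_{k}}Eg}^{2}$ in place of $\int_{\alpha_{k}}\abs{Eg}^{2}$, controlling covering numbers of the $L^{2}(\Sigma)$ ball in the seminorm $\max_{k}\abs{\int_{\alpha_{k}}\tilde{E}g}$ via Maurey's empirical method combined with the Artstein--Milman--Szarek entropy-duality theorem, and then runs a hand-built chaining argument driven by Bennett's inequality; the upgrade to $\int_{\alpha_{k}}\abs{Eg}^{2}$ is achieved by a Taylor-expansion local-constancy argument at scale $R^{-\epsilon}$ and a re-run of the weak estimate for $O_{\epsilon,d}(1)$ auxiliary operators $Eg_{i_{1},\dots,i_{l}}$. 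Your argument is shorter and more structural, handles $\int_{\alpha_{k}}\abs{Eg}^{2}$ directly without the two-step detour, and makes transparent that Agmon--H\"ormander is the only restriction-theoretic input required; the paper's argument is more self-contained (no black-box operator concentration), and its chaining/entropy framework stays closer to the random-Fourier-series literature it is modeled on.
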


\subsection{Suprema of Stochastic Processes:}

The key inspiration behind the proof of Proposition \ref{technicalrefinedprop} is to use ideas from the theory of bounding the expected supremum of stochastic processes. We can reframe (\ref{maintechnicalpropeq}) more abstractly as a question that asks for tight bounds for the quantity
\begin{equation}\label{expectedsup}
    \mathbb{E}\sup_{t \in T} X_t,
\end{equation}
for some stochastic process $(X_t)_{t \in T}$. There is an incredibly rich and interesting theory surrounding this family of problems and we refer the reader to Talagrand's book \cite{talagrand2014upper} and the references therein for a comprehensive introduction. The study of the expected suprema of Gaussian processes is especially comprehensive; when $(X_t)_{t \in T}$ is a Gaussian process it is known that the quantity (\ref{expectedsup}) is completely determined by the metric induced by the $L^2$ distance between random variables on the index set $T$, given by $d(s,t) = (\mathbb{E}(X_t-X_s)^2)^{1/2}$ for $s,t \in T$. A powerful strategy to used bound the expected supremum of a Gaussian process is called the \textit{chaining} method. An informal description of chaining for Gaussian processes is as follows: assuming the process is centered\footnote{That is, $\mathbb{E}X_{t} =0$ for all $t \in T$.} we can write
\begin{equation*}
    \mathbb{E}\sup_{t \in T}X_t = \mathbb{E}\sup_{t \in T} (X_t-X_{t_0}),
\end{equation*}
and then write $(X_t-X_{t_0}) = \sum_{k} (X_{\pi_{k}(t)}-X_{\pi_{k-1}(t)}) $ as some telescoping sum, where the $\pi_{k}(t)$'s can be thought of as successive approximations to $t$ in the metric space $(T,d)$. One can then use measure concentration bounds\footnote{In the case of a Gaussian process, one usually uses the Gaussian tail bound: $\mathbb{P}(\abs{X_s-X_t}> \lambda) \leq 2 \exp\left(\frac{-\lambda^2}{2d(s,t)^2}\right)$.} to control the size of the increments $X_{\pi_{k}(t)}- X_{\pi_{k-1}(t)}.$  A celebrated application of the chaining method is \textit{Dudley's inequality} which states in this setting that
\begin{equation*}
    \mathbb{E} \sup_{t \in T}X_t \lesssim \int_{0}^{\infty} \sqrt{\log N(T,d, \epsilon)} d \epsilon,
\end{equation*}
where $N(T,d, \epsilon)$ is the $\epsilon$ - \textit{covering number} of $T$, that is, the minimum number of balls of radius $\epsilon$ in the metric space $(T,d)$ required to cover $T$. We refer the reader to the references \cite{talagrand2014upper} and \cite{vershynin2018high} for excellent introductions to the chaining method and applications. Our method for proving Proposition \ref{technicalrefinedprop} comes from the chaining method, along with some basic tools from restriction theory and is inspired by the important works \cite{bourgain1989bounded}, \cite{Talagrand1992}, and \cite{talagrand1994majorizing}. We summarize some of the ideas in these works and outline how they will be used to prove Proposition \ref{technicalrefinedprop}.

\begin{remark}
    The astute reader may have realized that the quantity
    \begin{equation*}
        \mathbb{E} \sup_{t \in T}X_t
    \end{equation*}
    may not be well defined if $T$ is uncountable (since the random variable $\sup_{t \in T}X_t$ may not even be measurable). The fix for this is to alter the definition of the expected supremum of a stochastic process so that all the previous arguments make sense by defining
    \begin{equation*}
        \mathbb{E}\sup_{t \in T}X_t \coloneqq \sup_{F \subseteq T, \text{ finite}} \mathbb{E}\sup_{t \in F}X_t.
    \end{equation*}
    We will proceed with this convention for this paper as well.
\end{remark}

\subsection{Random Fourier Series and Ideas Behind the proof}

In the celebrated paper \cite{bourgain1989bounded} Bourgain managed to prove sharp $L^p$ bounds for a certain type of random Fourier series. In particular, Bourgain showed that a generic random subset $J$ of the discrete interval $\{1,2,...,n\}$ of size $n^{2/p}$ satisfies the inequality 
\begin{equation}\label{lambdap}
    \norm{\sum_{k\in J} a_k e^{ikx}}_{L^{p}[0,2\pi]} \lesssim \left(\sum_{k \in J}\abs{a_k}^2\right)^{\frac{1}{2}},
\end{equation}
where the implicit constant in the above inequality is an absolute and does not depend on $n$. Subsequently, Talagrand \cite{Talagrand1992} offered another proof of this theorem. While both proofs significantly differ in methodology, they both invoke ideas coming from the chaining method and the (metric) geometry of stochastic processes. Bourgain uses a variant of Dudley's inequality, whereas Talagrand uses refined ideas coming from his \textit{majorizing measures theorem} - which yields precise bounds for the expected suprema of Gaussian processes\footnote{Although modifications to arguments coming from the majorizing measure theorem are needed, since the relevant stochastic process in this case is not a Gaussian process. Many of ideas to generalize the scheme coming from the majorizing measure theorem to Bernoulli processes was first developed in \cite{talagrand1994majorizing}.}. 

We follow the method developed by Talagrand in \cite{Talagrand1992} (with appropriate modifications). In particular, Talagrand observes that in order to show (\ref{lambdap}) it is equivalent to showing that
\begin{equation}\label{tallambdap}
    \mathbb{E}\sup_{\norm{f}_{{p'}} \leq 1}\sum_{k=1}^{n} (\tilde{\delta_k}-\tilde{\delta}) \abs{\hat{f}(k)}^2 \lesssim 1,
\end{equation}
where $\{\tilde{\delta_k}\}_{k=1}^{n}$ is an i.i.d. collection of selector random variables with $\mathbb{P}(\tilde{\delta_k}=1):=\tilde{\delta}= n^{2/p-1}$. We can compare this to what Proposition \ref{technicalrefinedprop} which we recall is the statement that
\begin{equation}\label{techref}
    \mathbb{E}\sup_{\norm{g}_{L^2(\Sigma) \leq 1}} \sum_{k=1}^{CR^d}\delta_k \int_{\alpha_k}\abs{Eg(x)}^2dx \lesssim_{\epsilon, d} R^{\epsilon}.
\end{equation}
Both (\ref{tallambdap}) and (\ref{techref}) have a similar structure, and we adopt the method of \cite{Talagrand1992}. Some improvements and modifications are needed, which we will describe along the way.

In order to prove Proposition \ref{technicalrefinedprop}, in the spirit of the chaining method, we need to understand the geometry of the metric space $\mathcal{B} := \{\ g \in L^2(\Sigma): \norm{g}_{L^2(\Sigma)} \leq 1 \}$ with respect to an appropriate metric $d(\cdot,\cdot)$. This is done by gaining control of the covering numbers $N(\mathcal{B}, d, \epsilon)$ using the empirical method of Maurey (see \cite[Chapter 0]{vershynin2018high}) and a duality result\footnote{We are able to make use of this result to make our work easier compared to what was done in \cite{Talagrand1992}.} from \cite{Artstein2004}. Following this, we use a chaining-type method which closely resembles the method in \cite{Talagrand1992} by relating measure concentration bounds to covering numbers (similar to how one does in the proof of Dudley's inequality). These steps only allow us to prove a slightly weaker result than Proposition \ref{technicalrefinedprop}. We first show the following weaker version of Proposition \ref{technicalrefinedprop}:
\begin{equation*}
    \mathbb{E}\sup_{\norm{g}_{L^2(\Sigma) \leq 1}} \sum_{k=1}^{CR^d}\delta_k \abs{\int_{\alpha_k}Eg(x)dx}^2 \lesssim_{\epsilon, d} R^{\epsilon},
\end{equation*}
and in order to upgrade this, we use some tools from restriction theory (namely, the locally constant property for $Eg$ to rerun the entire argument) to complete the proof of Proposition \ref{technicalrefinedprop}.

\subsection{Structure and Organization of the Paper:} 
\begin{itemize}
    \item In section \ref{sectionweakproof}, we prove a statement that is weaker than Proposition \ref{technicalrefinedprop}. Many of the steps here follow the scheme in \cite{Talagrand1992}, with some appropriate modifications.
    \item In section \ref{sectionprop9} we use local constancy estimates and a modification of the proof in the previous section to prove Proposition \ref{technicalrefinedprop}.
    \item In section \ref{probestimatesection} we establish some probabilistic estimates which will complete the proof of Theorem \ref{maintheoreminformal}. We also do a comparison with a random weight model proposed by Carbery in \cite{carbery2009large}, and show that our results apply to his model as well.
    \item The appendix contains proofs of some lemmas in probability and Fourier analysis. Some of these lemmas are standard and in some cases well known, but we include them for the sake of completeness and for the reader that is not so familiar with one of these fields. We have tried to make the exposition of this article as accessible as possible. 
\end{itemize}

\textbf{Notation:} We will use the following notation in this paper:
\begin{itemize}
    \item $A \lesssim B$ if there exists an absolute constant $C>0$ such that $A \leq C B$.
    \item $A \sim B$ if there exists an absolute constant such that $\frac{1}{C}B \leq A \leq CB$.
    \item $A \lesssim_{\epsilon} B$ if for every $\epsilon >0 $ there exists a constant $C_{\epsilon}>0$ (possibly depending on $\epsilon$) such that $A \leq C_{\epsilon} B$ for all $\epsilon >0$. For example, $\log R \lesssim_{\epsilon} R^{\epsilon}$ for all $R>0$.
    \item Given any any measure space $(X, \mu)$, and for $1 < p< \infty$ we define the $L^p(X)$ to be its usual definition (i.e. complex valued measurable functions with finite $p^{th}$ moment). We similarly define $L^{p}(X, \mathbb{R})$ to be the the set of \textit{real} valued measurable functions with finite ${p}^{th}$ moment. We often need to deal with functions in $L^{p}(X, \mathbb{R})$ for some suitable measure space $X$. For $g \in L^{p}(X, \mathbb{R})$, we define $\norm{g}_{L^p(X, \mathbb{R})} = (\int \abs{g}^{p} d\mu )^{1/p}$.
    \item Let $w: \mathbb{R^d} \rightarrow [0, \infty)$, be a positive \textit{weight} function. We define $\norm{f}_{L^p(w)} = (\int_{\mathbb{R}^d}{\abs{f(x)}^pw(x)}dx)^{1/p}$.
    \item Given any $A, B \subseteq \mathbb{R}^n$, define $\mathcal{N}(A, B)$ to be the minimum number of translates of $B$ needed to cover $A$. If $\norm{\cdot}$ is a norm on $\mathbb{R}^n$, we write $\mathcal{N}(A, \norm{\cdot}, \epsilon)$ to be the minimum number of ball of radius $\epsilon$ in $\norm{\cdot}$ needed to cover $A$.  Covering numbers of the form $\mathcal{N}(A,B)$ are a basic tool in the theory of Gaussian processes and will be used frequently in this work.
    \item $\mathbb{T}$ will denote the collection of all 1-tubes (1- neighborhoods of lines) in $\R^d$
    \item When the context is clear, we let $\Sigma$ be be a compact hypersurface with $\Sigma \subseteq \R^d$. By scaling and translating, we may assume that $\Sigma \subseteq \{x \in \R^d: \abs{x} \leq 1 \}$.
    \item $\mathcal{B}$ will always denote the unit ball in $L^2(\Sigma)$, that is $\mathcal{B} := \{g: \norm{g}_{L^2(\Sigma)} \leq 1 \}$, however, depending on the context, the functions in $\mathcal{B}$ may take real or complex values. In section 2, we will denote $\mathcal{B} \coloneqq \{g: \Sigma \rightarrow \R: \norm{g}_{L^2(\Sigma)} \leq 1\}$, whereas in Section 3, $\mathcal{B} \coloneqq \{g: \Sigma \rightarrow \C: \norm{g}_{L^2(\Sigma)} \leq 1\}$. This slight abuse of notation is done for the ease of presentation. 
\end{itemize}
\textbf{Acknowledgments:} The author would like to thank Hong Wang and Terence Tao for their encouragement, patience, and invaluable guidance throughout the course of this project. We would also like to thank Arian Nadjimzadah for many fruitful discussions on restriction theory, and for his continued encouragement and interest in this work. 
\section{Proof of Weaker Version of Proposition \ref{technicalrefinedprop}}\label{sectionweakproof}
Let $(\alpha_k)_{k=1}^{CR^d}$ be a finitely overlapping cover of unit balls that cover $B_R$ where $C$ is an absolute constant. We shall show the following result.
\begin{proposition}\label{mainprop} Let $\delta =O(\frac{1}{R})$. For each $1 \leq k \leq n$, Let $(\delta_k)_{k=1}^{n}$ be i.i.d. Bernoulli random variables with $\mathbb{P}(\delta_k=1) = \delta$, then
   \begin{equation}\label{cos}
       \mathbb{E} \sup_{\norm{g}_{{L^2}(\Sigma)} \leq 1} \sum_{k=1}^{R^d} \delta_k \abs{\int_{\alpha_k} Eg(x) dx }^2 \lesssim (\log R)^{O(1)}
   \end{equation}
\end{proposition}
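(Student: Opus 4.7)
The plan is to reinterpret the sum as the expected supremum of a centered Bernoulli quadratic process indexed by $\mathcal{B}$, and then apply a two-metric chaining argument in the style of Talagrand's proof of the Bourgain $\Lambda(p)$ theorem \cite{Talagrand1992}. First, introduce the linear functionals $\phi_g(k) := \int_{\alpha_k} Eg(x)\, dx = \inp{g}{h_k}$, where $h_k(\omega) := \int_{\alpha_k} e^{-2\pi i \omega \cdot x}\, dx$ satisfies $\norm{h_k}_{L^\infty(\Sigma)} \lesssim 1$, so $|\phi_g(k)| \lesssim \norm{g}_{L^2(\Sigma)}$. Splitting
\[
\sum_k \delta_k |\phi_g(k)|^2 \;=\; \delta\sum_k |\phi_g(k)|^2 \;+\; \sum_k(\delta_k-\delta)|\phi_g(k)|^2 \;=:\; M(g) + Z_g,
\]
the mean term is harmless: the standard $L^2$-extension estimate $\sum_k |\phi_g(k)|^2 \lesssim \int_{B_R}|Eg|^2 \lesssim R\norm{g}_{L^2(\Sigma)}^2$ combined with $\delta = O(1/R)$ yields $\sup_{g\in\mathcal{B}} M(g) \lesssim 1$. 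It thus suffices to bound $\mathbb{E}\sup_{g\in\mathcal{B}}|Z_g|$.

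Next I control the increments $Z_g - Z_{g'}$ by Bernstein's inequality. Using the factorization $|\phi_g(k)|^2-|\phi_{g'}(k)|^2 = \phi_{g-g'}(k)\overline{(\phi_g(k)+\phi_{g'}(k))}$ together with $|\phi_g(k)+\phi_{g'}(k)| \lesssim 1$, the coefficients $b_k := |\phi_g(k)|^2-|\phi_{g'}(k)|^2$ satisfy $\sup_k|b_k| \lesssim d_\infty(g,g')$ and $\delta\sum_k|b_k|^2 \lesssim \delta R \norm{g-g'}_{L^2(\Sigma)}^2 \lesssim d_2(g,g')^2$, where
\[
d_2(g,g') := \norm{g-g'}_{L^2(\Sigma)}, \qquad d_\infty(g,g') := \sup_k|\phi_{g-g'}(k)|.
\]
Bernstein then yields the mixed-tail estimate $\mathbb{P}(|Z_g-Z_{g'}|>t) \lesssim \exp(-c\min(t^2/d_2(g,g')^2,\, t/d_\infty(g,g')))$, exhibiting sub-Gaussian behavior at the $d_2$-scale and sub-exponential behavior at the $d_\infty$-scale.

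A two-metric chaining argument, analogous to the generic chaining for Bernoulli processes, then produces
\[
\mathbb{E}\sup_{g\in\mathcal{B}}|Z_g| \;\lesssim\; \int_0^{D_2}\sqrt{\log N(\mathcal{B}, d_2, \epsilon)}\, d\epsilon \;+\; \int_0^{D_\infty}\log N(\mathcal{B}, d_\infty, \epsilon)\, d\epsilon,
\]
with both diameters $O(1)$. The $d_2$-piece is the less delicate of the two: restricting to the finite-rank image of $g\mapsto(\phi_g(k))_k$ gives a convex body in $\C^{CR^d}$ whose singular-value profile is dictated by the $L^2$-extension estimate, and its contribution is at most polylogarithmic.

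The crux, and the place where I expect the main difficulty, is the $d_\infty$-covering number $N(\mathcal{B}, d_\infty, \epsilon)$. For this I use the empirical method of Maurey \cite[Chap.~0]{vershynin2018high}: express any $g\in\mathcal{B}$ as the expectation of a signed measure whose atoms lie in a bounded subset of $L^2(\Sigma)$, and average $N \sim \epsilon^{-2}\log R$ i.i.d.\ draws; a Hoeffding bound in $\ell^\infty$ then gives a $d_\infty$-approximation of $\phi_g$ to within $\epsilon$. Passing this through the duality of covering numbers of Artstein-Milman-Szarek \cite{Artstein2004} converts the Maurey estimate, which is naturally an entropy bound on the symmetric convex hull of $\{h_k\}$ in a dual metric, into the target bound $\log N(\mathcal{B}, d_\infty, \epsilon) \lesssim \epsilon^{-2}(\log R)^{O(1)}$. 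Integrating this against $d\epsilon$, after interpolating with the trivial volumetric bound for very small $\epsilon$, yields a polylogarithmic contribution and closes the argument. The duality step is essential: a direct dimension count in the ambient $\C^{CR^d}$ would produce a polynomial-in-$R$ loss that is fatal to the desired conclusion.
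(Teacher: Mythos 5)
Your identification of the key structural ingredients is correct and matches the paper: the linearization via $\phi_g(k) = \langle g, h_k\rangle$, the use of the Agmon--H\"ormander trace inequality to control $\sum_k|\phi_g(k)|^2 \lesssim R\norm{g}_{L^2(\Sigma)}^2$, Maurey's empirical method to bound entropy numbers of the convex hull of $\{h_k\}$, and the Artstein--Milman--Szarek duality to convert this into the estimate $\log N(\mathcal{B}, d_\infty, \epsilon) \lesssim \epsilon^{-2}\log n$ are all exactly the tools the paper uses. However, the final chaining step contains a genuine gap. You propose to bound $\mathbb{E}\sup_g |Z_g|$ by the Dudley-type entropy integral $\int_0^{D_\infty}\log N(\mathcal{B}, d_\infty, \epsilon)\,d\epsilon$, but with $\log N(\mathcal{B}, d_\infty, \epsilon)\lesssim \epsilon^{-2}\log n$ this integral diverges at $\epsilon\to 0$. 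Truncating against the trivial volumetric bound $\log N \lesssim n\log(1/\epsilon)$ (where $n\sim R^d$ is the effective dimension) does not save you: the crossover scale is $\epsilon_0\sim\sqrt{\log n/n}$, and the integral $\int_0^{\epsilon_0} n\log(1/\epsilon)\,d\epsilon + \int_{\epsilon_0}^{D_\infty}\epsilon^{-2}\log n\,d\epsilon \sim \sqrt{n\log n}\sim R^{d/2}(\log R)^{O(1)}$, a polynomial loss that destroys the conclusion. This is not an accident of bookkeeping: it is the well-known reason why the Bourgain $\Lambda(p)$ theorem cannot be proved by a straight Dudley bound, and why both Bourgain's and Talagrand's arguments are genuinely more refined.

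The paper follows Talagrand's remedy. Instead of chaining over all of $\mathcal{B}$ with a single entropy integral, it performs a dyadic decomposition in the magnitude of the summand: for each $k$ it introduces the truncated functionals $l_{j,k}(g)$ supported where $|\int_{\alpha_j}\tilde E g|^2\in[2^{-k}, 2^{-k+1}]$, and bounds $\mathbb{E}\sup_g \sum_j \delta_j l_{j,k}(g)\lesssim \log n$ for each fixed $k$. The point is that at level $k$ the process only sees the covering number at the single scale $\epsilon\sim 2^{-k/2}$, and the number of indices $j$ with $l_{j,k}(g)\neq 0$ is controlled by Chebyshev via the Agmon--H\"ormander bound: $\mathrm{card}\, I(g)\lesssim 2^k R$, so the Bennett tail exponent and the entropy $\log N(\mathcal{B},\norm\cdot_\sim, 2^{-k-1})\lesssim 2^{2k}\log n$ balance exactly. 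The mechanism that converts the tail bound into a bound on the entropy at the matched scale is not chaining but a probabilistic-method extraction argument (the construction of the $2^{-k-1}$-separated set $\{g_1,\ldots,g_{j_0(t)}\}$ and the inequality $j_0(t)\leq N(\mathcal{B},\norm\cdot_\sim, 2^{-k-1})$). Summing over the $O(\log n)$ relevant values of $k$ gives $(\log R)^{O(1)}$. To repair your argument you would need to replace the two Dudley integrals by this level-set decomposition and the accompanying combinatorial step; as written, the argument does not close.
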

For the ease of presentation we prove a slightly weaker result where our functions take values over $\R$ and not $\mathbb{C}$. We show that
\begin{equation}\label{complex}
    \mathbb{E} \sup_{\norm{g}_{L^2(\Sigma, \R) \leq 1}} \sum_{k=1}^{CR^d} \delta_k \abs{\int_{\alpha_k} \Tilde{E}g(x) dx }^2 \lesssim (\log R)^{O(1)}
\end{equation}
where $\Tilde{E}g(x) := \int_{\Sigma}g(\omega)\cos(2 \pi \omega \cdot x) d \sigma(\omega)$. 

By repeating the same argument with $\sin (2 \pi \omega \cdot x)$ instead of $\cos (2 \pi \omega \cdot x)$ will show that (\ref{cos}) implies (\ref{complex}). We will revisit this claim at the end of this section.

A key idea is to gain control of covering numbers of polytopes in Hilbert space. This goes back to work of Maurey, which is often referred to as the \textit{empirical method}. Maurey's method can be adapted to more general settings, and for our purpose we require the following version.
\begin{theorem}\label{maurey}
    Let $(\mathcal{H}, \norm{\cdot})$ be a Hilbert space (over $\R$). Consider $y_1, y_2,...y_n \in \mathcal{H}$ and suppose $\norm{y_k} \leq K$ for some absolute constant $K$. Let $\mathcal{C} = \{\sum_{k=1}^{n} \alpha_k y_k : \sum_{k=1}^{n} \abs{\alpha_k} \leq 1 \}$(that is, $\mathcal{C}$ is the balanced convex hull of $\{y_1,..., y_n\}$). Then $\log \mathcal{N}(\mathcal{C}, \norm{\cdot}, \epsilon) \lesssim \frac{\log n}{\epsilon^2}$.  
\end{theorem}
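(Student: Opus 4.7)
The plan is to use \emph{Maurey's empirical method}, which approximates points in a convex hull by empirical averages of a well-chosen random vector. The key idea is that for any $x$ in the balanced convex hull, one can build a random variable supported on $\{0, \pm y_1, \ldots, \pm y_n\}$ whose mean is exactly $x$, and then control the Hilbert-space $L^2$ error of the empirical mean using orthogonality.

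Fix $x \in \mathcal{C}$, so $x = \sum_{k=1}^{n} \alpha_k y_k$ with $\sum_{k=1}^n |\alpha_k| \leq 1$. Define a random vector $Z$ taking the value $\mathrm{sgn}(\alpha_k)\, y_k$ with probability $|\alpha_k|$ for each $k=1,\ldots,n$, and the value $0$ with the remaining probability $1 - \sum_k |\alpha_k|$. Then $\mathbb{E} Z = x$ and $\norm{Z} \leq K$ almost surely. Take $m$ i.i.d.\ copies $Z_1, \ldots, Z_m$ and form the empirical mean $S_m := \frac{1}{m}\sum_{i=1}^m Z_i$. Since $Z_i - x$ are independent, mean-zero, and live in a Hilbert space, their $L^2$-norms add in quadrature:
\[
\mathbb{E}\norm{S_m - x}^2 \;=\; \frac{1}{m^2}\sum_{i=1}^m \mathbb{E}\norm{Z_i - x}^2 \;\leq\; \frac{\mathbb{E}\norm{Z}^2}{m} \;\leq\; \frac{K^2}{m}.
\]
Therefore at least one realization $(z_1,\ldots,z_m) \in \{0,\pm y_1,\ldots,\pm y_n\}^m$ satisfies $\norm{\frac{1}{m}\sum_i z_i - x} \leq K/\sqrt{m}$. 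Since each $Z_i$ takes at most $2n+1$ distinct values, the set of possible empirical means $S_m$ has cardinality at most $(2n+1)^m$.

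Choosing $m := \lceil K^2/\epsilon^2 \rceil$ then produces an $\epsilon$-net for $\mathcal{C}$ of cardinality at most $(2n+1)^m$, and taking logarithms gives
\[
\log \mathcal{N}(\mathcal{C}, \norm{\cdot}, \epsilon) \;\leq\; m \log(2n+1) \;\lesssim\; \frac{\log n}{\epsilon^2},
\]
which is the claimed bound. There is no major obstacle in this scheme; the only genuine subtlety is the normalization step, where the $\ell^1$-constraint $\sum_k |\alpha_k| \leq 1$ is converted into an honest probability distribution by assigning the leftover mass $1 - \sum_k |\alpha_k|$ to the zero vector. The Hilbert-space hypothesis enters solely through the clean variance identity $\mathbb{E}\norm{S_m - x}^2 = \frac{1}{m}\mathbb{E}\norm{Z-x}^2$ from orthogonality of mean-zero independent summands; in a general Banach space one would need type/cotype constants and would recover only a weaker rate.
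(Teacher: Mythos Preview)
Your proof is correct and follows essentially the same route as the paper: both use Maurey's empirical method, representing $x$ as the mean of a random vector supported on a finite set, bounding the variance of the empirical average via Hilbert-space orthogonality, and counting the possible empirical means. Your treatment of signed coefficients (placing mass $|\alpha_k|$ on $\mathrm{sgn}(\alpha_k)\,y_k$) is in fact slightly more careful than the paper's, which writes $\mathbb{P}(Y=y_k)=\lambda_k$ without explicitly handling the signs.
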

\begin{proof}
    Fix $x \in \mathcal{C}$. We may write $x = \sum_{k=1}^{n} \lambda_k y_k$ with $\sum_{k=1}^{n}{\abs{\lambda_k}} \leq 1$. Define the ($\mathcal{H}$ valued) random variable $Y$ with the following distribution: 
    \begin{align*}
        \mathbb{P}(Y=y_k) &= \lambda_k \quad \text{for 
  } 1 \leq k \leq n \\
   \mathbb{P}(Y=0) &= 1 - \sum_{k=1}^{n} \abs{\lambda_k}
    \end{align*}
    Clearly, $\mathbb{E}(Y) = x$. Let $Y_1,..., Y_k$ be i.i.d. copies of $Y$. We now have the following estimate:
    \begin{align*}
        \mathbb{E}\norm{\frac{1}{k}\left(\sum_{j=1}^{k} Y_j \right) - x}^2 &= \frac{1}{k^2}\mathbb{E} \norm{\sum_{j=1}^{k} (Y_j - x)}^2\\
        &=\sum_{1 \leq i, j \leq n} \mathbb{E} \inp{(Y_i-x)}{(Y_j-x)}\\
        &= \frac{1}{k^2}\sum_{j=1}^{k}\mathbb{E}\norm{Y_j-x}^2\\
        &\lesssim \frac{1}{k} \quad \text{(since $\norm{Y_j- x}\leq 2K$.)}
    \end{align*}
    where the second line follows from the following fact that if $Z_1$ and $Z_2$ are independent $\mathcal{H}$-valued random variables with mean $0$, then $\mathbb{E}\inp{Z_1}{Z_2} = 0$. We refer the reader to Lemma \ref{indepexpectation} of the appendix.

    As a consequence of this estimate we note that there is a \textit{realization} of $(Y_1,..., Y_k)$ such that 
    \begin{equation*}
        \norm{\frac{1}{k}\left(\sum_{j=1}^{k} Y_j \right) - x} \lesssim \frac{1}{\sqrt{k}},
    \end{equation*}
    and thus for $k$ with $k \sim \lceil{\frac{1}{\epsilon^2}}\rceil$, then there is a realization of $(Y_1,...,Y_k)$ such that 
    \begin{equation}\label{epsilon_cover}
        \norm{\frac{1}{k}\left(\sum_{j=1}^{k} Y_j \right) - x} \leq \epsilon.
    \end{equation}
    For such $k$, let $\mathcal{F} \subset \mathcal{H}$ be the set 
    \begin{equation*}
        \left\{\frac{1}{k} \sum_{i=1}^{k} x_i : x_1,...,x_k \subset \{0,y_1,...,y_n\} \right\}.
    \end{equation*}
    (\ref{epsilon_cover}) shows that $\mathcal{F}$ is an $\epsilon$ - net for $\mathcal{C}$. Moreover, a simple counting argument gives us that $\abs{\mathcal{F}} \leq (n+1)^k$. Thus $\log \abs{\mathcal{F}} \leq k \log (n+1) \lesssim \frac{\log n}{\epsilon^2}$. Thus,
    \begin{equation*}
        \log \mathcal{N}(\mathcal{C}, \norm{\cdot}, \epsilon) \leq \log \abs{\mathcal{F}} \lesssim \frac{\log n}{\epsilon^2},
    \end{equation*}
    which proves the claim.
\end{proof}
Next we use a duality result. For optimal concentration of measure, we need control of covering numbers for a `dual problem'. 

Let $K \subseteq \R^n$ be a symmetric (i.e. $K=-K$) convex body. We define its polar body $K^{\circ}$ as follows:
\begin{equation*}
    K^{\circ}= \{y \in \R^n: \inp{x}{y} \leq 1, \forall x \in K\}
\end{equation*}

In our context, if $K$ is the unit ball for some norm, $K^{\circ}$ is the unit ball for its dual norm. In order to gain control of the size of the set $ \mathcal{B}:= \{g:\Sigma \rightarrow \mathbb{R} :\norm{g}_{L^2(\Sigma) } \leq 1\}$, we require some input from work on the `duality problem for entropy' in convex analysis. The duality problem asks given two symmetric bodies in $\mathbb{R}^n$ do we have that
\begin{equation}\label{entropydualityconj}
    \log N (K, \epsilon J) \sim \log N (J^{\circ}, \epsilon K^{\circ}) ?
\end{equation}
This problem is open in general (see, for example \cite{Bourgain1989}), but in the case where $K$ is the Euclidean unit ball $B_2$, the answer is affirmative. 
\begin{theorem}[\cite{Artstein2004}]\label{duality}
     Let $J$ be any symmetric convex body in $\mathbb{R}^{n}$, and $B_2$ the unit Euclidean ball. Then there exists an absolute constant $\alpha$ such that 
    \begin{equation*}
    \log \mathcal{N} (J, \epsilon B_2) \sim \log \mathcal{N} (B_2, \alpha \epsilon J^{\circ}).
    \end{equation*}
    \end{theorem}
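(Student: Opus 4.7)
My plan is to prove the equivalence by establishing each direction separately. Since $B_2$ is self-polar ($B_2^{\circ} = B_2$) and the bipolar theorem gives $J^{\circ\circ} = J$ for a closed symmetric convex body, the two inequalities are formally symmetric under the swap $(K, L) \mapsto (L^{\circ}, K^{\circ})$. Hence it suffices to show, say,
\[
\log \mathcal{N}(J, \epsilon B_2) \lesssim \log \mathcal{N}(B_2, \alpha \epsilon J^{\circ})
\]
for some absolute constant $\alpha > 0$, and then apply the same argument with roles swapped to obtain the reverse bound.

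First I would replace covering numbers by packing numbers (which agree up to an absolute factor): let $\{t_1, \ldots, t_M\} \subseteq J$ be a maximal $\epsilon$-separated subset in the Euclidean norm, so that $\log M$ is comparable to $\log \mathcal{N}(J, \epsilon B_2)$. The next step is a one-scale separation argument via Cauchy-Schwarz and polarity. For each pair $i \neq j$ the extremizer $y_{ij} := (t_i - t_j)/\|t_i - t_j\|_2$ lies in $B_2$ and satisfies $\langle y_{ij}, t_i - t_j\rangle > \epsilon$. Given a covering $\{z_1, \ldots, z_N\}$ of $B_2$ by translates of $\alpha \epsilon J^{\circ}$, pick $z_k$ with $y_{ij} - z_k \in \alpha \epsilon J^{\circ}$; by definition of the polar this means $\sup_{t \in J} |\langle y_{ij} - z_k, t\rangle| \leq \alpha \epsilon$. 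Choosing $\alpha$ small (say $\alpha = 1/4$), we conclude $\langle z_k, t_i - t_j\rangle > \epsilon/2$. So the $M$ points are distinguishable under the coordinate embedding $t \mapsto (\langle z_k, t\rangle)_{k=1}^N \in \ell_\infty^N$ at scale $\epsilon/2$.

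A naive volumetric bound on the image of this embedding yields only $\log M \lesssim N \log(\mathrm{diam}(J)/\epsilon)$, which is too weak by a whole factor of $N$. To upgrade to the sharp bound $\log M \lesssim \log N$, I would iterate the one-scale argument along a geometric sequence of scales in the spirit of the Artstein--Milman--Szarek multi-scale refinement: at each stage, refine the covering of $B_2$ at the next finer radius using the Hilbertian self-duality of $B_2$, organize the resulting sequence of refinements along a binary chaining tree indexed by the separated set, and absorb the arising log-losses telescopically into the total entropy budget. The main obstacle, in my view, is precisely this iteration: a single application of Cauchy-Schwarz/polarity is too lossy, and one must exploit both the self-duality and the rotational invariance of the Euclidean ball (features unavailable for a general symmetric convex body $K$) in order to save the logarithmic factor at every step. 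This structural use of $B_2$ is exactly why the Euclidean case is tractable while the general duality conjecture (\ref{entropydualityconj}) remains open.
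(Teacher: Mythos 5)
The paper does not prove this theorem at all: it is imported as a black box from Artstein--Milman--Szarek \cite{Artstein2004}, a deep result whose proof occupies a substantial paper of its own. Your opening moves are sound --- the reduction to one inequality via $B_2^{\circ}=B_2$ and bipolarity is correct, and the one-scale separation argument (pass to an $\epsilon$-packing $\{t_i\}$ of $J$, cover $B_2$ by translates $z_k$ of $\alpha\epsilon J^{\circ}$, use $\abs{\inp{y_{ij}-z_k}{t_i-t_j}} \leq 2\alpha\epsilon$ since $t_i-t_j\in 2J$, and conclude that the coordinate map $t\mapsto(\inp{z_k}{t})_k$ separates the packing at scale $\epsilon/2$) is a legitimate starting point and does give the weak bound $\log M \lesssim N\log(\mathrm{diam}(J)/\epsilon)$.

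The problem is that everything after ``to upgrade to the sharp bound $\log M \lesssim \log N$'' is not a proof. You name the obstacle correctly, but then replace the argument with a wish list: ``iterate along a geometric sequence of scales,'' ``organize along a binary chaining tree,'' ``absorb the log-losses telescopically.'' None of these steps is carried out, and none of them is obviously even possible from what you have set up --- in particular there is no mechanism in your sketch by which the $\log(\mathrm{diam}(J)/\epsilon)$ factor actually disappears rather than merely being redistributed across scales, and the ``binary chaining tree'' language belongs to majorizing-measure arguments rather than to the convex-geometric scheme AMS actually use. The genuine content of the theorem is precisely the bootstrap that eliminates the logarithmic loss, and that bootstrap relies on a nontrivial structural lemma about Euclidean balls (a regularity/averaging property enabling a self-improvement of the weak duality estimate), not on a routine dyadic telescoping. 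As written, your proposal proves the weak $N$-loss bound and then asserts that the hard part can be done; it cannot stand as a proof of Theorem~\ref{duality}, and for the purposes of this paper the correct move is to cite \cite{Artstein2004} rather than to attempt to reprove it.
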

    Note that $B_2^{\circ}= B_2$, so Theorem \ref{duality} is in fact a special case of (\ref{entropydualityconj}).

As mentioned previously, let $(\alpha_k)_{k=1}^{CR^d}$ be a collection of unit balls which are a finitely overlapping cover of $B_{R}$ in $\mathbb{R}^d$. For the ease of presentation, for the rest of this section, let $n = CR^d$. For each $1 \leq k \leq n$, define the functions $\Tilde{\alpha_k} \in \L^2(\Sigma, \R)$ by $\Tilde{\alpha_k}(\omega) = \int_{\alpha_k} \cos({2\pi \omega \cdot x}) dx$.  If $\mathcal{C}$ is the closed convex hull of of the $\Tilde{\alpha_k}$'s, i.e. $\mathcal{C} = \{\sum_{k=1}^{n} : \sum_{k=1}^{n} \lambda_k \Tilde{\alpha_k} : \sum_{k=1}^{n}\abs{\lambda_k} \leq 1 \}$, then we have by Theorem \ref{maurey} that $\log \mathcal{N}(\mathcal{C}, \norm{\cdot}_{L^2(\Sigma)}, \epsilon) \lesssim \frac{\log n}{\epsilon^2}$. Let $W$ be the subspace in $L^2(\Sigma)$ given by $\mathrm{span} \{ \tilde{\alpha_k} : 1 \leq k \leq n\}$. Since $W$ is a finite dimensional subspace of $L^2(\Sigma)$ of dimension at most $n$, it may be identified with $\R^N$ with for some $N \leq n$ where $W$ inherits the Euclidean structure of $W$ since $L^2(\Sigma)$ is a Hilbert space. Thus the $\Tilde{\alpha_k}'s$ may be identified with vectors in $\R^n$. Using the fact that $\norm{\Tilde{\alpha_k}}_{L^2(\Sigma)} \lesssim 1$, an application of Theorem \ref{duality} yields that that
\begin{equation}\label{dualestimate}
    \log \mathcal{N}(B_2, \epsilon \mathcal{C}^{\circ}) \lesssim \frac{\log n}{\epsilon^2},
\end{equation}
where $B_2$ is the Euclidean ball in $\R^N$. 
\begin{lemma}
    We have
    \begin{equation*}
        \mathcal{C}^{\circ} =\{x \in \R^N: \abs{\inp{x}{\Tilde{\alpha_k}}} \leq 1,  \quad \forall \quad 1 \leq k \leq n\}.
    \end{equation*}
    \begin{proof}
        Let $Y = \{x \in \R^N: \abs{\inp{x}{\Tilde{\alpha_k}}} \leq 1,  \quad \forall \quad 1 \leq k \leq n\}$. If $x \in \mathcal{C}^{\circ}$, then one must have that $\inp{x}{\Tilde{\alpha_k}}$ and $\inp{x}{-\Tilde{\alpha_k}} \leq 1$ for all $1 \leq k \leq n$ (since $\Tilde{\alpha_k}$, $-\Tilde{\alpha_k} \in \mathcal{C}$ for all $k$). Thus, $\abs{\inp{x}{\alpha_k}} \leq 1$ for all $k$, which implies $\mathcal{C}^{\circ} \subseteq Y$. On the other hand if $y \in Y$, and $x \in \mathcal{C}$ is arbitrary, then writing ${x}= \sum_{k=1}^{n}\lambda_k \Tilde{\alpha}_{k}$ for some $\lambda_k$'s for which $\sum_{k=1}^{n}\abs{\lambda_k} \leq 1$ gives that
        \begin{equation*}
            \inp{x}{y} \leq \abs{\inp{x}{y}} \leq \sum_{k=1}^{n} \abs{\lambda_k}\abs{\inp{\Tilde{\alpha_k}}{y}} \leq \sum_{k=1}^{n}\abs{\lambda_k} \leq 1.
        \end{equation*}
        Since $x \in \mathcal{C}$ was arbitrary, this shows that $Y \subseteq \mathcal{C}^{\circ}$, which proves the claim.
    \end{proof}
\end{lemma}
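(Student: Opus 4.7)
The plan is to establish the claimed set equality by verifying the two inclusions separately, both of which come down to unpacking definitions. The essential structural observation is that $\mathcal{C}$ is the \emph{balanced} (symmetric) convex hull of $\{\Tilde{\alpha_k}\}_{k=1}^n$: the constraint $\sum_k \abs{\lambda_k} \leq 1$ allows negative coefficients, so in particular $\pm \Tilde{\alpha_k} \in \mathcal{C}$ for every $k$ (take the coefficient vector with a single nonzero entry equal to $\pm 1$).

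For the forward inclusion, let $Y$ denote the right-hand side. Given $x \in \mathcal{C}^{\circ}$, I would apply the defining pairing bound $\inp{x}{z} \leq 1$ with $z = +\Tilde{\alpha_k}$ and with $z = -\Tilde{\alpha_k}$; combining the two one-sided inequalities yields $\abs{\inp{x}{\Tilde{\alpha_k}}} \leq 1$ for every $k$, which is exactly the condition cutting out $Y$.

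For the reverse inclusion, given $y \in Y$ and an arbitrary $x = \sum_k \lambda_k \Tilde{\alpha_k} \in \mathcal{C}$ (with $\sum_k \abs{\lambda_k} \leq 1$), bilinearity of the inner product followed by the triangle inequality gives
\[
\inp{x}{y} \leq \abs{\inp{x}{y}} \leq \sum_k \abs{\lambda_k}\,\abs{\inp{\Tilde{\alpha_k}}{y}} \leq \sum_k \abs{\lambda_k} \leq 1,
\]
so $y \in \mathcal{C}^{\circ}$. There is no real obstacle here: this is a short definition-chasing lemma whose purpose is to reinterpret the duality estimate \eqref{dualestimate} as a covering bound for the Euclidean ball by translates of the concrete polytope $\{x : \abs{\inp{x}{\Tilde{\alpha_k}}} \leq 1 \text{ for all } k\}$, in a form directly usable by the chaining argument to follow.
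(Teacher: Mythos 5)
Your proof is correct and follows essentially the same route as the paper: the forward inclusion by testing $x \in \mathcal{C}^{\circ}$ against $\pm\Tilde{\alpha_k}$, and the reverse inclusion by expanding $x \in \mathcal{C}$ as a balanced convex combination and applying the triangle inequality.
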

\begin{proposition}
    Define the seminorm $\norm{\cdot}_{\sim}$ on $L^2(\Sigma)$ given by $\norm{g}_{\sim} = \max_{1 \leq k \leq n} \abs{{\inp{g}{\Tilde{\alpha_k}}}}_{L^2(\Sigma)}$. Then
    \begin{equation*}
            \log \mathcal{N}(\mathcal{B}, \norm{\cdot}_{\sim}, \epsilon) \lesssim \frac{\log n}{\epsilon^2}.
    \end{equation*}
    \begin{proof}
        Write $L^2(\Sigma) = W \oplus W^{\perp}$. For any $f \in L^2(\Sigma)$, we may decompose $ f = f_{W} + f_{W^{\perp}}$, and observe that $\norm{f}_{\sim} = \norm{f_{W}}_{\sim}$. Thus the essential information needed to compute the covering number $\mathcal{N}(\mathcal{B}, \norm{\cdot}_{\sim}, \epsilon)$ is restricted to the subspace $W$ (which as we have noted may be identified with $\mathbb{R}^{N}$). If $B_2$ is the unit ball in $\mathbb{R}^n$, then it is clear that
        \begin{equation*}
            \log \mathcal{N}(\mathcal{B}, \norm{\cdot}_{\sim}, \epsilon) = \log \mathcal{N}(B_2, \epsilon \mathcal{C}^{\circ}) \lesssim \frac{\log n}{\epsilon^2}
        \end{equation*}
         by (\ref{dualestimate}).
    \end{proof}
    \begin{remark}
        Note that 
        \begin{equation*}
            \abs{\inp{g}{\Tilde{\alpha_k}}} = \abs{\int_{\Sigma}g(\omega) \Tilde{\alpha_k}(\omega)} = \abs{\int_{\Sigma}\int_{\alpha_k}g(\omega) \cos (2 \pi \omega \cdot x) dx} = \abs{\int_{\alpha_k}\Tilde{E}g(x)dx}.
        \end{equation*}
        by Fubini. Therefore $\norm{g}_{\sim} = \max _{1 \leq k \leq n} \abs{\int_{\alpha_k}\Tilde{E}g(x) dx}$.
    \end{remark}
\end{proposition}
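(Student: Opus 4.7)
The plan is to combine three ingredients: a reduction to the finite-dimensional subspace $W = \mathrm{span}\{\tilde\alpha_k : 1 \leq k \leq n\}$, Maurey's empirical method (Theorem \ref{maurey}) applied to the balanced convex hull $\mathcal{C}$, and the Artstein-Milman duality theorem (Theorem \ref{duality}) to pass from covering numbers of $\mathcal{C}$ to covering numbers of its polar $\mathcal{C}^\circ$.

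First I would observe that the seminorm $\norm{g}_\sim = \max_k \abs{\inp{g}{\tilde\alpha_k}}$ depends only on the orthogonal projection $P_W g$ of $g$ onto $W$, because $\inp{g}{\tilde\alpha_k} = \inp{P_W g}{\tilde\alpha_k}$ for every $k$. Since orthogonal projection is an $L^2$-contraction, $P_W(\mathcal{B})$ is contained in the Euclidean unit ball $B_2$ of $W$ (after identifying $W$ with $\R^N$ via an orthonormal basis, with $N = \dim W \leq n$). Consequently $\mathcal{N}(\mathcal{B}, \norm{\cdot}_\sim, \epsilon) \leq \mathcal{N}(B_2, \norm{\cdot}_\sim, \epsilon)$. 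The preceding lemma identifies the $\norm{\cdot}_\sim$-unit ball in $W$ with $\mathcal{C}^\circ$, so this last covering number is exactly $\mathcal{N}(B_2, \epsilon \mathcal{C}^\circ)$.

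Next I would apply Maurey's theorem to the symmetric convex body $\mathcal{C}$ in $(W, \norm{\cdot}_{L^2(\Sigma)})$. The generators satisfy $\norm{\tilde\alpha_k}_{L^2(\Sigma)} \lesssim 1$ uniformly in $k$ (each $\alpha_k$ has unit volume and $\Sigma$ has bounded surface measure), so Theorem \ref{maurey} yields $\log \mathcal{N}(\mathcal{C}, \norm{\cdot}_{L^2(\Sigma)}, \epsilon) \lesssim \log n/\epsilon^2$. Read inside $W \cong \R^N$ with its inherited Euclidean structure, this is precisely $\log \mathcal{N}(\mathcal{C}, \epsilon B_2) \lesssim \log n/\epsilon^2$.

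Finally, invoking Theorem \ref{duality} with $J = \mathcal{C}$ converts this into $\log \mathcal{N}(B_2, \alpha \epsilon \mathcal{C}^\circ) \lesssim \log n/\epsilon^2$, and a harmless rescaling of $\epsilon$ (absorbing the absolute constant $\alpha$ into the implicit constant) combined with the first step yields the claimed bound. The main conceptual point is that Maurey's empirical method naturally handles the convex hull $\mathcal{C}$, whereas what one actually needs is to cover the dual object $B_2$ by translates of $\mathcal{C}^\circ$; it is precisely Artstein-Milman entropy duality that bridges this gap, and this is the step where without such a tool one would have to redo the chaining argument from scratch. No step is technically delicate, but the chain of identifications between $W$, $\R^N$, $\mathcal{C}$, and $\mathcal{C}^\circ$ must be tracked carefully.
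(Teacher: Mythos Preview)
Your proposal is correct and follows essentially the same approach as the paper: reduce to the finite-dimensional subspace $W$ via orthogonal projection, identify the $\norm{\cdot}_\sim$-unit ball in $W$ with $\mathcal{C}^\circ$, and then invoke Maurey plus Artstein--Milman duality. The only cosmetic difference is that the paper packages the Maurey and duality steps into the single cited estimate (\ref{dualestimate}) established just before the proposition, whereas you unpack them explicitly; your version is arguably clearer, and your observation that $P_W(\mathcal{B}) \subseteq B_2$ (hence $\leq$ rather than the paper's $=$) is all that is needed.
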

\begin{proposition}[M--T Conjecture is true for $w(x) = \mathbbm{1}_{B_R}(x)$]\label{agmonhormander}
     We have that 
    \begin{equation*}\label{arian}
        \int_{B_R} \abs{Eg(x)}^2 dx \lesssim R \int_{\Sigma}\abs{g(\omega)}^2 d \sigma(\omega).
    \end{equation*}
    \begin{proof}
        This follows from the Agmon-H\"ormander trace inequality (see appendix).
    \end{proof}
\end{proposition}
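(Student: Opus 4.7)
The plan is to majorize the sharp cutoff $\mathbbm{1}_{B_R}$ by a smooth weight whose Fourier transform has support of radius $O(1/R)$, open up the resulting $L^2$ quantity via Plancherel, and conclude by Schur's test.

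First I would fix once and for all a non-negative even Schwartz function $\phi \colon \R^d \to \R$ satisfying $\phi \geq 1$ on the unit ball and $\hat{\phi}$ compactly supported; such a $\phi$ exists by a standard Paley--Wiener-type construction (e.g.\ take $\phi := c\abs{\check{\eta}}^2$ for a smooth compactly supported bump $\eta$ and a suitable normalizing constant $c$, after rescaling so that $\phi \geq 1$ on $B_1$). Setting $\phi_R(x) := \phi(x/R)$, one has $\phi_R \geq \mathbbm{1}_{B_R}$ pointwise, while $\widehat{\phi_R}(\xi) = R^d \hat{\phi}(R\xi)$ is supported in a ball $B_{c_0/R}(0)$ and satisfies $\norm{\widehat{\phi_R}}_{\infty} \lesssim R^d$.

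By Fubini,
\begin{equation*}
\int_{B_R}\abs{Eg(x)}^2\, dx \;\leq\; \int_{\R^d}\abs{Eg(x)}^2 \phi_R(x)\, dx \;=\; \int_{\Sigma}\int_{\Sigma} g(\omega) \overline{g(\omega')}\, \widehat{\phi_R}(\omega-\omega')\, d\sigma(\omega)\, d\sigma(\omega').
\end{equation*}
The right-hand side is a quadratic form in $g$ with symmetric kernel $K(\omega,\omega') := \widehat{\phi_R}(\omega-\omega')$. Since $\Sigma$ is a smooth compact hypersurface, its intersection with a ball of radius $1/R$ has surface measure $\lesssim R^{-(d-1)}$ uniformly in the center. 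Combined with the support and $L^{\infty}$ bounds on $\widehat{\phi_R}$, this gives
\begin{equation*}
\sup_{\omega \in \Sigma} \int_{\Sigma} \abs{K(\omega,\omega')}\, d\sigma(\omega') \;\lesssim\; R^d \cdot R^{-(d-1)} \;=\; R.
\end{equation*}
Schur's test then bounds the double integral by $R \int_{\Sigma} \abs{g(\omega)}^2 d\sigma(\omega)$, which is exactly the claimed inequality.

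The only ingredients requiring verification are the existence of the majorant $\phi$ (handled by Paley--Wiener as above) and the geometric slicing estimate $\sigma(\Sigma \cap B(\omega,1/R)) \lesssim R^{-(d-1)}$, which follows by approximating small pieces of $\Sigma$ by their tangent planes using smoothness and compactness. Neither is a genuine obstacle, so the main content of the argument really is the Plancherel expansion together with the Schur bound; alternatively, the whole chain can be viewed as a direct restatement of the Agmon--H\"ormander trace inequality.
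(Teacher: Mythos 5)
Your argument is correct, but it takes a genuinely different route from the paper's. The paper's appendix proof parameterizes $\Sigma$ locally as a graph $(\omega', \Sigma(\omega'))$, views $Eg(x', x_d)$ as a Fourier transform in $\omega'$ with $x_d$ as a parameter, applies Plancherel in the $x'$ variables for each fixed $x_d$, and then integrates in $x_d$ over $|x_d| \lesssim R$ to pick up the factor of $R$. You instead majorize $\mathbbm{1}_{B_R}$ by a dilated Schwartz function $\phi_R$ whose Fourier transform lives at scale $1/R$, open the weighted $L^2$ norm into a quadratic form with kernel $\widehat{\phi_R}(\omega - \omega')$, and run Schur's test using the slicing bound $\sigma(\Sigma \cap B(\omega, 1/R)) \lesssim R^{-(d-1)}$. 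Both proofs are correct and about equally short, but they isolate different structural inputs: the paper's method leans on the explicit graph parameterization and one-dimensional Plancherel in the direction normal to $\Sigma$, while yours makes visible that the only geometric input is the $(d-1)$-dimensional upper Ahlfors regularity of $\sigma$ (plus the Fourier-support majorant device), so it transfers more readily to rougher surfaces or measures. For the setting of this paper, where $\Sigma$ is smooth and compact, the two arguments are interchangeable.
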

A straightforward consequence of this is that if $g: \Sigma \rightarrow \R$, then 
    \begin{equation}\label{averagecontrol}
        \sum_{k=1}^{n} \abs{\int_{\alpha_k}\tilde{E}g(x) dx}^2 \lesssim \sum_{k=1}^{n} \int_{\alpha_k}\abs{Eg(x)}^2 dx \lesssim  \int_{B_R}\abs{Eg(x)}^2 dx \lesssim R\int_{\Sigma}\abs{g(\omega)}^2 d \sigma(\omega)
    \end{equation}
    Here the first inequality is an application of Cauchy-Schwarz (using the fact that each $\alpha_k$ has measure $\sim 1$), and the second inequality uses the fact that the $\alpha_k$'s have finite overlap and cover $B_R$, along with (\ref{arian}).

     Choose $0< \delta < 1$ such that $\delta = O(\frac{1}{R})$ so that $\sum_{k=1}^{n} \delta\abs{\int_{\alpha_k}\tilde{E}g(x) dx}^2 \leq \int_{\Sigma}\abs{g(\omega)}^2 d \sigma(\omega)$ for all $g \in L^2(\Sigma, d \sigma)$ and $R \geq 0$. Let $(\delta_i)_{1 \leq i \leq n}$ be i.i.d. selector random variables with $\mathbb{P}(\delta_i = 1)= \delta$. We now begin by establishing some measure concentration inequalities, which will eventually relate to the covering number bounds just established.
\begin{proposition}[A variant of Bennett's inequality \cite{Talagrand1992}, also see appendix]
    Consider a random variable $Z$ with $\abs{Z} \leq 1$ a.e, $\mathbb{E}Z =0$, and $E Z^2 \leq \delta$. Let $(Z_i)_{i=1}^{n}$ be a sequence of i.i.d. copies of $Z$, and let $a= (a_i)_{i=1}^{n}$ be a sequence of real numbers. Then for all $t>0$, we have the following large deviation inequality:
    \begin{equation}\label{Bennett}
        \mathbb{P}\left(\sum_{k=1}^{n}a_kZ_k \geq t \right) \leq \exp (- \frac{t}{\norm{a}_{\infty}} \log \frac{t\norm{a}_{\infty}}{\delta \norm{a}_2^2}).
    \end{equation}
\end{proposition}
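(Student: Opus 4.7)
The plan is the classical Chernoff--Cram\'er exponential-moment method, carried out in a Bennett-style way that takes advantage of the extra variance hypothesis $\mathbb{E} Z^2 \leq \delta$ rather than relying on boundedness alone. For any $\lambda > 0$, Markov's inequality applied to the nonnegative random variable $\exp(\lambda \sum_k a_k Z_k)$, combined with the independence of the $Z_k$'s, yields
\begin{equation*}
\mathbb{P}\!\left(\sum_{k=1}^{n} a_k Z_k \geq t\right) \;\leq\; e^{-\lambda t}\prod_{k=1}^{n}\mathbb{E}\!\left[e^{\lambda a_k Z_k}\right].
\end{equation*}
Everything then hinges on obtaining the right single-variable MGF bound and choosing $\lambda$ to balance the linear and exponential terms that result.

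For the MGF bound, set $M := \lambda\norm{a}_\infty$, so that $|\lambda a_k Z_k| \leq M$ almost surely. Using the elementary fact that $u \mapsto (e^u - 1 - u)/u^2$ is increasing (which follows from the termwise comparison $u^k \leq u^2 M^{k-2}$ in the Taylor series), one has
\begin{equation*}
e^u \;\leq\; 1 + u + u^2\,\frac{e^M - 1 - M}{M^2}\qquad (u \leq M).
\end{equation*}
Applying this with $u = \lambda a_k Z_k$, then using $\mathbb{E} Z_k = 0$, $\mathbb{E} Z_k^2 \leq \delta$, and $1+x \leq e^x$, one obtains
\begin{equation*}
\mathbb{E}\!\left[e^{\lambda a_k Z_k}\right] \;\leq\; \exp\!\left(\frac{a_k^2\,\delta}{\norm{a}_\infty^2}\bigl(e^{\lambda\norm{a}_\infty} - 1 - \lambda\norm{a}_\infty\bigr)\right).
\end{equation*}
Taking the product over $k$ collapses the coefficients into $\norm{a}_2^2$, producing the Bennett-type estimate
\begin{equation*}
\mathbb{P}\!\left(\sum_k a_k Z_k \geq t\right) \;\leq\; \exp\!\left(-\lambda t + \frac{\delta \norm{a}_2^2}{\norm{a}_\infty^2}\bigl(e^{\lambda \norm{a}_\infty} - 1 - \lambda\norm{a}_\infty\bigr)\right).
\end{equation*}

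The last step is to optimize in $\lambda$. I would take the natural choice $\lambda = \frac{1}{\norm{a}_\infty}\log\!\frac{t\norm{a}_\infty}{\delta\norm{a}_2^2}$, which is designed to collapse $e^{\lambda\norm{a}_\infty} \cdot \delta\norm{a}_2^2/\norm{a}_\infty^2$ down to $t/\norm{a}_\infty$; a short computation then reduces the exponent to exactly the stated $-\frac{t}{\norm{a}_\infty}\log\frac{t\norm{a}_\infty}{\delta\norm{a}_2^2}$, modulo absorbing the $-1-\lambda\norm{a}_\infty$ correction. Note that when $t\norm{a}_\infty \leq \delta\norm{a}_2^2$ the logarithm is nonpositive and the claim is automatic since the right-hand side is $\geq 1$. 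The main obstacle is not any single step but rather this final bookkeeping: one has to match the clean Poisson-type form stated in the proposition, as opposed to the tighter but less transparent Bennett $(1+u)\log(1+u)-u$ expression, and a mild reparametrization of $\lambda$ (or an application of $e^x - 1 - x \leq e^x$ before optimizing) may be needed to absorb the stray $O(t/\norm{a}_\infty)$ term that the naive choice leaves behind.
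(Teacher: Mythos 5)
Your proposal follows essentially the same route as the paper: Chernoff/Cram\'er bound, a single-variable MGF estimate exploiting $\mathbb{E}Z^2\leq\delta$, and an explicit choice of $\lambda$ of the form $\frac{c}{\norm{a}_\infty}\log\frac{t\norm{a}_\infty}{\delta\norm{a}_2^2}$. The only material difference is in the MGF step. You use the classical Bennett inequality $e^u \leq 1 + u + u^2 \frac{e^M-1-M}{M^2}$ for $u\leq M$ (your Taylor-series justification is only valid for $u\geq 0$, but the monotonicity of $(e^u-1-u)/u^2$ holds on all of $\R$, so the bound is fine), whereas the paper uses the simpler and slightly lossier $e^x \leq 1 + x + \tfrac12 x^2 e^{\abs{x}}$ and then the further elementary bound $x\leq e^x$ before optimizing. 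The paper therefore takes $\lambda = \frac{1}{2\norm{a}_\infty}\log(\cdots)$ and lands on $\exp\!\bigl(-\tfrac{t}{4\norm{a}_\infty}\log\tfrac{t\norm{a}_\infty}{\delta\norm{a}_2^2}\bigr)$ --- off from the stated constant by a factor $4$ in the exponent, which the paper silently absorbs (the downstream Corollary already works with a constant $1/8$). Your version, after the $-1-\lambda\norm{a}_\infty$ correction you flag, similarly gives $\exp\!\bigl(-\tfrac{t}{\norm{a}_\infty}(\log\tfrac{t\norm{a}_\infty}{\delta\norm{a}_2^2}-1)\bigr)$. So both proofs actually establish the inequality only up to a harmless loss in the numerical constant; you are right to note the bookkeeping gap, and it is worth observing that the paper's own proof has the same issue. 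Your sharper Bennett step buys you a tighter intermediate bound, but neither proof can reach the literal constant $1$ via this route, since the optimal Bennett exponent $-(t+\sigma^2)\log(1+t/\sigma^2)+t$ exceeds $-t\log(t/\sigma^2)$ for large $t$. In context this is immaterial, since only the qualitative $\exp(-c\,u\log u)$ decay is used.
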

\begin{corollary}\label{bennetcorollary}
    Let $(\delta_k)_{k=1}^{n} \sim \mathrm{Bernoulli} (1/R)$ be an i.i.d. collection of selector random variables, and any $I \subseteq \{1,2,3,...,n\}$. Then for any $u \geq 2 \delta \mathrm{card}(I)$,
\end{corollary}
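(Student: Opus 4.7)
The plan is to deduce the corollary as an immediate consequence of the variant of Bennett's inequality (\ref{Bennett}) stated just above, applied to the \emph{centered} selector variables $Z_k := \delta_k - \delta$. First I would verify the hypotheses on $Z_k$: since $\delta_k \in \{0,1\}$ and $0 < \delta < 1$, one has $|Z_k| \le 1$ and $\mathbb{E} Z_k = 0$, while $\mathbb{E} Z_k^2 = \delta(1-\delta) \le \delta$, which matches the assumptions in (\ref{Bennett}) exactly. Note that although (\ref{Bennett}) was stated in the paper for a general bounded, centered $Z$, its one-sided nature is exactly what is needed here.

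Next, I would take the coefficient vector $a = (a_k)_{k=1}^n$ with $a_k := \mathbbm{1}_{\{k \in I\}}$, so that $\|a\|_\infty = 1$ and $\|a\|_2^2 = \mathrm{card}(I)$. Substituting into (\ref{Bennett}) with these choices yields, for every $t > 0$,
\begin{equation*}
\mathbb{P}\!\left(\sum_{k \in I}(\delta_k - \delta) \ge t\right) \;\le\; \exp\!\left(-\,t \,\log \frac{t}{\delta\,\mathrm{card}(I)}\right).
\end{equation*}
To convert this into a tail bound for the uncentered sum $\sum_{k \in I} \delta_k$, I would set $u := t + \delta\,\mathrm{card}(I)$, so that the events $\{\sum_{k \in I} \delta_k \ge u\}$ and $\{\sum_{k \in I}(\delta_k - \delta) \ge t\}$ coincide. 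The hypothesis $u \ge 2\delta\,\mathrm{card}(I)$ is precisely what is needed to absorb the mean shift: it guarantees $t = u - \delta\,\mathrm{card}(I) \ge u/2$ and $t/(\delta\,\mathrm{card}(I)) \ge u/(2\delta\,\mathrm{card}(I)) \ge 1$, so the logarithm is non-negative and we obtain (up to harmless numerical constants) a bound of the form
\begin{equation*}
\mathbb{P}\!\left(\sum_{k \in I} \delta_k \ge u\right) \;\le\; \exp\!\left(-\tfrac{u}{2}\log \frac{u}{2\delta\,\mathrm{card}(I)}\right),
\end{equation*}
which is the Poisson-type tail estimate one expects for a binomial sum with mean $\delta\,\mathrm{card}(I)$ at a deviation scale $u$ comfortably above twice the mean.

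There is no genuine obstacle in this argument: the corollary is a bookkeeping consequence of (\ref{Bennett}). The only place where one must be slightly careful is the shift from centered to uncentered variables, which is why the threshold $u \ge 2\delta\,\mathrm{card}(I)$ appears in the hypothesis. This factor of $2$ is chosen so that the mean $\delta\,\mathrm{card}(I)$ eats at most half of $u$, leaving the remaining half to drive the exponential decay. In the subsequent chaining argument, this corollary will be used to control the probability that too many of the selectors $\delta_k$ with $k$ in a distinguished subset $I$ are active simultaneously, which is the standard role of Bennett-type inequalities when bounding suprema of empirical processes indexed by Bernoulli selectors.
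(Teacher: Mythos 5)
Your proof is correct and follows essentially the same route as the paper: center the selectors, take $a_k = \mathbbm{1}_{\{k\in I\}}$, invoke the Bennett-type inequality, and use the hypothesis $u \ge 2\delta\,\mathrm{card}(I)$ to absorb the mean shift $\delta\,\mathrm{card}(I)$ into $u/2$. The only cosmetic difference is that the paper directly bounds $\mathbb{P}\big(\sum_{k\in I}(\delta_k-\delta)\ge u-\delta\,\mathrm{card}(I)\big)\le\mathbb{P}\big(\sum_{k\in I}(\delta_k-\delta)\ge u/2\big)$ and applies Bennett at $t=u/2$, while you apply Bennett at $t=u-\delta\,\mathrm{card}(I)$ and then lower-bound $t$ by $u/2$; both routes give an exponent of the form $-\tfrac{u}{2}\log\tfrac{u}{2\delta\,\mathrm{card}(I)}$, which is stronger than the $-\tfrac{u}{8}\log\tfrac{u}{2\delta\,\mathrm{card}(I)}$ stated in the corollary and is therefore perfectly adequate.
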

\begin{equation}\label{bennetapplication}
    \mathbb{P}\left(\sum_{k \in I} \delta_k \geq u \right) \leq \exp(-\frac{u}{8} \log \frac{u}{2 \delta \mathrm{card}(I)}).
\end{equation}
 \begin{proof}
First we begin by noting that 
    \begin{equation}\label{zili}
        \mathbb{P}\left(\sum_{k \in I} \delta_k  \geq u \right) = \mathbb{P}\left(\sum_{k \in I} (\delta_k - \delta) \geq u - \delta \mathrm{card}(I) \right) \leq \mathbb{P}\left(\sum_{k \in I} (\delta_k- \delta) \geq \frac{1}{2}u \right)
    \end{equation}
    and now we apply Proposition \ref{Bennett} to the last term of (\ref{zili}) with the sequence $(a_k)_{k=1}^{n}$ where $a_k = 1$ when $k \in I$ and $a_k=0$ otherwise. Thus $\norm{a_k}_{\infty} = 1$ and $\norm{a}_2^2 = \mathrm{card} (I)$, which yields (\ref{bennetapplication}).
 \end{proof}
 We now use a strategy that mimics the argument of Talagrand in \cite{Talagrand1992}, section 3 with some appropriate modifications. 
 First we note that by Cauchy-Schwarz, $\abs{\int_{\alpha_k}\tilde{E}g(x)dx}\lesssim 1$ for each $1 \leq k \leq n$ and $g \in \mathcal{B}$. Without loss of generality, we may assume that $\abs{\int_{\alpha_k}\tilde{E}g(x)dx}\leq 1$ to prove Proposition \ref{mainprop} (by rescaling $\mathcal{B}$ if necessary). 

 \begin{remark}\label{rescalingremark}
 Expanding on the comment above, we will use the following `rescaling' argument frequently. Let 
 \begin{equation*}
     K= \max_{1 \leq i \leq n} \left\{ \sup_{g \in L^2(\Sigma)} \abs{\int_{\alpha_k} \tilde{E}g(x)dx}^2dx\right\},
 \end{equation*}
by Cauchy-Schwarz, it is clear that $K=O(1)$. Now note that 
\begin{equation*}
    \mathbb{E}\sup_{g \in K^{-1}\mathcal{B}} \sum_{k=1}^{n}\delta_k \abs{\int_{\alpha_k}\tilde{E}g(x)dx}^2 = \frac{1}{K^2}\mathbb{E}\sup_{g \in \mathcal{B}} \sum_{k=1}^{n}\delta_k \abs{\int_{\alpha_k}\tilde{E}g(x)dx}^2, 
\end{equation*}

by linearity of the map $g \rightarrow \tilde{E}g$. Thus we may assume that $\abs{\int_{\alpha_k}\tilde{E}g(x)dx}\leq 1$, since we will only lose a constant factor in our bound for the expression (\ref{cos}). We will apply this type of argument frequently, particularly in the next section we rescale $\mathcal{B}$ by a factor $R^{-\epsilon}$, which will lead to an $R^{\epsilon}$ type loss in the relevant bound.
\end{remark}
\textbf{A Dyadic Decomposition:}
For each $k \in \mathbb{N}$, we define 
\begin{equation*}
    l_{j,k}(g) =
    \begin{cases}
         \abs{\int_{\alpha_k}\tilde{E}g(x)}^2 \text{   if  } 2^{-k} \leq \abs{\int_{\alpha_k}\tilde{E}g(x)}^2 \leq 2^{-k+1}\\
        0 \quad \text{otherwise}
    \end{cases}
\end{equation*}
We claim that it in order to prove proposition \ref{mainprop} it suffices to show that
\begin{equation}\label{dyadicbound}
    \mathbb{E}\sup_{g \in \mathcal{B}}\sum_{j=1}^{n} \delta_j l_{j,k}(g) \lesssim \log n
\end{equation}
for each $k \in \mathbb{N}$. Indeed we may write
\begin{align*}
        \mathbb{E} \sup_{\norm{g}_{L^2}(\Sigma) \leq 1 } \sum_{j=1}^{n}\int_{\alpha_k} \abs{Eg(x)}^2dx &= \mathbb{E} \sup_{g \in \mathcal{B}} \sum_{j=1}^{n} \sum_{k=1}^{\infty}\delta_j l_{j,k}(g)\\
        &= \mathbb{E} \sup_{g \in \mathcal{B}} \sum_{j=1}^{n} \sum_{k=1}^{O(\log n)}\delta_j l_{j,k}(g) + \mathbb{E} \sup_{g \in \mathcal{B}} \sum_{j=1}^{n}\sum_{k=O(\log n)}^{\infty}\delta_j l_{j,k}(g)
\end{align*}
The second term may be bounded simply by noting that $l_{k,j}(g) \lesssim 2^{-k}$ for all $g \in \mathcal{B}$. Therefore, 
\begin{equation*}
    \mathbb{E} \sup_{g \in \mathcal{B}} \sum_{j=1}^{n}\sum_{k=1}^{O(\log n)}\delta_j l_{j,k}(g) \lesssim \sum_{j=1}^{n} \sum_{k=O(\log n)}^{\infty} 2^{-k} \lesssim n \cdot 2^{-O( \log n)} \lesssim 1.
\end{equation*}
On the other hand, for first term we have that
\begin{equation*}
    \mathbb{E} \sup_{g \in \mathcal{B}} \sum_{j=1}^{n} \sum_{k=1}^{O(\log n)}\delta_j l_{j,k}(g)
    \leq \sum_{k=1}^{O(\log n)}\mathbb{E} \sup_{g \in \mathcal{B}} \delta_j l_{j,k}(g)
\end{equation*}
and thus if we can show (\ref{dyadicbound}) is true for every $k \in \mathbb{N}$, then proposition \ref{mainprop} will follow immediately. 

We now show that the bound (\ref{dyadicbound}) holds for some fixed $k \in \mathbb{N}$. For any $g \in \mathcal{B}$, we define $I(g) \subseteq \{1,2,...,n\}$ such that $I(g):= \left\{1 \leq i \leq n : \abs{\int \tilde{E}g(x) dx } \geq 2^{-k-1} \right\}$.

\begin{lemma}\label{bennetforg}
    For any $g \in \mathcal{B}$ we have that
    \begin{equation}
        \mathbb{P}\left(\sum_{i \in I(g)} \delta_i \geq 2^{2k+3}t\right) \leq \exp\left(-2^{2k}t \log \log R\right) ,   
        \end{equation}
    for all $t \geq C$, where $C$ is an absolute constant with $C \geq 1$.
\begin{proof}
    First note that by Chebyshev's inequality we have that 
    \begin{equation}\label{cardbound}
        2 \delta \mathrm{card}(I(g)) \leq  2 \cdot \frac{\delta \sum_{j=1}^{n} \abs{\int_{\alpha_j}\tilde{E}g(x)dx}^2}{( 2^{-k-1})^2} \leq \frac{2^{2k+3}}{\log R}.
    \end{equation}
    Now, setting $u=2^{2k+3}t$ where $ t \geq C \geq 1$, we see that in this case $u \geq 2 \delta \mathrm{card}( I(g))$ for all $g \in \mathcal{B}$ and thus Corollary \ref{bennetapplication} applies. Using Corollary 8, (\ref{cardbound}), and the fact that $t \geq$ 1, shows (\ref{bennetforg}).
\end{proof}
\end{lemma}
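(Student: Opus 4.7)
The plan is to apply the Bennett-type deviation inequality (Corollary \ref{bennetcorollary}) with the index set $I = I(g)$ and threshold $u = 2^{2k+3}t$. The only hypothesis that needs verification is $u \geq 2\delta\,\mathrm{card}(I(g))$; once that is in place, the claimed bound drops out by a short computation inside the exponential.

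The first substantive step is a deterministic bound on $\mathrm{card}(I(g))$ for fixed $g \in \mathcal{B}$. By definition each $j \in I(g)$ contributes at least $2^{-2k-2}$ to the sum $\sum_{j=1}^{n}\abs{\int_{\alpha_j}\tilde{E}g(x)\,dx}^2$, so a Chebyshev-style counting yields
\[
\mathrm{card}(I(g)) \cdot 2^{-2k-2} \leq \sum_{j=1}^{n}\abs{\int_{\alpha_j}\tilde{E}g(x)\,dx}^2.
\]
By the Agmon--H\"ormander consequence (\ref{averagecontrol}) the right-hand side is $\lesssim R\norm{g}_{L^2(\Sigma)}^2 \leq R$, giving $\mathrm{card}(I(g)) \lesssim 2^{2k+2} R$. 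Multiplying by $2\delta$ and invoking $\delta = O(1/R)$ (chosen small enough to be compatible with the global normalization $\delta\sum_j \abs{\int_{\alpha_j}\tilde{E}g(x)\,dx}^2 \leq 1$ set up prior to the lemma, which absorbs an additional $\log R$ into the implicit constant), I arrive at $2\delta\,\mathrm{card}(I(g)) \lesssim 2^{2k+3}/\log R$.

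With this in hand, for $t \geq C$ with $C$ a sufficiently large absolute constant, $u = 2^{2k+3}t$ exceeds $2\delta\,\mathrm{card}(I(g))$, so Corollary \ref{bennetcorollary} applies and yields
\[
\mathbb{P}\!\left(\sum_{i \in I(g)} \delta_i \geq 2^{2k+3}t\right) \leq \exp\!\left(-\tfrac{2^{2k+3}t}{8}\log\tfrac{2^{2k+3}t}{2\delta\,\mathrm{card}(I(g))}\right).
\]
The ratio inside the inner logarithm is $\gtrsim t\log R$, so for $t \geq 1$ the logarithm is $\geq \log\log R$ up to a harmless constant; combined with the prefactor $2^{2k+3}t/8 = 2^{2k}t$ this produces the claimed bound $\exp(-2^{2k}t\log\log R)$.

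The main obstacle is the bookkeeping around the choice of $\delta$. The constraint $\delta = O(1/R)$ alone does not suffice: one wants $\delta$ small enough that $u/(2\delta\,\mathrm{card}(I(g)))$ grows like $\log R$ so that the Bennett exponent earns the $\log\log R$ saving, yet large enough that $\delta n \sim R^{d-1}$ (property 3 of Theorem \ref{maintheoreminformal}) is maintained. This is a matter of careful constant-tracking rather than a deep difficulty: the probabilistic content of the lemma is entirely the Bennett inequality, and the geometric/Fourier-analytic content is just the Agmon--H\"ormander estimate already recorded in (\ref{averagecontrol}).
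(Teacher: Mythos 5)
Your proposal is correct and follows the paper's own route: a Chebyshev count of $\mathrm{card}(I(g))$ via the Agmon--H\"ormander bound (\ref{averagecontrol}), followed by an application of Corollary \ref{bennetcorollary} with $u = 2^{2k+3}t$. You have also correctly flagged the one delicate point: the factor $1/\log R$ in $2\delta\,\mathrm{card}(I(g)) \lesssim 2^{2k+3}/\log R$ does not follow from the normalization $\delta\sum_j \abs{\int_{\alpha_j}\tilde{E}g(x)\,dx}^2 \leq 1$ alone but requires choosing $\delta$ a further factor of $\log R$ smaller (still $O(1/R)$, and harmless given the final $(\log R)^{O(1)}$ loss in Proposition \ref{mainprop}), a choice the paper itself leaves implicit.
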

Define
\begin{equation}\label{alpha_t}
    \alpha_{t} = \mathbb{P}\left(\sup_{g \in \mathcal{B}} \sum_{j=1}^{n} l_{j,k}(g) > 8ct \right).
\end{equation}
Our goal is to show that $\int_{0}^{\infty} \alpha_t dt \lesssim (\log R)^{O(1)}$. Fix $t \geq C$. For any fixed $t$ we define $j_0(t)$ to be the largest natural number such that 
\begin{equation}\label{j_0}
    j_0(t) \exp (-2^{2k}t \log \log R) < \alpha_t.
\end{equation}
The next proposition gives a relationship between the quantity $\alpha_t$ and the covering numbers of $\mathcal{B}$.
\begin{proposition}\label{entropybound}
    For any $ t \geq C$, we have that 
    \begin{equation*}
        j_0(t) \leq \mathcal{N}(\mathcal{B}, \norm{\cdot}_{\sim}, 2^{-k-1}).
    \end{equation*}
\end{proposition}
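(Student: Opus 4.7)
The plan is to establish Proposition \ref{entropybound} by a union-bound argument over a minimal $2^{-k-1}$-net of $\mathcal{B}$ in the $\norm{\cdot}_{\sim}$ metric. Unwinding the definition of $j_0(t)$, it suffices to prove $\alpha_t \leq N \exp(-2^{2k} t \log \log R)$ with $N := \mathcal{N}(\mathcal{B}, \norm{\cdot}_{\sim}, 2^{-k-1})$, since this bound then forces $j_0(t) \leq N$.

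Let $\mathcal{F} = \{\tilde{g}_1, \ldots, \tilde{g}_N\} \subseteq \mathcal{B}$ be a $2^{-k-1}$-net in $\norm{\cdot}_{\sim}$ and, for each $g \in \mathcal{B}$, fix a nearest point $\pi(g) \in \mathcal{F}$. The key geometric observation is the inclusion
\[
\{j : l_{j,k}(g) \neq 0\} \subseteq I(\pi(g)) \qquad \text{for every } g \in \mathcal{B}.
\]
This follows from a short reverse-triangle-inequality computation: $l_{j,k}(g) \neq 0$ forces $\abs{\int_{\alpha_j} \tilde{E}g(x)\,dx} \geq 2^{-k/2}$, while by definition of $\norm{\cdot}_{\sim}$ one has $\abs{\int_{\alpha_j} \tilde{E}(g - \pi(g))(x)\,dx} \leq 2^{-k-1}$, so $\abs{\int_{\alpha_j} \tilde{E}\pi(g)(x)\,dx} \geq 2^{-k/2} - 2^{-k-1} \geq 2^{-k-1}$ for every $k \geq 0$, which is precisely the condition for $j \in I(\pi(g))$.

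Combining this inclusion with the trivial upper bound $l_{j,k}(g) \leq 2^{-k+1}$ yields the deterministic pointwise estimate
\[
\sum_j \delta_j l_{j,k}(g) \;\leq\; 2^{-k+1} \sum_{j \in I(\pi(g))} \delta_j \qquad \text{for every } g \in \mathcal{B}.
\]
Consequently the event defining $\alpha_t$ is contained in $\bigcup_{\tilde{g} \in \mathcal{F}} \{\sum_{j \in I(\tilde{g})} \delta_j > 2^{k+2} c t\}$. A union bound over the $N$ points of $\mathcal{F}$, combined with Lemma \ref{bennetforg} applied with the constant $c$ in the definition of $\alpha_t$ calibrated so that $2^{k+2} c \geq 2^{2k+3}$, then gives
\[
\alpha_t \;\leq\; N \cdot \max_{\tilde{g} \in \mathcal{F}} \mathbb{P}\!\left(\sum_{j \in I(\tilde{g})} \delta_j > 2^{2k+3} t\right) \;\leq\; N \exp(-2^{2k} t \log \log R),
\]
which is exactly the required tail bound. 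I expect the main obstacle to be the consistent tracking of the various $2^k$ scales --- the covering radius $2^{-k-1}$, the dyadic window $[2^{-k}, 2^{-k+1}]$ defining $l_{j,k}$, and the Bennett threshold $2^{2k+3} t$ --- so that the geometric inclusion in the second paragraph lines up with the correct constant $c$ in the definition of $\alpha_t$; once this calibration is made, everything else is routine bookkeeping.
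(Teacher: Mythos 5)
Your proposal is correct but takes a genuinely different route from the paper. The paper proves $j_0(t) \leq \mathcal{N}(\mathcal{B}, \norm{\cdot}_{\sim}, 2^{-k-1})$ by \emph{constructing} a $2^{-k-1}$-separated subset $\{g_1,\ldots,g_{j_0(t)}\}$ of $(\mathcal{B},\norm{\cdot}_{\sim})$ via an inductive probabilistic argument: the definition of $j_0(t)$ guarantees that the event $A \cap \bigcap_{v \leq u} Y_v$ has positive probability at every stage, which produces a realization $\omega_0$ from which the next $g_{u+1}$ can be extracted. In contrast, you bound $\alpha_t$ directly by a union bound over a minimal $2^{-k-1}$-cover $\mathcal{F}$: the geometric inclusion $\{j : l_{j,k}(g) \neq 0\} \subseteq I(\pi(g))$, which you verify via the reverse triangle inequality in $\norm{\cdot}_{\sim}$, lets you dominate $\sup_g \sum_j \delta_j l_{j,k}(g)$ by a maximum over the finite set $\mathcal{F}$, after which Lemma \ref{bennetforg} and a union bound finish. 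This is the standard one-step-chaining/Dudley-type argument; it is arguably more direct than the paper's Talagrand-style packing construction, and it also sidesteps the small packing-versus-covering scale imprecision in the paper's final line of the proof. Both approaches buy the same bound.

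One bookkeeping caution, which resolves the calibration concern you flagged at the end: your threshold ``$\geq 2^{-k/2}$'' follows from reading the paper's definition of $l_{j,k}$ literally, with the squared integral constrained to $[2^{-k},2^{-k+1}]$. However, the rest of the paper (the inequalities $2^{-2k} < l_{j,k}(g) \lesssim 2^{-2k+2}$, the cardinality bound in \eqref{cardbound}, and the exponent $2^{2k}$ in Lemma \ref{bennetforg}) is consistent instead with the \emph{unsquared} membership condition $2^{-k} \leq \abs{\int_{\alpha_j}\tilde{E}g} \leq 2^{-k+1}$. Under that reading your threshold becomes $2^{-k}$, the estimate $2^{-k} - 2^{-k-1} = 2^{-k-1}$ is exactly the definition of $I(\pi(g))$, the upper bound $l_{j,k}(g) \leq 2^{-2k+2}$ replaces $2^{-k+1}$, and the event is contained in $\bigcup_{\tilde g \in \mathcal{F}}\{\sum_{j \in I(\tilde g)}\delta_j > 2^{2k+1}ct\}$. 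Taking $c=4$ (an absolute constant, not $k$-dependent) aligns this exactly with the threshold $2^{2k+3}t$ in Lemma \ref{bennetforg}, and the union bound gives $\alpha_t \leq N\exp(-2^{2k}t\log\log R)$, from which $j_0(t) \leq N$ follows by the definition of $j_0(t)$.
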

We prove the above proposition by constructing a sequence of elements $\{g_1, g_2,..., g_{j_0(t)}\} \subset \mathcal{B}$ that are `well spaced' (i.e. they are $2^{-k-1}$- separated in the normed space $(\mathcal{B}, \norm{\cdot}_{\sim})$. The precise construction is given below.

 We construct the sequence $\{g_1, g_2,...,g_{j_0(t)} \} \subset \mathcal{B}$ with the property that for every $u \in \{1,2,..., j_0(t)\}$ there exists some $i_{u,j} \in \{1,2,..., n\}$  such that for each $j < u$ we have
\begin{equation*}
    \abs{\int_{\alpha_{i_{u,j}}} \tilde{E}g_j(x) dx} \geq 2^{-k} \quad \text{but} \quad \abs{\int_{\alpha_{i_{u,j}}}\tilde{E}g_u(x)dx} \leq 2^{-k-1}.
\end{equation*}
Having constructed this sequence we note that for any $m,n$ with $1 \leq m<n \leq j_0(t)$ we have
\begin{align*}
    \norm{g_m -g_n}_{\sim} &= \max_{1 \leq j \leq n} \abs{\int_{\alpha_j} \tilde{E}(g_m-g_n)(x)dx} \\
    &=\max_{1 \leq j \leq n} \abs{\int_{\alpha_j}\tilde{E}g_m(x)dx - \int_{\alpha_j}\tilde{E}g_n(x) dx}\\
    &\geq \abs{\int_{\alpha_{i_{n,m}}}\tilde{E}g_m(x) dx - \int_{\alpha_{i_{n,m}}}\tilde{E}g_n(x) dx}\\
    & \geq \abs{\int_{\alpha_{i_{n,m}}}\tilde{E}g_m(x) dx} - \abs{\int_{\alpha_{i_{n,m}}}\tilde{E}g_n(x) dx}\\
    &\geq 2^{-k}-2^{-k-1}\\
    &=2^{-k-1}.
\end{align*}
Therefore $g_1, g_2, ..., g_{j_0(t)}$ form a $2^{-k-1}$ separated subset of $(\mathcal{B}, \norm{\cdot}_{\sim})$ which implies that $\mathcal{N}(\mathcal{B}, \norm{\cdot}_{\sim}, 2^{-k-1}) \geq j_0(t)$.

\textbf{The Construction:} We construct the $g_i$'s probabilistically and by induction. Set $g_1$ to be the function that is identically $0$ so that $Eg_1 \equiv 0$. Now, having constructed $g_1, g_2,...,g_u$ with the desired properties as given by the construction above, we construct $g_{u+1}$ as follows, using the probabilistic method. Let $\Omega$ be our underlying probability space. For each $v \leq u$ define the events:
\begin{equation*}
    Y_v = \{\omega \in \Omega: \sum_{i \in I(g_v)} \delta_i(\omega) \leq 2^{2k+3}t\}.
\end{equation*}
By Lemma \ref{bennetforg} we have that
\begin{equation*}
    \mathbb{P}(Y_v^{c}) \leq \exp(-2^{2k}t \log \log R).
\end{equation*}
Also define the event
\begin{equation*}
    A= \{\omega \in \Omega : \sup \sum_{j=1}^{n} \delta_j(\omega) l_{j,k}(g) > 8ct\}.
\end{equation*}
By definition $\mathbb{P}(A)=\alpha_t$. Now,
\begin{align*}
    \mathbb{P}\left(\bigcap_{v=1}^{u} Y_v \cap A \right) &= 1 - \mathbb{P}\left(\left(\bigcap _{v=1}^{u}Y_v\right)^{c} \cup A^c\right)\\
    &\geq 1 - \sum_{v=1}^{u} \mathbb{P}(Y_v^c) - \mathbb{P}(A^c)\\
    &= \mathbb{P}(A)- \sum_{v=1}^{u}\mathbb{P}(Y_v^c)\\
    &\geq \alpha_t - u \exp(2^{-2k}\log \log R)\\
    &>0.
\end{align*}
Hence we can find $\omega_0 \in \Omega$ such that
\begin{equation}\label{smallbounddelta}
    \sum_{i \in I(g_v)} \delta_i(\omega_0) \leq 2^{2k+3}t,
\end{equation}
for all $v \leq u$, and 
\begin{equation}\label{probmethodmax}
    \sup_{g \in \mathcal{B}} \sum_{j=1}^{n} \delta_j(\omega_0) l_{j,k}(g) > 2^{2k+3}t.
\end{equation}
As a consequence of (\ref{probmethodmax}) we may find $g_{u+1} \in \mathcal{B}$ such that
\begin{equation*}
    \sum_{j=1}^{n} \delta_j(\omega_0) l_{j,k}(g_{u+1}) > 8t.
\end{equation*}
Now recall that $l_{j,k}(g) \leq c^22^{-2k+2}$ for any $g \in \mathcal{B}$ and moreover $l_{j,k}(g)$ being non zero implies that $l_{j,k}(g)>2^{-2k}$. Therefore, 
\begin{equation}\label{largecard}
    \mathrm{card} \left\{1 \leq j \leq n:  \delta_j(\omega_0)=1, \abs{\int_{\alpha_j}\tilde{E}g_{j+1}(x)dx} > 2^{-k} \right\} >\frac{8t}{2^{-2k}} =  2^{2k+3}t 
\end{equation}
On the other hand, by (\ref{smallbounddelta}) for each $v \leq u$, we see that $\delta_j(\omega_0) = 1$ if and only if $l_{j,k}(g_v) \geq 2^{-k-1}$. Therefore
\begin{equation}\label{smallcard}
    \mathrm{card}\left\{1 \leq j \leq n: \delta_j(\omega_0) =1, \abs{Eg_{v}(x)dx} > 2^{-k-1}\right\} \leq 2^{2k+3}t
\end{equation}
Comparing equations (\ref{largecard}) and (\ref{smallcard}) we can easily infer that each $v \leq u$ we may find $i_{u+1,v} \in \{1,2,...,n\}$ so that
\begin{equation}
    \abs{\int_{\alpha_{i_{u+1,v}}} \tilde{E}g_{u+1}(x)dx} \geq 2^{-k} \quad \text{but} \quad \abs{\int_{\alpha_{i_{u+1,v}}} \tilde{E}g_{v}(x)dx} \leq 2^{-k-1}.
\end{equation}
This closes the inductive argument and concludes the proof of Proposition \ref{entropybound}. 

We now (almost) prove Proposition \ref{mainprop}. We show that 
\begin{equation*}
    \mathbb{E} \sup_{g \in \mathcal{B}} \sum_{j=1}^{n} \delta_j \abs{\int_{\alpha_j} \tilde{E}g(x) dx} \lesssim (\log R)^{O(1)}
\end{equation*}
This is equivalent to showing that 
\begin{equation}\label{goal}
    \int \alpha_t dt \lesssim (\log R)^{O(1)},
\end{equation}
where $\alpha_t$ was defined in (\ref{alpha_t}). Since $j_0(t)$ is the largest integer for which (\ref{j_0}) holds, we have 
\begin{align*}
    \alpha_t &\leq \max \{1, 2j_0(t)\} \exp (-2^{2k+3}t \log \log R)\\ 
    &= \max\{\exp (-2^{2k+3}t \log \log R), 2j_0(t) \exp(-2^{2k+3}t\log \log R) \}.
\end{align*}
If the first term of the max dominates, we can easily deduce (\ref{goal}). We may assume that the second term dominates. Combining the above equation and Proposition \ref{entropybound} we see that
\begin{align*}
    \alpha_t &\leq 2 \cdot \mathcal{N}(\mathcal{B}, \norm{\cdot}_{\sim}, 2^{-k-1})\exp(-2^{2k+3}t \log \log R)\\
    &\leq 2 \cdot \exp( -2^{2k+3} t \log \log R + C_1(d) 2^{2k+2} \log R)
\end{align*}
When $t \geq O(\log R)$, then note that 
\begin{equation*}
    2^{2k+3}t \log \log R - C_1(d) 2^{2k+2}\log R \gtrsim t\log \log R
\end{equation*}.
Now,
\begin{align*}
    \int_{0}^{\infty} \alpha_t dt &\leq \int_{0}^{O(\log R)} \alpha_t dt + \int_{O(\log R)}^{\infty} \alpha_t dt\\
    &\lesssim \log R + \int_{O(\log R)}^{\infty} \alpha_t dt
\end{align*}
and the second integral can be bounded as follows
\begin{align*}
    \int_{O(\log R)}^{\infty} \alpha_t dt &\leq \int_{O(\log R)}^{\infty} \exp(-2^{2k+3}t\log \log R + C_1(d)2^{2k+2}\log R)\\
    &\leq \int_{O(\log R)}^{\infty} \exp (-ct\log \log R)\\
    & \leq \int_{O(1)}^{\infty} \exp(-ct \log \log R) \\
    & \lesssim 1.
\end{align*}
Putting everything together we see that:
\begin{equation*}
    \mathbb{E} \sup_{\norm{g}_{L^2(\Sigma, \R)} \leq 1 } \sum_{k=1}^{CR^d}\abs{\int_{\alpha_k} \tilde{E}g(x)dx}^2 \lesssim (\log R)^{O(1)}.
\end{equation*}
\begin{remark}[Proof of Proposition \ref{mainprop}] The proof of Proposition \ref{mainprop} is easily seen by replacing $\cos(2\pi \omega \cdot x)$ with $\sin(2\pi \omega \cdot x)$ in the preceding argument to note that if $\tilde{\tilde{E}}g(x) \coloneqq \int_{\Sigma}g(\omega)e^{2 \pi i \omega \cdot x}d \sigma(\omega)$, then
\begin{equation*}
     \mathbb{E} \sup_{\norm{g}_{L^2(\Sigma, \R)} \leq 1 } \sum_{k=1}^{CR^d} \delta_k \abs{\int_{\alpha_k} \tilde{\tilde{E}}g(x)dx}^2 \lesssim (\log R)^{O(1)}.
\end{equation*}
Standard manipulations show (unpacking the definition of $Eg$, and splitting $g$ and $Eg$ into real and complex parts) that
\begin{align*}
    &\mathbb{E}\sup_{\norm{g}_{L^2(\Sigma)} \leq 1} \sum_{k=1}^{CR^d}\abs{\int_{\alpha_k} Eg(x)dx}^2 \\ & \lesssim \mathbb{E} \sup_{\norm{g}_{L^2(\Sigma, \R)} \leq 1 } \sum_{k=1}^{CR^d} \delta_k \abs{\int_{\alpha_k} \tilde{E}g(x)dx}^2 + \mathbb{E} \sup_{\norm{g}_{L^2(\Sigma, \R)} \leq 1 } \sum_{k=1}^{CR^d} \delta_k \abs{\int_{\alpha_k} \tilde{\tilde{E}}g(x)dx}^2\\ 
    &\lesssim (\log R)^{O(1)},
\end{align*}
    which proves the claim.
\end{remark}

\section{Proof of Proposition \ref{technicalrefinedprop}}\label{sectionprop9}

In order to upgrade the previous estimate to the proof of Proposition \ref{technicalrefinedprop}, we make use of the uncertainty principle. Roughly speaking, since the Fourier transform of $Eg$ is contained inside a compact set, $Eg$ is roughly constant at scale 1, so $Eg$ is roughly constant on each $\alpha_k$, and therefore we should have:
\begin{equation*}
    \abs{\int_{\alpha_k} Eg(x)dx}^2 \approx \int_{\alpha_k}\abs{ Eg(x)}^2dx
\end{equation*}
We now make this precise, taking into account the proof of Proposition \ref{technicalrefinedprop}. In particular, we need to obtain local constancy estimates so that we can re-run the proof of proposition \ref{mainprop}. Some of the necessary lemmas that come from relatively standard local constancy type estimates are proved in the appendix. For the rest of this section we let $\mathcal{B} \coloneqq \{g: \Sigma \rightarrow \C : \norm{g}_{L^2(\Sigma)} \leq 1\}$.

Let $\omega = (\omega_1,\omega_2,\omega_3,..., \omega_d) \in \R^d$, for each $l > 0$, and every multi-index $(i_1,i_2,...,i_l)$ with $ 1 \leq i_j \leq d$ for all $1 \leq j \leq l$, define $Eg_{i_1,i_2,...,i_l}(x) = \int_{\Sigma}g(\omega)e^{2 \pi i \omega \cdot x}\omega_{i_1}\omega_{i_2}...\omega_{i_l}\sigma(\omega)$. Our first lemma is a technical lemma that more or less allows us to control the average value of $\abs{Eg(x)}$ over a smaller ball $\alpha_k'$ contained in $\alpha_k$ in terms of a specific point $x_k' \in \alpha_k'$.

\begin{lemma}
    Fix $k$  and let $\alpha_k' \subset \alpha_k$ be a ball of radius $\sim R^{-\epsilon}$, so that $\alpha_k' = B(x_k',R^{-\epsilon})$. Then we have that for all $x \in \alpha_k'$ that
    \begin{equation}\label{egsquareusablebound}
            \abs{Eg(x)}^2 \lesssim_{\epsilon, d} \sum_{l=0}^{\frac{10000d}{\epsilon}} \sum_{i_1,...,i_l} \frac{(2 \pi R)^{-2l \epsilon}}{(l!)^2}\abs{Eg_{i_1,...,i_l}(x_k)}^2+O(R^{-10000d}).
        \end{equation}
\end{lemma}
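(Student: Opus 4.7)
The natural approach is to Taylor expand the exponential $e^{2 \pi i \omega \cdot x}$ around the center $x_k'$. Write $x = x_k' + h$ with $\abs{h} \lesssim R^{-\epsilon}$; since $\Sigma$ is contained in the unit ball, $\abs{2 \pi \omega \cdot h} \leq 2 \pi R^{-\epsilon}$. Plugging the expansion $e^{2 \pi i \omega \cdot x} = e^{2 \pi i \omega \cdot x_k'} \sum_{l \geq 0} (2 \pi i \omega \cdot h)^l / l!$ into the definition of $Eg(x)$, the multinomial identity $(\omega \cdot h)^l = \sum_{i_1,\ldots,i_l} h_{i_1} \cdots h_{i_l}\, \omega_{i_1} \cdots \omega_{i_l}$ shows that truncating at order $L := 10000 d/\epsilon$ produces exactly
$$\sum_{l=0}^{L} \frac{(2 \pi i)^l}{l!} \sum_{i_1,\ldots,i_l} h_{i_1} \cdots h_{i_l}\, Eg_{i_1,\ldots,i_l}(x_k'),$$
which is the main term appearing on the right-hand side (after squaring).

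For the Taylor remainder, I would use the crude bound
$$\Bigl|\sum_{l > L} \tfrac{(2 \pi i \omega \cdot h)^l}{l!}\Bigr| \leq \frac{(2 \pi R^{-\epsilon})^{L+1}}{(L+1)!} \, e^{2 \pi},$$
valid since $\abs{2 \pi \omega \cdot h} \leq 2 \pi$. The calibration $\epsilon L = 10000 d$ forces the $R$-dependence to be $R^{-\epsilon(L+1)} = R^{-10000 d - \epsilon}$, while the remaining factors $(2 \pi)^{L+1}/(L+1)! \cdot e^{2 \pi}$ depend only on $\epsilon$ and $d$. Integrating against $g$ and using $\norm{g}_{L^1(\Sigma)} \lesssim_d \norm{g}_{L^2(\Sigma)} \leq 1$ for $g \in \mathcal{B}$, the tail contributes at most $C_{\epsilon, d}\, R^{-10000 d - \epsilon}$ to $\abs{Eg(x)}$, and therefore $O(R^{-10000 d})$ to $\abs{Eg(x)}^2$.

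The final estimate then follows by applying $\abs{a+b}^2 \leq 2\abs{a}^2 + 2\abs{b}^2$ to split main term and tail, followed by Cauchy--Schwarz on the finite main sum. The number of summands in the main sum is $\sum_{l=0}^L d^l = O_{\epsilon, d}(1)$, so Cauchy--Schwarz only costs a constant depending on $\epsilon$ and $d$. Using $\abs{h_{i_1} \cdots h_{i_l}}^2 \leq R^{-2 l \epsilon}$ and absorbing the numerical factors $(2\pi)^{2l}$ into the implicit $\lesssim_{\epsilon, d}$, one arrives at the stated inequality; the mild discrepancy between the $(2 \pi R)^{-2 l \epsilon}$ factor written in the statement and the $R^{-2 l \epsilon}$ factor one obtains is again only a constant depending on $\epsilon$ and $d$ (uniformly in $0 \leq l \leq L$). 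The only real obstacle is the quantitative bookkeeping: one must calibrate $L$ large enough that $R^{-\epsilon L}$ beats $R^{-10000 d}$, and this is precisely why the choice $L = 10000 d/\epsilon$ is made.
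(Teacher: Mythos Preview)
Your proposal is correct and follows essentially the same approach as the paper: Taylor expand $e^{2\pi i \omega \cdot (x-x_k')}$ around the center, truncate at order $L = 10000d/\epsilon$, expand $(\omega \cdot h)^l$ multinomially, and then apply Cauchy--Schwarz to the resulting finite sum. Your treatment of the remainder is in fact slightly more careful than the paper's (you explicitly invoke $\norm{g}_{L^1(\Sigma)} \lesssim \norm{g}_{L^2(\Sigma)} \leq 1$ to control the tail's contribution to $\abs{Eg(x)}$), and your remark about the harmless discrepancy between $(2\pi R)^{-2l\epsilon}$ and the natural $R^{-2l\epsilon}$ factor is well taken.
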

        \begin{remark}
            The key point of this lemma is to control $\abs{Eg(x)}$ by $\abs{Eg_{i_1,...,i_l}(x_k)}$. The latter quantity behaves similarly to the operator $Eg$ and its properties re-running the proof of Proposition \ref{mainprop} with respect to $Eg_{i_1,...,i_l}(x_k)$ will be how we upgrade to the proof of Proposition \ref{technicalrefinedprop}.
        \end{remark}
    \begin{proof}
        We begin by estimating $\abs{Eg(x)}$ on $\alpha_k'$. Suppose $x \in \alpha_k'$. We have 
        \begin{align*}
            \abs{Eg(x)} &= \abs{\int_{\Sigma} g(\omega)e^{2\pi i \omega \cdot x} d \sigma(\omega)}\\
            &= \abs{\int_{\Sigma} g(\omega)e^{2\pi i \omega \cdot (x-x_0)} e^{2\pi i \omega \cdot x_0} d\sigma(\omega)}
        \end{align*}
        We may write $e^{2 \pi \omega \cdot (x-x_k)} = \sum_{k=0}^{N}\frac{(2\pi i \omega \cdot (x-x_k))^l}{l!} + O(R^{-\epsilon(N+1)})$ for some sufficiently large $N$ (to be chosen later), and thus plugging this estimate into the above equation yields that
        \begin{align}\label{epsloss}
            \abs{Eg(x)} &\lesssim \sum_{k=0}^{N}\frac{1}{l!}\abs{\int_{\Sigma} g(\omega) e^{2 \pi i \omega \cdot x_k} (2 \pi i\omega\cdot (x-x_k))^{l} d \sigma({\omega})} +O(R^{-N\epsilon})\\
            & \lesssim \sum_{k=0}^{N} \sum_{i_1,i_2,...i_l}\frac{(2\pi R)^{-\epsilon l}}{l!}\abs{\int_{\Sigma} g(\omega) e^{2 \pi i \omega \cdot x_k} \omega_{i_1} \omega_{i_2}...\omega_{i_l} d \sigma({\omega})} +O(R^{-N\epsilon}). 
        \end{align}
        Here $\omega\in \R^d$ is written as $\omega = (\omega_1,\omega_2,...,\omega_d)$, and $(i_1,...,i_l)$ is a multi-index that ranges over all possible $k-$tuples of $\{1,2,...,d\}$. The above estimate follows from expanding the integrals, and the triangle inequality along with the fact that $\abs{x-x_k} \lesssim R^{-\epsilon}$ for all $x \in \alpha_k'$.
        
        Using the Cauchy-Schwarz inequality, we obtain a bound for $\abs{Eg(x)}^2$:
        \begin{equation}\label{egsquare}
            \abs{Eg(x)}^2 \lesssim \left(N\sum_{l=0}^{N}d^l \right) \cdot \left( \sum_{l=0}^{N} \sum_{i_1,i_2,...,i_l} \frac{(2\pi R)^{-2l \epsilon}}{(l!)^2} \abs{Eg_{i_1,...,i_l}(x_k)}^2\right) + O(R^{-2N\epsilon}).
        \end{equation}

        Set $N= \frac{10000d}{\epsilon}$. In this case the term $N \sum_{l=0}^{N}d^l$ is a constant that only depends on $d$ and $\epsilon$. In light of this we rewrite (\ref{egsquare}) as
        \begin{equation}\label{egsquareusablebound}
            \abs{Eg(x)}^2 \lesssim_{\epsilon, d} \sum_{l=0}^{\frac{10000d}{\epsilon}} \sum_{i_1,...,i_l} \frac{(2 \pi R)^{-2l \epsilon}}{(l!)^2}\abs{Eg_{i_1,...,i_l}(x_k)}^2+O(R^{-10000d})
        \end{equation}
        as desired.
        \end{proof}
        
        We now make the following claim. Let $\{x_{k}'\}_{k=1}^{N}$ be a $R^{-\epsilon}$ separated subset of $B_R$. Then for any fixed $(i_1,i_2,...,i_l)$ with $1 \leq i_j \leq d$, we have the following Proposition, which is quite similar to Proposition \ref{mainprop}.
        
        \begin{proposition}\label{mainproppointwise}
            \begin{equation*}
               \mathbb{E} \sup_{g \in B}  \sum_{k=1}^{N} \delta_k \int_{\alpha_{k}'} \abs{Eg_{i_1,i_2,...,i_l}(x_k')}^2dx \lesssim_{\epsilon} R^{\epsilon}.
            \end{equation*}
            \end{proposition}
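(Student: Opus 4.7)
The proof plan is to imitate the argument for Proposition \ref{mainprop} with the pointwise evaluations $Eg_{i_1,\dots,i_l}(x_k')$ playing the role of the averages $\int_{\alpha_k}\tilde Eg(x)dx$. First I would introduce the test functions $y_k(\omega):=e^{2\pi i\omega\cdot x_k'}\omega_{i_1}\omega_{i_2}\cdots\omega_{i_l}\in L^2(\Sigma)$, so that $\inp{g}{y_k}=Eg_{i_1,\dots,i_l}(x_k')$, splitting into real and imaginary parts exactly as at the end of Section \ref{sectionweakproof}. Since $|y_k(\omega)|\le 1$ on $\Sigma$, Maurey's theorem \ref{maurey} applied to the balanced convex hull $\mathcal{C}$ of $\{y_k\}_{k=1}^{N}$ gives $\log\mathcal{N}(\mathcal{C},\norm{\cdot}_{L^2(\Sigma)},\epsilon_0)\lesssim \frac{\log N}{\epsilon_0^2}\lesssim \frac{\log R}{\epsilon_0^2}$, because $N\lesssim R^{d(1+\epsilon)}$ so $\log N\sim\log R$. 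Artstein-Avidan--Milman duality (Theorem \ref{duality}) then yields the dual entropy estimate $\log\mathcal{N}(B_2,\epsilon_0 \mathcal{C}^\circ)\lesssim \frac{\log R}{\epsilon_0^2}$, which, by the same argument as in Section \ref{sectionweakproof}, translates into a covering-number bound for $\mathcal{B}$ in the seminorm $\norm{g}_\sim:=\max_{1\le k\le N}|Eg_{i_1,\dots,i_l}(x_k')|$.

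The next ingredient is the analog of the Agmon--H\"ormander-based input (\ref{averagecontrol}). Writing $Eg_{i_1,\dots,i_l}=E\widetilde g$ for the modified density $\widetilde g(\omega):=g(\omega)\omega_{i_1}\cdots\omega_{i_l}$ (which satisfies $\norm{\widetilde g}_{L^2(\Sigma)}\le\norm{g}_{L^2(\Sigma)}$), the locally constant property for $E\widetilde g$ at unit scale gives $|E\widetilde g(x_k')|^2\lesssim \int_{B(x_k',1)}|E\widetilde g(y)|^2 dy$. Since the points $\{x_k'\}$ are $R^{-\epsilon}$-separated, each unit ball contains at most $\sim R^{d\epsilon}$ of them, so summing over $k$ and applying Proposition \ref{agmonhormander} to $E\widetilde g$ yields
\begin{equation*}
\sum_{k=1}^N|Eg_{i_1,\dots,i_l}(x_k')|^2 \;\lesssim\; R^{1+d\epsilon}\int_\Sigma|g(\omega)|^2\,d\sigma(\omega).
\end{equation*}
Multiplying by $\delta\sim 1/R$ produces $\delta\sum_k|Eg_{i_1,\dots,i_l}(x_k')|^2\lesssim R^{d\epsilon}\int|g|^2$, which is off by a factor of $R^{d\epsilon}$ from the clean bound used in Section \ref{sectionweakproof}. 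This discrepancy is absorbed by the rescaling device of Remark \ref{rescalingremark} applied to $\mathcal{B}$ (replacing $\mathcal{B}$ by $R^{-d\epsilon/2}\mathcal{B}$), at the cost of an overall $R^{d\epsilon}$ factor in the final bound.

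With these two ingredients in place, the rest of the proof is a direct transcription of the dyadic decomposition and chaining scheme following Proposition \ref{agmonhormander}: I decompose by dyadic levels $l_{j,k}$ of $|Eg_{i_1,\dots,i_l}(x_k')|^2$, use Bennett's inequality via Corollary \ref{bennetcorollary} for tail control of $\sum_{i\in I(g)}\delta_i$ on the effective index sets $I(g)$, and then run the probabilistic construction of a $2^{-k-1}$-separated sequence $\{g_u\}\subset(\mathcal{B},\norm{\cdot}_\sim)$ as in Proposition \ref{entropybound}. The Maurey--Artstein covering bound forces the length of this sequence to be $\lesssim R^{O(1/\epsilon_0^2)}$, which in turn produces the required decay of $\alpha_t$ and a $(\log R)^{O(1)}$ bound on the rescaled expectation. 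Unwinding the rescaling introduces the anticipated $R^{d\epsilon}$ loss, and finally multiplying by $|\alpha_k'|\sim R^{-d\epsilon}$ converts the pointwise bound into the integral form stated in the proposition, giving $\lesssim_\epsilon R^\epsilon$ after relabeling $\epsilon$.

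The main obstacle I anticipate is ensuring that the locally constant property transfers cleanly from $Eg$ to the modified operators $E\widetilde g$ and is quantitatively strong enough to relate pointwise values at the $R^{-\epsilon}$-separated centers $x_k'$ to a genuine integral over $B_R$ without costing more than the expected $R^{d\epsilon}$ factor; this is precisely where the polynomial-in-$\ell$ bound on the moment factors $\omega_{i_1}\cdots\omega_{i_l}$ (with $\ell\le 10000d/\epsilon$) must be carefully tracked. Everything else is bookkeeping on the Section \ref{sectionweakproof} argument, and since $\log N\sim\log R$ all the polynomial losses conspire to give $R^{O(\epsilon)}$, which is good enough by the standard $\epsilon$-choice convention.
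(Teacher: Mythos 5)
Your proposal follows the paper's proof of Proposition \ref{mainproppointwise} essentially exactly: the same test functions (your $y_k$ are the paper's $\tilde{\alpha}_k'$ rescaled by $\abs{\alpha_k'}$, which is harmless), the same Maurey plus Artstein--Avidan--Milman covering bound, the same Agmon--H\"ormander input, and the same rerun of the Proposition \ref{mainprop} chaining scheme after rescaling $\mathcal{B}$. The only step to tighten is the claim $\abs{E\tilde g(x_k')}^2 \lesssim \int_{B(x_k',1)}\abs{E\tilde g}^2$: the locally constant property yields a rapidly decaying weight rather than a sharp cutoff, so the tail contribution from $\mathbb{R}^d\setminus B_{2R}$ must be estimated separately, which is precisely what Lemma \ref{poissonsummationsub} does.
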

            \begin{proof}
                First note that since the expression for $Eg_{i_1,i_2,...,i_l}(x_k')$ does not depend on the spatial variable $x$, by Fubini we have that $\abs{\int_{\alpha_{k}'} Eg_{i_1,i_2,...,i_l}(x_k')}^2 \sim R^{-\epsilon d} \abs{Eg_{i_1,i_2,...,i_{l}}(x_k')}^2 \sim \int_{\alpha_k'}\abs{Eg_{i_1,i_2,...,i_{l}}(x_k')}^2dx$. Therefore the problem is reduced to showing the equivalent statement that
                \begin{equation}\label{modifiedmainprop}
                    \mathbb{E} \sup_{g \in B}  \sum_{k=1}^{N} \delta_k \abs{\int_{\alpha_{k}'} Eg_{i_1,i_2,...,i_l}(x_k')dx}^2 \lesssim_{\epsilon} R^{\epsilon}.
                \end{equation}
                The key point is that one can control this expression in the same way one proves Proposition \ref{mainprop}. In particular, one shows that (\ref{modifiedmainprop}) is true by modifying the proof of Proposition \ref{mainprop} in the following ways:
                \begin{enumerate}
                    \item The role of $\tilde{\alpha_k}(\omega)$ is played by  (real and imaginary parts of) function  the $\tilde{\alpha_k'}(\omega) = \int_{\alpha_k'}e^{2 \pi i \omega \cdot x_{k}'} \omega_{i_1} \omega_{i_2} ... \omega_{i_l} dx$. Note in particular that $\norm{\tilde{\alpha_k'}(\omega)}_{L^2(\Sigma)} \lesssim 1$.
                    \item A suitable analogue of (\ref{averagecontrol}) is needed. This is given by Lemma \ref{poissonsummationsub} in the appendix. In particular one has that when $\norm{g}_{L^2(\Sigma)} \leq 1$, then
                    \begin{align*}
                        \sum_{k'}\abs{Eg_{i_1,i_2,...,i_l}(x_k')}^2 &\lesssim_{\epsilon} \int_{B_{2R}} \abs{Eg_{i_1,i_2,...,i_l}(x)}^2dx + R^{-1000}\norm{g}_{L^2(\Sigma)}^2\\
                        &\lesssim  R^{1+ \epsilon d} \int_{\Sigma}\abs{g(\omega)}^2 d\sigma(\omega) + R^{-1000}\norm{g}_{L^2(\Sigma)}^2\\
                        &\lesssim_{\epsilon} R^{1+\epsilon d}
                    \end{align*}
                    where the second inequality follows from Proposition \ref{agmonhormander}.
                    \item The other aspects of the proof mimic the proof of Proposition \ref{mainprop}. In particular our definition of the selector random variables $\delta_k$ does not change, however, in order to obtain the $\lesssim_{\epsilon} R^{\epsilon}$ factor in (\ref{modifiedmainprop}), one can rescale $\mathcal{B}$ by a factor of $R^{-\epsilon}$ (see Remark \ref{rescalingremark}), and proceed in the same way we prove Proposition \ref{mainprop}.
                \end{enumerate}
            \end{proof}
            We are now ready to prove the main technical theorem of this paper:
                
            \begin{proof}[Proof of Proposition \ref{technicalrefinedprop}:]
            Recall that we need to show that 
            \begin{equation*}
                    \mathbb{E} \sup_{g \in \mathcal{B}} \sum_{k=1}^{CR^d} \delta_k \int_{\alpha_k} \abs{Eg(x)}^2dx \lesssim_{\epsilon, d} R^{\epsilon}
                \end{equation*}
                Decompose each $\alpha_k$ into $ \sim R^{\epsilon}$ balls $\{\alpha_{k,j}\}_{j \in J}$, with $\mathrm{card}(J) \sim R^{\epsilon}$ let $\{x_{j,k}\}_{j \in J}$ be the centers of these balls. We can now write
                \begin{align}
                    \mathbb{E} \sup_{g \in \mathcal{B}} \sum_{k=1}^{CR^d} \delta_k \int_{\alpha_k} \abs{Eg(x)}^2dx &= \mathbb{E} \sup_{g \in \mathcal{B}} \sum_{k=1}^{CR^d} \sum_{j \in J} \delta_k \int_{\alpha_{k,j}}\abs{Eg(x)}^2dx\\ 
                    & \leq \sum_{j \in J} \mathbb{E} \sup_{g \in \mathcal{B}} \sum_{k=1}^{CR^d} \delta_k \int_{\alpha_{k,j}}\abs{Eg(x)}^2dx \label{sumj}
                \end{align}
                We estimate the inner sum for each $j$ by using the bounds in (\ref{egsquareusablebound}). Fix $j \in J$, we have
                \begin{align*}
                    &\mathbb{E} \sup_{g \in \mathcal{B}} \sum_{k=1}^{CR^d} \delta_k \int_{\alpha_{k,j}}\abs{Eg(x)}^2dx \lesssim_{\epsilon, d} \\
                    &\mathbb{E} \sup_{g \in \mathcal{B}} \sum_{k=1}^{CR^d} \delta_k \sum_{l=0}^{\frac{10000d}{\epsilon}} \sum_{i_1,i_2,...,i_l} \frac{(2\pi R)^{-2\epsilon l}}{(l!)^2}\int_{\alpha_{k,j}}\abs{Eg_{i_1,i_2,...,i_l} (x_{j,k})}^2dx + O(R^{-1000})
                \end{align*}
                Interchanging the inner two summands with the beginning summand in the previous expression, along with interchanging the $\sup$ with positive summands yields that the previous expression can be bounded by:
                \begin{equation*}
                    \sum_{l=0}^{\frac{Cd}{\epsilon}} \sum_{i_1,i_2,...,i_l} \frac{(2\pi R)^{-2\epsilon k}}{(l!)^2
                    }\mathbb{E} \sup_{g \in \mathcal{B}} \sum_{k=1}^{CR^d} \delta_k \int_{\alpha_{k,j}}\abs{Eg_{i_1,i_2,...,i_l} (x_{j,k})}^2dx +O(R^{-1000})
                \end{equation*}
                Proposition \ref{modifiedmainprop} gives us a good estimate for the inner term (noting that $\{x_{j,k}\}$ is a $\sim R^{-\epsilon}$ separated subset of $B_R$ for fixed $j$):
                \begin{equation*}
                    \mathbb{E} \sup_{g \in \mathcal{B}} \sum_{k=1}^{CR^d} \delta_k \int_{\alpha_{k,j}}\abs{Eg_{i_1,i_2,...,i_l} (x_{j,k})}^2dx \lesssim_{\epsilon, d} R^{d\epsilon}.
                \end{equation*}
                We also have a crude bound for the outer sum (by bounding the numerator by $1$).
                \begin{equation*}
                    \sum_{l=0}^{\frac{10000d}{\epsilon}} \sum_{i_1,i_2,...,i_l} \frac{(2\pi R)^{-2\epsilon k}}{(l!)^2
                    } \lesssim_{\epsilon,d} 1.
                \end{equation*}
                Combining this with (\ref{sumj}), and noting that $\mathrm{card}(J) \sim R^{\epsilon}$ completes the proof.
            \end{proof}
        
    \section{Probabilistic Estimates}\label{probestimatesection}
    \subsection{Proof of the Main Theorem (Theorem \ref{maintheoreminformal})}
          We now show that our construction of (random) weights has the property that they have large mass and low tube occupancy, completing the proof of Theorem \ref{maintheoreminformal}.
\begin{lemma}\label{bernoullilargedevbound}
    Given our definition of $\{\delta_k\}$, we define the (random) set $S = \{k: \delta_k =1\}$. We define the (random) weight $w$ as $\sum_{k \in S} \delta_k  \mathbbm{1}_{\alpha_k} $. With this, with high probability the following two properties hold:
    \begin{enumerate}
        \item $\norm{w}_{L^1(B_R)} \sim R^{d-1}$
        \item $\sup_{T \in \mathbb{T}} w(T) \lesssim_{\epsilon} R^{\epsilon}$
    \end{enumerate}
    \begin{proof}
        Property 1 follows from the fact that $\mathbb{E}(\abs{S}) \sim R^{d-1}$. Focusing on property 2, we first note that there are $ \sim R^{2(d-1)}$ distinct 1 - tubes covering $B_R \subseteq \R^d$, and $\sim R$ many unit balls $\alpha_k \subset T$. We find that given a 1-tube $T$, we can estimate the probability that $w(T)$ exceeds some threshold $t$, by the standard Chernoff bound for large deviation estimates for Binomial random variables (see \cite{vershynin2018high}, Theorem 2.3.1).
        \begin{equation*}
            \mathbb{P}(w(T) > t) = \mathbb{P}\left(\sum_{\alpha_k \subset T}   \delta_k > t \right) \lesssim \left(\frac{C}{t}\right)^{t},
        \end{equation*}
        therefore,
        \begin{align*}
            \mathbb{P}\left(\sup_{T \in \mathbb{T}} w(T)> \log R \right) &= \mathbb{P}\left( \bigcup_{T \in \mathbb{T}} \{w(T)>\log R \}\right)\\ 
            & \leq \sum_{T \in \mathbb{T}}\mathbb{P}\left(w(T) > \log R \right)\\
            &\lesssim R^{2(d-1)}\left(\frac{C}{\log R}\right)^{\log R} \rightarrow 0.
        \end{align*}
        Thus, with very high probability, $\mathbb{P}(\sup_{T \in \mathbb{T}} w(T) \leq \log R)$ which establishes property 2. 
        \end{proof}
        \end{lemma}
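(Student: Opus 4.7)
Both properties follow from standard concentration inequalities for sums of independent Bernoulli random variables; the second additionally requires a discretization of the (continuous) family of 1-tubes before a union bound can be applied. I would handle the two parts separately.

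For property 1, observe that $\norm{w}_{L^1(B_R)} = \sum_{k=1}^{CR^d} \delta_k \abs{\alpha_k}$ is comparable to $\abs{S} = \sum_{k} \delta_k$, a sum of $CR^d$ i.i.d.\ Bernoulli random variables of parameter $\delta \sim 1/R$. Its mean is $\sim R^{d-1}$, and a standard Chernoff bound for binomial sums gives concentration around the mean within a constant multiplicative factor with probability at least $1 - \exp(-cR^{d-1})$. This immediately yields $\norm{w}_{L^1(B_R)} \sim R^{d-1}$ with very high probability.

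For property 2, I would first discretize the collection of 1-tubes meeting $B_R$. An $O(1/R)$-net of directions on $S^{d-1}$ (of cardinality $\sim R^{d-1}$) combined with an $O(1)$-net of transverse offsets intersected with $B_R$ for each direction (also of cardinality $\sim R^{d-1}$) produces a finite family of $\sim R^{2(d-1)}$ representative tubes, with the property that every 1-tube meeting $B_R$ is contained in a constant fattening of some representative. The angular scale $1/R$ is chosen because at length $R$ it perturbs a line by at most $O(1)$ transversally, so passing from $\sup_{T \in \mathbb{T}}$ to the sup over this finite family loses only a constant factor. Next, for each fixed representative tube $T$, I would count the balls $\alpha_k$ meeting $T$: since $T \cap B_R$ has volume $O(R)$ and the $\alpha_k$ are boundedly overlapping unit balls, there are at most $O(R)$ such indices. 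Hence $w(T) \lesssim \sum_{\alpha_k \cap T \neq \emptyset} \delta_k$ is a sum of $O(R)$ i.i.d.\ Bernoulli$(O(1/R))$ random variables with mean $O(1)$. A Chernoff-type upper tail bound for the binomial distribution yields $\mathbb{P}(w(T) > t) \leq (C/t)^t$ for all sufficiently large $t$. A union bound over the $\sim R^{2(d-1)}$ representatives then gives
\[
\mathbb{P}\Big(\sup_{T \in \mathbb{T}} w(T) > \log R\Big) \lesssim R^{2(d-1)} \left(\frac{C}{\log R}\right)^{\log R},
\]
which tends to $0$ because $(\log R)^{\log R} = R^{\log \log R}$ grows faster than any fixed polynomial in $R$. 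Since $\log R \lesssim_{\epsilon} R^{\epsilon}$, this proves property 2.

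The main technical point I expect to have to handle carefully is the tube discretization: one must verify that the finite net of tubes suffices to control the supremum over all 1-tubes without losing more than a constant factor in $w(T)$. The per-tube Chernoff bound and the final union-bound computation are routine, and property 1 is essentially immediate from the law of large numbers combined with Chernoff.
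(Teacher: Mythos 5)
Your proof follows essentially the same route as the paper: Chernoff bounds for property 1, and for property 2 a count of $\sim R^{2(d-1)}$ representative tubes, a per-tube Chernoff tail bound $\mathbb{P}(w(T)>t) \lesssim (C/t)^t$, and a union bound yielding $R^{2(d-1)}(C/\log R)^{\log R} \to 0$. The one place you go beyond the paper is the tube discretization: the paper simply asserts that there are $\sim R^{2(d-1)}$ tubes to union over, whereas you construct an explicit $O(1/R)$-net of directions and $O(1)$-net of offsets and justify that every 1-tube is captured (up to a bounded dilate) by some representative, so the supremum over all of $\mathbb{T}$ is controlled with only a constant loss — this is a genuine gap in the paper's exposition that your write-up correctly identifies and fills. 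You also correctly use $\alpha_k \cap T \neq \emptyset$ rather than the paper's $\alpha_k \subset T$, which is the right condition since $w(T) = \sum_k \delta_k |\alpha_k \cap T|$; this does not change the $O(R)$ count.
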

        To summarize, with high probability, our (randomly constructed) weight $w$ has the following properties:
        \begin{enumerate}
            \item $\int_{B_R}\abs{Eg(x)}^2 w(x) dx \lesssim_{\epsilon,d} R^{\epsilon}\int_{\Sigma}\abs{g(\omega)}^2 d\sigma$, uniformly for all $g \in L^2(\Sigma)$.
            \item $\sup_{T \in \mathbb{T}} w(T) \lesssim_{\epsilon} R^{\epsilon}$.
            \item $\norm{w}_{1} \sim R^{d-1}$.
        \end{enumerate}
        which concludes the proof of Theorem \ref{maintheoreminformal} that with high probability our weights satisfy sharp Mizohata-Takeuchi type estimates (up to an $R^{\epsilon}$ loss, which are now known to be the best possible Mizohata-Takeuchi type estimate one can hope for, see \cite{cairo2025counterexample}).
        
    \subsection{Comparison with Carbery's Random Weight Model}\label{carberymodelrandomness}
    
    In \cite{carbery2009large}, Carbery comes up with a different random construction of a weight $\tilde{w}$ with $supp(\tilde{w}) \subseteq B_R$, that is different than ours. In this section we show that the relevant Mizohata-Takeuchi type estimates that hold for our random construction also hold for Carbery's construction. 

    Carbery's combinatorial construction of a random weight $\tilde{w}$ is (essentially) as follows:
    \begin{enumerate}
        \item Among all $\{\alpha_k\}_{k \in [CR^d]}$,  (recall here that the $\alpha_k$'s are a collection of unit balls covering $B_R$), choose $R^{d-1}$ balls $\tilde{w}_1,\tilde{w}_2,...\tilde{w}_{R^d}$ uniformly at random (without replacement).
        \item Set $\tilde{w}(x) = \sum_{j=1}^{R^{d-1}}\mathbbm{1}_{\tilde{w}_j}(x)$
    \end{enumerate}
    With this construction, in \cite{carbery2009large} Carbery showed that with high probability $ \geq \frac{1}{2}$, that $\sup_{T \in \mathbb{T}}w(T) \lesssim \log R$, thus constructing a large collection of weights that have large mass and low tube occupancy. We note that one of the key reasons for choosing to construct these random weights $w$ in this particular way is that the paper \cite{carbery2009large} attempts to analyze how large $\norm{w}_{1}$ can be if $\sup_{T \in \mathbb{T}}w(T) \sim 1$ (this is related to an open problem known as the $N-$ set occupancy problem; see, for example, \cite{demeter2024nset}). The result that $\sup_{T \in \mathbb{T}}w(T) \lesssim \log(R)$ for this particular random construction is only a special case of some more refined analysis\footnote{In fact, using the same combinatorial Bernoulli trial model one can construct weights for which $\norm{w}_1 \lesssim R^{d-1} \log R$ and $\sup_{T \subseteq \mathbb{T}}w(T) \lesssim \log R$. Our analysis in the forthcoming theorem does not change much with the extra $\log R$ factor added to the mass of the weight, and since we are interested in proving sharp Mizohata-Takeuchi type estimates up to an $R^{\epsilon}$ loss, we consider the model with $\norm{w}_1 \sim R^{d-1}$. } in \cite{carbery2009large}. 
    We prove the following: 
    \begin{proposition}\label{carberycouplingprop}
        Let $\tilde{w}$ be defined as above, then\footnote{We would like to acknowledge Open AI's Chat GPT o3 mini model for some useful suggestions in proving this proposition.}
        \begin{equation}\label{carberycouplingfinal}
            \mathbb{E}\sup_{\norm{g}_{{L^2}(\Sigma) \leq 1}} \int_{B_R} \abs{Eg(x)}^2\tilde{w}(x)dx \lesssim R^{\epsilon}.
        \end{equation}
        \begin{proof}
            The main point here is that while our random weight $w$ and Carbery's $\tilde{w}$ are constructed differently, there are very few instances where a ball unit ball $\alpha_k \subseteq B_R$ is chosen twice (or more than a constant number of times). Therefore the distributions of the weights are more or less comparable.

            Write $\tilde{w}(x) = \sum_{k=1}^{CR^d}\tilde{w_k}\mathbbm{1}_{\alpha_k}(x)$, where each $\tilde{w_k} \sim \text{Binomial}(R^{d-1}, \frac{1}{CR^{d}})$, and the $\tilde{w_k}$ are mutually independent. We may assume that $2d \leq R$ (if this is not the case, the quantity (\ref{carberycouplingfinal}) is otherwise bounded by constant depending only on $d$ which is acceptable). Let $\overline{w_k}= \min\{\tilde{w_k}, 2d\}$. Now we write (\ref{carberycouplingfinal}) as\begin{align*}&\mathbb{E}\sup_{\norm{g}_{L^2(\Sigma)}}\sum_{k}\tilde{w_k}\int_{\alpha_k}\abs{Eg(x)}^2dx \\ & \leq \mathbb{E}\sup_{\norm{g}_{L^2(\Sigma)} \leq 1}\sum_{k}\overline{w_k}\int_{\alpha_k}\abs{Eg(x)}^2dx + \mathbb{E} \sup_{\norm{g}_{L^2(\Sigma)} \leq 1}\sum_{k: \tilde{w_k}>2d} (\tilde{w_k}-2d) \int_{\alpha_k}\abs{Eg(x)}^2dx
            \end{align*}
            We estimate the the two terms separately. For the first term, let $S = \{k: \tilde{w_k}\geq 1\}$. Note that for any $k$, $\mathbb{P}(k \in S) \leq \mathbb{E}(\tilde{w_k}) = O(\frac{1}{R})$ by Markov's inequality. Moreover, the events $A_k =\{k \in S\}$ are mutually independent for all $k$, and thus the random variables $\mathbbm{1}_{A_k}$ are i.i.d. Bernoulli random variables with mean $O(\frac{1}{R})$. Now the first term above can be estimated as follows: 
            \begin{align*}
                \mathbb{E}\sup_{\norm{g}_{L^2(\Sigma) \leq 1}}\sum_{k}\overline{w_k}\int_{\alpha_k}\abs{Eg(x)}^2dx &\leq 2d \cdot \mathbb{E}\sup_{\norm{g}_{L^2(\Sigma) \leq 1}}\sum_{k} \mathbbm{1}_{A_k}\int_{\alpha_k}\abs{Eg(x)}^2dx \\
                &\sim_{d} \mathbb{E}\sup_{\norm{g}_{L^2(\Sigma) \leq 1}}\sum_{k} \delta_k \int_{\alpha_k}\abs{Eg(x)}^2dx \\
                &\lesssim_{\epsilon, d} R^{\epsilon}.
            \end{align*}
            where the last line follows from our main proposition, Proposition \ref{modifiedmainprop}. We turn to estimating the second term. By the Agmon--H\"ormander trace inequality we have the crude estimate that $\int_{\alpha_{k}}\abs{Eg(x)}^2dx \leq \int_{B_R}\abs{Eg(x)}^2dx \lesssim R \norm{g}_{L^2(\Sigma)}^2$. As a consequence,
            \begin{align*}
                &
                \\
                & \leq R \cdot \mathbb{E}\left(\sum_{k:w_k>2d} (\tilde{w_k}-2d)\right)\\
                &\leq R \cdot \mathbb{E}\left(\sum_{k: \tilde{w_k}>2d}\tilde{w_k}\right).
            \end{align*}
            
            We turn to estimating the term $\mathbb{E}\left(\sum_{k: \tilde{w_k} >2d} \tilde{w_k}\right)$, which by linearity of expectation can be rewritten as $\sum_{k=1}^{CR^d}\mathbb{E}(\tilde{w_k} \mathbbm{1}_{\{\tilde{w_k}>2d\}})$. Estimating one of the terms in this sum we have
            \begin{align*}
                \mathbb{E}(\tilde{w_k}\mathbbm{1}_{\{\tilde{w_k}>2d\}}) &= \sum_{j=2d+1}^{R^{d-1}}j\mathbb{P}(\tilde{w_k}=j)\\ 
                &=\sum_{j=2d+1}^{R^{d-1}} j\binom{R^{d-1}}{j}\left(\frac{1}{CR^d}\right)^j\left( 1-\frac{1}{CR^d}\right)^{R^{d-1}-j}\\
                &\lesssim \sum_{j=2d+1}^{R^{d-1}}j \left(\frac{R^{d-1}}{j}\right)^{j}\left(\frac{1}{CR^{d}}\right)^j\\
                & \lesssim \sum_{j=2d+1}^{R^{d}-1} \frac{1}{j^{j-1}R^{j}}\\
                &= \frac{1}{R^{2d}} \sum_{j=2d+1}^{R^d-1}\frac{1}{j^{j-1}R^{j-2d}}\\
                & \lesssim \frac{1}{R^{2d}}.
            \end{align*}
            Adding all of these terms together we get that
            \begin{equation*}
                \mathbb{E}\left(\sum_{k: \tilde{w_k} >2d} w_k \right) = \sum_{k=1}^{CR^d}\mathbb{E}(\tilde{w_k} \mathbbm{1}_{\{\tilde{w_k}>2d\}}) \lesssim CR^{d}\cdot\frac{1}{R^{2d}} \sim \frac{1}{R^d}.
            \end{equation*}
            As a consequence, 
            \begin{equation*}
                \mathbb{E} \sup_{\norm{g}_{L^2(\Sigma)} \leq 1} \sum_{k: \tilde{w_k}>2d} (\tilde{w_k}-2d)\int_{\alpha_k}\abs{Eg(x)}^2dx \lesssim \mathbb{E}\left(\sum_{k: \tilde{w_k}> 2d } \tilde{w_k} \right) \lesssim R \cdot \frac{1}{R^d} \lesssim 1.
            \end{equation*}
            Combined with our previous result this proves Theorem \ref{carberycouplingfinal}.
        \end{proof}
    \end{proposition}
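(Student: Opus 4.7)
The plan is to leverage the main theorem for our Bernoulli selector weight (Proposition \ref{technicalrefinedprop}) by comparing Carbery's random weight $\tilde{w}$ to it directly. First I would rewrite $\tilde{w}(x) = \sum_{k=1}^{CR^d} \tilde{w}_k \mathbbm{1}_{\alpha_k}(x)$, where $\tilde{w}_k$ counts how many times ball $\alpha_k$ is picked during Carbery's procedure. The crucial observation is that each $\tilde{w}_k$ is (essentially) a $\mathrm{Binomial}(R^{d-1}, 1/CR^d)$ random variable with mean $O(1/R)$, which matches the Bernoulli selection parameter $\delta$ used in the main construction.

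The immediate obstacle is that $\tilde{w}_k$ can exceed $1$, whereas the selectors $\delta_k$ of the main theorem cannot. To address this I would truncate at a height depending only on the dimension: set $\overline{w}_k = \min(\tilde{w}_k, 2d)$ and decompose
\begin{equation*}
\int_{B_R}|Eg|^2 \tilde{w}\, dx = \sum_k \overline{w}_k \int_{\alpha_k}|Eg|^2 dx + \sum_{k : \tilde{w}_k > 2d}(\tilde{w}_k - 2d)\int_{\alpha_k}|Eg|^2 dx.
\end{equation*}
For the truncated main term, I would use $\overline{w}_k \leq 2d \cdot \mathbbm{1}_{\{\tilde{w}_k \geq 1\}}$, observing that the events $\{\tilde{w}_k \geq 1\}$ behave like mutually independent Bernoulli trials with success probability $\leq \mathbb{E}\tilde{w}_k = O(1/R)$. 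Pulling out the constant $2d$, this piece is directly controlled by Proposition \ref{technicalrefinedprop}, producing the desired $R^{\epsilon}$ bound up to a constant depending on $d$.

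For the error term coming from $\{\tilde{w}_k > 2d\}$, I would apply the crude Agmon--H\"ormander bound $\int_{\alpha_k}|Eg|^2 dx \leq \int_{B_R}|Eg|^2 dx \lesssim R \|g\|_{L^2(\Sigma)}^2$ to reduce the question to showing
\begin{equation*}
R \cdot \mathbb{E}\sum_{k:\tilde{w}_k > 2d} \tilde{w}_k \lesssim 1.
\end{equation*}
This should follow from an elementary binomial tail calculation: using Stirling-type estimates on $\binom{R^{d-1}}{j}(CR^d)^{-j}$ for $j > 2d$ and summing the geometric tail, each term $\mathbb{E}(\tilde{w}_k \mathbbm{1}_{\{\tilde{w}_k > 2d\}})$ ought to be of order $R^{-2d}$, and summing over the $\sim R^d$ balls multiplied by the Agmon--H\"ormander factor $R$ leaves $O(R^{1-d})$, which is acceptable.

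The main obstacle I anticipate is choosing the truncation threshold: it must be large enough that the binomial tail is summable against the crude $R$ factor coming from Agmon--H\"ormander, while small enough that the constant multiplying the main term (from $\overline{w}_k \leq 2d \cdot \mathbbm{1}_{\{\tilde{w}_k \geq 1\}}$) remains harmless in Proposition \ref{technicalrefinedprop}. The threshold $2d$ works precisely because $(R^{d-1}/(CR^d))^j = (CR)^{-j}$ decays fast enough in $j$ to overwhelm the $\binom{R^{d-1}}{j}$ growth once $j \geq 2d$; verifying that this balance holds uniformly in $R$ is the only technical point that is not routine.
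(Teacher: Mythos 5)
Your proposal follows essentially the same path as the paper's proof: the same rewriting of $\tilde{w}$ via the binomial counts $\tilde{w}_k$, the same truncation at threshold $2d$, the same replacement $\overline{w}_k \leq 2d\,\mathbbm{1}_{\{\tilde{w}_k\geq 1\}}$ to invoke the main proposition for the truncated part, and the same Agmon--H\"ormander-plus-binomial-tail estimate for the excess. The argument is correct and matches the paper's in all its essentials.
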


\newpage        
\section{Appendix}
In this appendix we record some supplementary lemmas and facts that are used in this paper, both in probability and Fourier analysis.
\subsection{Some Useful Probability Results}
\begin{lemma}\label{indepexpectation}
    Let $Z_1$ and $Z_2$ be two independent and identically distributed $\mathcal{H}$- valued random variables that take finitely many values with mean $0$. Then, $\mathbb{E}\inp{Z_1}{Z_2} = 0$.
    \begin{proof}
        Fix any $w \in \mathcal{H}$. We first show that $\mathbb{E}\inp{Z_1}{w} = 0$ for all $w \in \mathcal{H}$. Since $Z_1$ has finite support we may assume that $Z_1$ has the following distribution:
        \begin{align*}
            \mathbb{P}(Z_1 = x_k) = \alpha_k
        \end{align*}
        for some $x_1, x_2,...,x_n \in \mathcal{H}$ and $\alpha_1,...,\alpha_n \in \R^{+}$ with $\sum_{k=1}^{n}\alpha_k = 1$. Moreover, since $\mathbb{E}Z_1 = 0$, we have that $\sum_{k=1}^{n}\alpha_k x_k = 0$. Now the ($\mathbb{R}$ - valued) random variable  $\inp{Z_1}{w}$ has the distribution given by:
        \begin{equation*}
            \mathbb{P}(\inp{Z_1}{w}=\inp{x_k}{w}) = \alpha_k.
        \end{equation*}
        Thus, $\mathbb{E} \inp{Z_1}{w} = \sum_{k=1}^{n}\inp{x_k}{w}\mathbb{P}(\inp{Z_1}{w} = \inp{x_k}{w}) = \sum_{k=1}^{n} \alpha_k \inp{x_k}{w} = \inp{\sum_{k=1}^{n}\alpha_k x_k}{w} = \inp{0}{w} = 0$. 
    We now prove the lemma by conditioning on $Z_2$ by using the tower property for expectations.
    \begin{align*}
        \mathbb{E}\inp{Z_2}{Z_1} &= \mathbb{E}_{Z_1} \mathbb{E}_{Z_2} (\inp{Z_1}{Z_2} | Z_2)\\
        &= \mathbb{E}_{Z_2} (0)\\
        &=0
    \end{align*}
    The first line follows from the tower property of conditional expectations and the independence of $Z_1$ and $Z_2$. The second line follows from the fact that $\mathbb{E}\inp{Z_1}{w} =0$ for every $w \in \mathcal{H}$. This proves the claim.
    \end{proof}
\end{lemma}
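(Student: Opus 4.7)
The plan is to exploit bilinearity of the inner product together with independence, reducing the claim to the observation that $\mathbb{E}\inp{Z_1}{w}$ vanishes for every deterministic $w \in \mathcal{H}$. Because both random variables take only finitely many values, there are no integrability subtleties and every exchange of sum and expectation is routine.

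First I would record the intermediate claim: for any deterministic $w \in \mathcal{H}$, $\mathbb{E}\inp{Z_1}{w} = 0$. This is essentially just pulling the expectation through the inner product in its linear slot. Writing $\mathbb{P}(Z_1 = x_k) = \alpha_k$ for $k=1,\dots,n$, so that $\sum_{k} \alpha_k x_k = \mathbb{E}Z_1 = 0$ by hypothesis, one computes directly $\mathbb{E}\inp{Z_1}{w} = \sum_{k} \alpha_k \inp{x_k}{w} = \inp{\sum_{k} \alpha_k x_k}{w} = \inp{0}{w} = 0$.

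The main step is to combine this with independence via the tower property: $\mathbb{E}\inp{Z_1}{Z_2} = \mathbb{E}\bigl[\mathbb{E}[\inp{Z_1}{Z_2} \mid Z_2]\bigr]$. Independence of $Z_1$ and $Z_2$ implies that on the event $\{Z_2 = w\}$ the conditional distribution of $Z_1$ is just its marginal, so the inner conditional expectation equals $\mathbb{E}\inp{Z_1}{w}$, which is zero by the previous step. Taking the outer expectation yields $0$. There is essentially no obstacle in this proof; the only point to be careful about is that conditioning on a random vector $Z_2$ taking finitely many values amounts to a finite sum over the events $\{Z_2 = x_j\}$, on each of which independence lets us treat $Z_2$ as deterministic. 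An equally valid alternative route would be to write the joint distribution directly as $\mathbb{E}\inp{Z_1}{Z_2} = \sum_{k,j} \alpha_k \alpha_j \inp{x_k}{x_j} = \inp{\mathbb{E}Z_1}{\mathbb{E}Z_2} = 0$, using independence to factor $\mathbb{P}(Z_1 = x_k, Z_2 = x_j) = \alpha_k \alpha_j$ and then bilinearity of $\inp{\cdot}{\cdot}$ to pull both sums inside.
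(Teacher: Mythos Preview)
Your proof is correct and follows essentially the same route as the paper: first establish $\mathbb{E}\inp{Z_1}{w}=0$ for deterministic $w$ via linearity of the inner product and $\mathbb{E}Z_1=0$, then condition on $Z_2$ using independence and the tower property. Your added remark that one could instead expand $\mathbb{E}\inp{Z_1}{Z_2}=\sum_{k,j}\alpha_k\alpha_j\inp{x_k}{x_j}=\inp{\mathbb{E}Z_1}{\mathbb{E}Z_2}$ directly is a nice alternative that the paper does not mention.
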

The following result first appeared in \cite{Talagrand1992}, but we include a short proof of this here for completeness.
\begin{proposition}[Variant of Bennett's inequality \cite{Talagrand1992}]
Consider a random variable $Z$ with $\abs{Z} \leq 1$ a.e, $\mathbb{E}Z =0$, and $E Z^2 \leq \delta$. Let $(Z_i)_{i=1}^{n}$ be a sequence of i.i.d. copies of $Z$, and let $a= (a_i)_{i=1}^{n}$ be a sequence of real numbers. Then for all $t>0$, we have the following large deviation inequality:
    \begin{equation*}
        \mathbb{P}\left(\sum_{k=1}^{n}a_kZ_k \geq t \right) \leq \exp (- \frac{t}{\norm{a}_{\infty}} \log \frac{t\norm{a}_{\infty}}{\delta \norm{a}_2^2}).
    \end{equation*}
\end{proposition}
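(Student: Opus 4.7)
The inequality is a Bernstein/Bennett-type concentration bound, so the natural approach is the classical exponential Chebyshev (Chernoff) method applied to $S = \sum_{k=1}^n a_k Z_k$. For every $\lambda > 0$ one writes
\begin{equation*}
\mathbb{P}(S \geq t) \;\leq\; e^{-\lambda t}\, \mathbb{E}\, e^{\lambda S} \;=\; e^{-\lambda t}\, \prod_{k=1}^n \mathbb{E}\, e^{\lambda a_k Z_k},
\end{equation*}
using that the $Z_k$ are i.i.d. copies of $Z$. The task then reduces to controlling the individual moment generating functions and choosing $\lambda$ optimally.

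The plan for the one-variable MGF bound is to expand the exponential in its Taylor series and exploit the three hypotheses on $Z$ in turn. Since $\mathbb{E} Z = 0$, the linear term vanishes, giving
\begin{equation*}
\mathbb{E}\, e^{\lambda a_k Z_k} \;=\; 1 + \sum_{j \geq 2} \frac{(\lambda a_k)^j}{j!}\, \mathbb{E}\, Z_k^j.
\end{equation*}
For $j \geq 2$, the pointwise bound $|Z_k|\leq 1$ implies $|Z_k|^j \leq Z_k^2$, so $|\mathbb{E} Z_k^j|\leq \mathbb{E} Z_k^2 \leq \delta$. Factoring out $\lambda^2 a_k^2$ from the generic term and absorbing the remaining $|a_k|^{j-2}$ into $\|a\|_\infty^{j-2}$ gives, after summing in $j$,
\begin{equation*}
\mathbb{E}\, e^{\lambda a_k Z_k} \;\leq\; 1 + \frac{\delta\, a_k^2}{\|a\|_\infty^2}\bigl(e^{\lambda \|a\|_\infty} - 1 - \lambda \|a\|_\infty\bigr).
\end{equation*}
Using $1 + x \leq e^x$ and multiplying over $k$, I obtain
\begin{equation*}
\mathbb{E}\, e^{\lambda S} \;\leq\; \exp\!\left( \frac{\delta\, \|a\|_2^2}{\|a\|_\infty^2}\bigl(e^{\lambda \|a\|_\infty} - 1 - \lambda \|a\|_\infty\bigr)\right).
\end{equation*}

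With this in hand, I would optimize the resulting expression $-\lambda t + \frac{\delta \|a\|_2^2}{\|a\|_\infty^2}\bigl(e^{\lambda \|a\|_\infty} - 1 - \lambda \|a\|_\infty\bigr)$. Setting its derivative in $\lambda$ to zero yields the explicit minimizer $\lambda \|a\|_\infty = \log\!\bigl(1 + \tfrac{t \|a\|_\infty}{\delta \|a\|_2^2}\bigr)$, and substituting back gives the sharp Bennett expression $-\tfrac{t}{\|a\|_\infty}\, h\!\bigl(\tfrac{t\|a\|_\infty}{\delta\|a\|_2^2}\bigr)$, where $h(u) = (1 + 1/u)\log(1+u) - 1$.

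The main obstacle is the final simplification from this sharp Bennett form to the stated cleaner bound with $\log\!\bigl(\tfrac{t\|a\|_\infty}{\delta \|a\|_2^2}\bigr)$ in place of $h(u)$. This requires the elementary comparison $h(u) \geq \log u$ for $u$ bounded away from $0$ (equivalently, $\log(1+u) \geq \frac{u}{u+1}\log u + \frac{u}{u+1}$, which holds once $u$ is not too small), and is the reason the companion Corollary \ref{bennetcorollary} assumes $u \geq 2\delta\,\mathrm{card}(I)$. In the regime where the stated bound is meaningful, i.e.\ $t\|a\|_\infty \gtrsim \delta \|a\|_2^2$, this comparison is a short convexity/calculus check, after which the inequality takes exactly the form
\begin{equation*}
\mathbb{P}\!\left(\sum_{k=1}^n a_k Z_k \geq t\right) \;\leq\; \exp\!\left(-\frac{t}{\|a\|_\infty} \log \frac{t\|a\|_\infty}{\delta\|a\|_2^2}\right),
\end{equation*}
as claimed.
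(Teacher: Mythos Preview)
Your Chernoff skeleton and MGF bound are correct and yield the sharp Bennett exponent $-\tfrac{t}{\|a\|_\infty}\,h(u)$ with $u=\tfrac{t\|a\|_\infty}{\delta\|a\|_2^2}$ and $h(u)=(1+1/u)\log(1+u)-1$. The gap is in the last step: the comparison $h(u)\ge \log u$ that you appeal to is \emph{false} for large $u$ (indeed $h(u)-\log u\to -1$ as $u\to\infty$; already at $u=e$ one has $h(e)\approx 0.80<1=\log e$). What is true is $h(u)\ge c\log u$ for all $u\ge 1$ with some absolute $c<1$ (for instance $c=\tfrac14$ works), which gives the stated inequality with a harmless constant in the exponent. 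That constant is in fact what the argument is really producing: note that Corollary~\ref{bennetcorollary}, which is the only place the proposition is used, carries a factor $\tfrac18$ in the exponent.

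By comparison, the paper takes a slightly different route that avoids the Bennett function entirely. Instead of summing the full Taylor series, it uses the cruder pointwise inequality $e^x\le 1+x+\tfrac12 x^2 e^{|x|}$ to get $\mathbb{E}\,e^{\lambda a_kZ_k}\le \exp\bigl(\tfrac12\lambda^2 a_k^2\,\delta\,e^{\lambda|a_k|}\bigr)$, multiplies over $k$, and then applies the elementary bound $\lambda\le e^{\lambda\|a\|_\infty}/\|a\|_\infty$ once more to reach
\[
\mathbb{P}(S\ge t)\le \exp\!\Bigl(-\lambda t+\tfrac{\lambda\,\delta\|a\|_2^2}{2\|a\|_\infty}\,e^{2\lambda\|a\|_\infty}\Bigr).
\]
The explicit choice $\lambda=\tfrac{1}{2\|a\|_\infty}\log u$ then makes $e^{2\lambda\|a\|_\infty}=u$ and collapses the exponent directly to $-\tfrac{t}{4\|a\|_\infty}\log u$, with no need for the $h(u)$ versus $\log u$ comparison. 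Your approach is sharper at the MGF stage but pays for it with the extra comparison lemma; the paper's is coarser but lands on the desired logarithm in one stroke. Either way you end up with the same bound up to an absolute constant in the exponent, which is all that is used downstream.
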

\begin{proof}
We have by Chebyshev's inequality that for all $ \lambda> 0$:
\begin{align*}
    \mathbb{P}\left(\sum_{k=1}^{n}a_kZ_k \geq t \right) &= \mathbb{P}\left(\exp(\sum_{k=1}^{n}\lambda a_k Z_k) \geq \exp(\lambda t) \right)\\ 
    &\leq e^{-\lambda t}\prod_{k=1}^{n}\mathbb{E}\left(\exp(\lambda a_k Z_k)\right)\\
\end{align*}
Now observe that the inequality
\begin{equation}
    e^{x} \leq 1+x + \frac{1}{2}x^2e^{\abs{x}}
\end{equation}
is true for all $x \in \R$ (this becomes clear if you expand both sides as power series).
Thus,
\begin{equation*}
    \mathbb{E}(\exp(\lambda a_k Z_k)) \leq 1+ \frac{1}{2}\lambda^2 a_k^2 \delta e^{\lambda \abs{a_k}} \leq \exp\left(\frac{1}{2}\lambda^2 a_k^2 e^{\lambda \abs{a_k}}\right).
\end{equation*}
Plugging this into our original estimate gives us that
\begin{align*}
    \mathbb{P}\left(\sum_{k=1}^{n}a_kZ_k \geq t \right) &\leq \exp\left(-\lambda t + \frac{\lambda^2 \delta \norm{a}_2^2}{2}e^{\lambda \norm{a}_{\infty}} \right)\\
    &\leq \exp\left(-\lambda t + \frac{\lambda \delta \norm{a}_2^2}{2\norm{a}_{\infty}}e^{2\lambda \norm{a}_{\infty}} \right)
\end{align*}
(The last step is justified by noting that $x \leq e^x$ when $x>0$ so that $\frac{e^{\lambda \norm{a}_{\infty}}}{\norm{a}_{\infty}} \geq 1$.)
Setting
\begin{equation*}
    \lambda=\frac{1}{2 \norm{a}_{\infty}}\log\frac{t\norm{a}_{\infty}}{\delta \norm{a}_2^2},
\end{equation*}
proves the claim.
\end{proof}
\newpage
\subsection{Some Useful Results from Fourier Analysis}
We begin by collecting some standard `local constancy lemmas' that are used to supplement to proof of Proposition \ref{technicalrefinedprop}.
\begin{lemma}[Convolution Lemma]\label{localconstancyconv} 
        For all $x_0 \in \mathbb{R}$ we have $Eg_{i_1,...i_k} * \eta (x_0)$ where $\eta$ is a Schwartz function with the property that
        \begin{enumerate}
            \item $\hat{\eta} \equiv 1$ on $B(0,1) \subseteq \R^d$.
            \item $\hat{\eta} \equiv 0$ on $\R^d \setminus B(0,2)$
            \item $\eta(x) \leq C_N(1+\abs{x})^{-N}$ for all $N \in \mathbb{N}$.
        \end{enumerate}
        \end{lemma}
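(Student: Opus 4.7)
The statement is the standard ``Fourier support'' form of the uncertainty principle: since the (distributional) Fourier transform of $Eg_{i_1,\ldots,i_l}$ is supported on $\Sigma \subseteq B(0,1)$ and $\hat{\eta} \equiv 1$ on $B(0,1)$, multiplication by $\hat{\eta}$ on the Fourier side does nothing, which on the physical side is the identity $Eg_{i_1,\ldots,i_l}(x_0) = (Eg_{i_1,\ldots,i_l} * \eta)(x_0)$.

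The plan is to prove this directly, without invoking the distributional Fourier transform. First I would write out the convolution and apply Fubini:
\[
(Eg_{i_1,\ldots,i_l} * \eta)(x_0) = \int_{\R^d} \eta(x_0 - y) \left(\int_{\Sigma} g(\omega)\, \omega_{i_1}\cdots \omega_{i_l}\, e^{2\pi i \omega \cdot y}\, d\sigma(\omega)\right) dy.
\]
The interchange is justified because $\eta$ is Schwartz (so rapidly decaying in $y$), $\Sigma$ is compact with finite surface measure, and $|g(\omega)\omega_{i_1}\cdots \omega_{i_l}| \leq |g(\omega)|$ on $\Sigma$, which places the integrand in $L^1(\R^d \times \Sigma)$.

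Next I would compute the inner integral in $y$. After the change of variables $z = y - x_0$ this becomes
\[
\int_{\R^d} \eta(-z)\, e^{2\pi i \omega \cdot(z + x_0)}\, dz = e^{2\pi i \omega \cdot x_0}\, \hat{\eta}(\omega),
\]
under a suitable sign convention for the Fourier transform (the opposite convention gives $\hat{\eta}(-\omega)$, which makes no difference to the argument since properties $(1)$--$(2)$ of $\eta$ are symmetric in $\pm \omega$ if we take $\eta$ to be even, or otherwise one invokes the corresponding property of $\hat{\eta}$ on the image of $\Sigma$). Because $\omega \in \Sigma$ and $\Sigma$ lies in $B(0,1)$ by the normalization stated in the notation, property $(1)$ of $\eta$ gives $\hat{\eta}(\omega) = 1$.

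Substituting this back yields
\[
(Eg_{i_1,\ldots,i_l} * \eta)(x_0) = \int_{\Sigma} g(\omega)\, \omega_{i_1}\cdots \omega_{i_l}\, e^{2\pi i \omega \cdot x_0}\, d\sigma(\omega) = Eg_{i_1,\ldots,i_l}(x_0),
\]
as required. This lemma has no real obstacle; the only thing to be careful about is the Fubini step and the choice of Fourier convention. Property $(3)$ of $\eta$ plays no role here, but it will be used downstream to exploit the locally constant heuristic at unit scale by showing that the mass of $\eta$ is essentially concentrated on a unit ball.
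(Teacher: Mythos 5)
Your proof is correct and follows essentially the same route as the paper: expand the convolution, interchange the order of integration by Fubini, recognize the inner $y$-integral as $e^{2\pi i\omega\cdot x_0}\hat{\eta}(\omega)$, and use $\hat{\eta}\equiv 1$ on $\Sigma\subseteq B(0,1)$ to conclude. Your explicit justification of the Fubini step (integrability on $\R^d\times\Sigma$ via the Schwartz decay of $\eta$ and compactness of $\Sigma$) is a small, welcome addition that the paper omits, but the argument is the same.
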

        \begin{proof}
            The proof of the first two items is a straightforward consequence of the convolution property for Fourier transforms. Letting $\eta$ be a standard bump function that satisfies properties 1 and 2 above, we have
            \begin{align*}
                Eg_{i_1,i_2...,i_k} * \eta (x_0) &= \int_{\R^d} Eg(y) \eta (x_0-y)dy \\
                &=\int_{\R^d} g(\omega) \omega_{i_1}\omega_{i_2}...\omega_{i_k}  \int_{\Sigma} e^{2 \pi i \omega \cdot y} \eta (x_0-y) dy d\sigma(\omega)\\
                &= \int_{\mathbb{R^d}} g(\omega)  
                e^{2 \pi i \omega \cdot x_0}\omega_{i_1}\omega_{i_2}...\omega_{i_l} \int_{\Sigma}e^{- 2 \pi i \omega \cdot u} \eta(u) du d\sigma (\omega)\\
                &= \int_{\Sigma} g(\omega)  
                e^{2 \pi i \omega \cdot x_0}\omega_{i_1}\omega_{i_2}...\omega_{i_l} \hat{\eta}(\omega) d \sigma (\omega)\\
                &= \int_{\Sigma} g(\omega)  
                e^{2 \pi i \omega \cdot x_0}\omega_{i_1}\omega_{i_2}...\omega_{i_l} d \sigma (\omega)\\
                &= Eg_{i_1,i_2,...i_l}(x_0)
            \end{align*}
            where the third lines follows from Fubini's theorem, fifth line follows from the fact that $\hat{\eta}(\omega) \equiv 1$ for all $\omega \in \Sigma$, since $\Sigma \subseteq B(0,1)$. 

            The third property follows the fact $\eta$ is a Schwartz function (by virtue of being the Fourier transform of a Schwartz function.)
            \end{proof}

            \begin{lemma}[Local Constancy Lemma]\label{weightineq}
                For any $x_i \in B_R$ we have $\abs{Eg_{i_1,i_2,...i_l}(x_i)}^2 \lesssim \norm{Eg_{i_1,i_2,...,i_l}}^2_{L^{2}({w_{x_i}})}$, where $w_{x_i}: \mathbb{R}^d \rightarrow [0, \infty)$ is a weight with the property that $w_{x_i}(y) \leq C_N(1 + dist (y,x_i))^{-N}$ for each $N \in \mathbb{N}$.
            \end{lemma}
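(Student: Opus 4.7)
The plan is to apply the convolution identity from Lemma \ref{localconstancyconv} together with a Cauchy--Schwarz argument and the rapid decay of a suitable Schwartz function. Let $\eta$ be the Schwartz function furnished by Lemma \ref{localconstancyconv}; then by that lemma we have the pointwise identity
\begin{equation*}
    Eg_{i_1,\ldots,i_l}(x_i) = (Eg_{i_1,\ldots,i_l} * \eta)(x_i) = \int_{\R^d} Eg_{i_1,\ldots,i_l}(y)\, \eta(x_i - y)\, dy.
\end{equation*}

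Next, I would write $\abs{\eta(x_i-y)} = \abs{\eta(x_i-y)}^{1/2} \cdot \abs{\eta(x_i-y)}^{1/2}$ and apply Cauchy--Schwarz to the integral above, which yields
\begin{equation*}
    \abs{Eg_{i_1,\ldots,i_l}(x_i)}^2 \leq \left(\int_{\R^d} \abs{Eg_{i_1,\ldots,i_l}(y)}^2 \abs{\eta(x_i - y)}\, dy\right)\left(\int_{\R^d} \abs{\eta(x_i - y)}\, dy\right).
\end{equation*}
Since $\eta$ is Schwartz, $\int_{\R^d} \abs{\eta(x_i-y)} dy = \norm{\eta}_{L^1} \lesssim 1$ is an absolute constant. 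Defining $w_{x_i}(y) := \abs{\eta(x_i - y)}$ we obtain
\begin{equation*}
    \abs{Eg_{i_1,\ldots,i_l}(x_i)}^2 \lesssim \int_{\R^d} \abs{Eg_{i_1,\ldots,i_l}(y)}^2\, w_{x_i}(y)\, dy = \norm{Eg_{i_1,\ldots,i_l}}_{L^2(w_{x_i})}^2.
\end{equation*}

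The required decay on $w_{x_i}$ follows immediately from item (3) of Lemma \ref{localconstancyconv}: for any $N \in \N$,
\begin{equation*}
    w_{x_i}(y) = \abs{\eta(x_i - y)} \leq C_N (1 + \abs{x_i - y})^{-N} = C_N (1 + \mathrm{dist}(y, x_i))^{-N},
\end{equation*}
which is exactly the claimed property of $w_{x_i}$. There is no serious obstacle here: the essential content is packaged into Lemma \ref{localconstancyconv}, and the remaining work is a single Cauchy--Schwarz split exploiting the fact that a Schwartz function has finite $L^1$ norm and rapid polynomial decay. The only minor point to be careful about is choosing to put the full weight $\abs{\eta(x_i - \cdot)}$ (rather than its square root) on the $\abs{Eg_{i_1,\ldots,i_l}}^2$ factor, which ensures that the resulting weight $w_{x_i}$ itself inherits the Schwartz-type decay and not merely its square root.
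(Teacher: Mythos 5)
Your proof is correct and follows essentially the same approach as the paper: apply the convolution identity $Eg_{i_1,\ldots,i_l} = Eg_{i_1,\ldots,i_l}*\eta$ from Lemma \ref{localconstancyconv}, split $\abs{\eta}$ and apply Cauchy--Schwarz, absorb $\norm{\eta}_{L^1}$ into the constant, define $w_{x_i}(y) = \abs{\eta(x_i-y)}$, and invoke property (3) of that lemma for the rapid decay. Your additional remark about keeping the full weight (rather than its square root) on the $\abs{Eg}^2$ factor is a sensible clarification but not a point of divergence.
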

            \begin{proof}
                Using Lemma \ref{localconstancyconv} write
                \begin{align*}
                    \abs{Eg_{i_1,i_2,...,i_l}(x_i)}^2 &= \abs{Eg_{i_1,i_2,...,i_l} * \eta(x_i)}^2\\
                    &=\abs{\int_{\R^d}Eg_{i_1,i_2,...,i_l}(y) \eta(x_i-y)dy}^2\\
                    & \leq \int_{\R^d} \abs{Eg_{i_1,...,i_l}(y)}^2\abs{\eta(x_i-y)}dy \int_{\mathbb{R}^d}\abs{\eta(x_i-y)}dy\\
                    & \lesssim \int_{\R^d} \abs{Eg_{i_1,i_2,...,i_l}(y)}^2 \abs{\eta (x_i-y)} dy
                \end{align*}
                where the third line follows by the Cauchy-Schwarz inequality. Setting $w_{x_i}(y) := \abs{\eta(x_i-y)}$, we see that as a consequence of Property 3 of Lemma \ref{localconstancyconv}, that $w_{x_i}(y) \leq C_N(1+ dist(x_i, y))^{-N}$.
            \end{proof}
            \begin{lemma}[Pointwise weight inequalities]\label{ptwise}
                Let $\{x_i\}_{i \in I}$ be a collection of points in $B_{R}$ that are $R^{- \epsilon}$-seperated. Then for any $y \in B_{2R}$, we have the pointwise weight inequality:
                \begin{equation*}
                    \sum_{i \in I}w_{x_i}(y) \lesssim_{\epsilon, d } R^{\epsilon d}.
                \end{equation*}
                \end{lemma}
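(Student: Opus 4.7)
The plan is to reduce the problem to a dyadic packing count by invoking the Schwartz decay of $\eta$ from Lemma \ref{localconstancyconv}(3). Since $w_{x_i}(y) = \abs{\eta(x_i - y)} \leq C_N(1+|x_i-y|)^{-N}$ for any $N \in \mathbb{N}$, with $C_N$ independent of $i$, it suffices to bound $\sum_{i \in I}(1+|x_i-y|)^{-N}$ for a sufficiently large (but fixed) $N$ depending only on $d$.

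First I would partition $I$ into dyadic shells centered at $y$: set $A_0 = \{i \in I : |x_i - y| \leq 1\}$ and for $k \geq 1$ let $A_k = \{i \in I : 2^{k-1} < |x_i - y| \leq 2^k\}$. The $R^{-\epsilon}$-separation hypothesis implies that the open balls of radius $R^{-\epsilon}/2$ centered at the $x_i$'s are pairwise disjoint, and for $i \in A_k$ these balls all lie inside the ball of radius $2^k + R^{-\epsilon}/2$ about $y$. Comparing Lebesgue measures yields the packing count $\mathrm{card}(A_k) \lesssim_d (2^k R^\epsilon)^d$.

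Combining this with the pointwise bound $w_{x_i}(y) \lesssim_N 2^{-kN}$ on $A_k$ for $k \geq 1$ and $w_{x_i}(y) \lesssim_N 1$ on $A_0$ gives
\begin{equation*}
    \sum_{i \in I} w_{x_i}(y) \lesssim_{N, d} R^{\epsilon d} + \sum_{k \geq 1} 2^{kd} R^{\epsilon d} \cdot 2^{-kN} = R^{\epsilon d}\left(1 + \sum_{k \geq 1} 2^{k(d-N)}\right).
\end{equation*}
Fixing any $N > d$ (say $N = d+1$) makes the geometric series converge, producing the claimed bound $\lesssim_{\epsilon, d} R^{\epsilon d}$. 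There is no substantive obstacle here: the argument is mechanical, and the only items demanding mild care are that $N$ (and hence $C_N$) must be chosen independently of $I$ and $y$ before summing, and that the packing constant properly absorbs the factor introduced by enlarging the ambient radius from $2^k$ to $2^k + R^{-\epsilon}/2$.
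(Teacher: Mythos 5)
Your proof is correct and follows essentially the same route as the paper: a dyadic annular decomposition about $y$, a volume packing bound using the $R^{-\epsilon}$-separation, the Schwartz decay of $w_{x_i}$ with a fixed exponent, and a geometric sum convergent once $N>d$. The only cosmetic difference is that you place the shells at unit dyadic scales $[2^{k-1},2^k]$, whereas the paper places them at scales $2^kR^{-\epsilon}$ and bounds $\mathrm{card}(C_k\cap I)\lesssim 2^{kd}$; your variant is, if anything, a touch cleaner since it produces the stated $R^{\epsilon d}$ directly, while the paper's computation (with $N=2d$) literally yields $R^{2\epsilon d}$, which is harmless in context but requires relabelling $\epsilon$ to match the lemma as stated.
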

                \begin{proof}
                    Fix any $y \in B_{2R}$. for each $k \in \N$, let $C_{k}= \{x \in \R^d: 2^{k-1}R^{-\epsilon}\leq dist(x,y) < 2^kR^{-\epsilon}\}$. Note that we have $\mathrm{vol}(C_k) \sim_{d} 2^{kd}R^{-d \epsilon}$. Since $I$ is a maximal $R^{-\epsilon}$ separated set, $\mathrm{card}({C_k \cap I}) \lesssim_{d} 2^{kd}$. Thus
                    \begin{align*}
                        \sum_{i \in I} w_{i}(y) &\lesssim \sum_{k \in \N } \sum_{i \in C_k \cap I} w_{i}(y)\\
                        &\leq C_N \sum_{k \in \N} 2^{kd} \cdot (2^k R^{-\epsilon})^{-N}
                    \end{align*}
                    where the above inequality holds for any $N \in \mathbb{N}$, by Lemma \ref{weightineq}. Setting $N = 2d$ completes the proof.
                \end{proof}
                The next lemma will be crucially used in the proof of Proposition \ref{mainproppointwise}.
            
            \begin{lemma}\label{poissonsummationsub}
                Let $\{x_j\}_{i \in I}$ be a $R^{-\epsilon}$- separated subset of $B_{R}$. Then
                \begin{equation*}
                    \sum_{j \in I} \abs{Eg(x_j)}^2 \lesssim_{\epsilon} R^{\epsilon d}\int_{B_{2R}}\abs{Eg(x)}^2 dx + R^{-10000d}\norm{g}_{L^2(\Sigma)}^2
                \end{equation*}
                \end{lemma}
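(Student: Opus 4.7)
The plan is to combine the pointwise local constancy bound of Lemma \ref{weightineq} with the weight summation estimate of Lemma \ref{ptwise}, and then separately treat the contribution coming from points far from $B_R$ using rapid Schwartz decay together with the trivial pointwise bound on $Eg$.

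First, I would apply Lemma \ref{weightineq} with $l=0$ to each sample point $x_j$, obtaining
\begin{equation*}
\abs{Eg(x_j)}^2 \lesssim \int_{\R^d} \abs{Eg(y)}^2 w_{x_j}(y)\,dy,
\end{equation*}
where $w_{x_j}(y) \leq C_N(1+\mathrm{dist}(x_j,y))^{-N}$ for every $N \in \N$. Summing over $j \in I$ and interchanging the (finite, or at worst monotone) sum with the integral gives
\begin{equation*}
\sum_{j \in I} \abs{Eg(x_j)}^2 \lesssim \int_{\R^d} \abs{Eg(y)}^2 \Bigl(\sum_{j \in I} w_{x_j}(y)\Bigr) dy.
\end{equation*}

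Next I would split this integral into the near region $y \in B_{2R}$ and the far region $y \in \R^d \setminus B_{2R}$. In the near region, Lemma \ref{ptwise} directly yields $\sum_{j \in I} w_{x_j}(y) \lesssim_{\epsilon,d} R^{\epsilon d}$, so the near contribution is bounded by $R^{\epsilon d}\int_{B_{2R}}\abs{Eg(y)}^2 dy$, which is the first term on the right-hand side.

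For the far region, observe that every $x_j$ lies in $B_R$, so if $\abs{y} > 2R$ then $\abs{y-x_j} \geq \abs{y}-R \geq \abs{y}/2$, hence $w_{x_j}(y) \leq C_N(1+\abs{y}/2)^{-N}$ uniformly in $j$. Since $\mathrm{card}(I) \lesssim_{\epsilon,d} R^{d+\epsilon d}$ by the $R^{-\epsilon}$-separation inside $B_R$, we get $\sum_{j \in I} w_{x_j}(y) \lesssim_{\epsilon,d} R^{d+\epsilon d}(1+\abs{y})^{-N}$. Moreover, the trivial pointwise bound on extension operators together with compactness of $\Sigma$ gives $\abs{Eg(y)} \leq \sigma(\Sigma)^{1/2}\norm{g}_{L^2(\Sigma)} \lesssim \norm{g}_{L^2(\Sigma)}$ uniformly in $y \in \R^d$. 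Combining these yields
\begin{equation*}
\int_{\abs{y}>2R} \abs{Eg(y)}^2 \sum_{j \in I} w_{x_j}(y)\,dy \lesssim_{\epsilon,d} R^{d+\epsilon d}\norm{g}_{L^2(\Sigma)}^2 \int_{\abs{y}>2R}(1+\abs{y})^{-N} dy \lesssim_N R^{2d+\epsilon d - N}\norm{g}_{L^2(\Sigma)}^2,
\end{equation*}
and choosing $N$ sufficiently large (for instance $N = 10002d$) absorbs all polynomial factors in $R$ and yields the desired $R^{-10000d}\norm{g}_{L^2(\Sigma)}^2$ tail.

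There is no real obstacle here, since both ingredients (pointwise local constancy and the weight-summation lemma) are already established; the only subtle point is the bookkeeping in the far region, where one must remember to account for the cardinality of $I$ when summing the rapidly decaying weights, and pick $N$ large enough to dominate both $R^{d+\epsilon d}$ (from the cardinality) and the desired $R^{-10000d}$ gain. All other steps are mechanical applications of Fubini, the triangle inequality, and the Schwartz decay of the bump function $\eta$.
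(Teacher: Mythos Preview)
Your proof is correct and follows the same overall architecture as the paper: apply Lemma~\ref{weightineq} pointwise, sum, split into $B_{2R}$ and its complement, and invoke Lemma~\ref{ptwise} on the near piece. The one genuine difference is in the far region: the paper bounds $\int_{\abs{y}>2R}\abs{Eg(y)}^2\bigl(\sum_j w_{x_j}(y)\bigr)\,dy$ by H\"older's inequality followed by the Tomas--Stein restriction estimate $\norm{Eg}_{2(d+1)/(d-1)} \lesssim \norm{g}_{L^2(\Sigma)}$, whereas you use only the trivial Cauchy--Schwarz bound $\abs{Eg(y)} \leq \sigma(\Sigma)^{1/2}\norm{g}_{L^2(\Sigma)}$. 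Your route is strictly more elementary---no restriction theory is needed for this tail term---and the bookkeeping (tracking $\mathrm{card}(I)\lesssim R^{d(1+\epsilon)}$ and choosing $N$ large) is done carefully. The paper's use of Tomas--Stein here is harmless but unnecessary overkill.
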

                \begin{proof}
                    By Lemma \ref{weightineq}, we have that $\abs{Eg(x_j)}^2 \lesssim \int_{\R^d}\abs{Eg(y)}^2w_{x_j}(y)dy$ for each $i \in I$. Thus
                    \begin{align*}
                        \sum_{j \in I} \abs{Eg(x_j)}^2 &\lesssim \int_{R^d} \abs{Eg(y)}^2 \left(\sum_{j \in I}w_j(y) \right) dy\\
                        &=\int_{B_{2R}} \abs{Eg(y)}^2 \left(\sum_{j \in I}w_j(y) \right) dy + \int_{\R^d \setminus B_{2R}} \abs{Eg(y)}^2 \left(\sum_{j \in I}w_j(y) \right) dy
                    \end{align*}
                    The pointwise weight inequality given in Lemma \ref{ptwise} shows that the first term is $\lesssim_{\epsilon} R^{\epsilon d}\int_{B_{2R}}\abs{Eg(x)}^2 dx$. To estimate the second term, we note that if $\abs{y} \geq 2R$, then $\sum_{i \in I} w_{x_i}(y) \leq C_N \mathrm{card}(I)(\abs{y}-R)^{-N} \lesssim_{d} C_{N}R^{(1+\epsilon)d} (\abs{y}-R)^{N}$. Using the H\"older and Tomas-Stein inequalities we have:
                    \begin{align*}
                        \int_{\R^d \setminus B_{2R}} \abs{Eg(y)}^2 \left(\sum_{i \in I}w_i(y) \right) dy &\lesssim \norm{Eg}_{\frac{2d+2}{d-1}}^2 C_N\left(\int_{\abs{y} \geq 2R}(\abs{y}-R)^{-\frac{N(d+1)}{2}}\right)^{\frac{2}{d+1}}\\
                        &\lesssim_{\epsilon, d, N} \norm{g}_{L^2(\Sigma)}^2\left(\int_{\abs{y} \geq R} u^{-\frac{N(d+1)}{2}}du\right)^{\frac{2}{d+1}}
                    \end{align*}
                    Choosing $N$ to be sufficiently large (e.g. $N=10^{6}d$) will result in the integral above being bounded by (up to an absolute constant depending on $d$) $R^{-10000d}$.
                \end{proof}
                \begin{remark}
                    One can replace $Eg$ with $Eg_{i_1,i_2,...,i_l}$ in the above proof, and an analogous statement is true.
                \end{remark}
        At many stages in the proof we rely on this classical fact that the Mizohata--Takeuchi Conjecture is known to be true when $w$ is the characteristic function of $B_R$. This is referred to in the literature as the Agmon--H\"ormander Trace inequality. The proof is well known and is essentially a consequence of Plancherel's theorem, but we record it here for completeness.
        \begin{proposition}[Agmon--H\"ormander Trace Inequality, M--T conjecture is true for $w(x)= \mathbbm{1}_{B_R}$]
            \begin{equation*}
                \int_{B_R}\abs{Eg(x)}^2dx \lesssim R\int_{\Sigma}\abs{g(\omega)}^2d\sigma(\omega).
            \end{equation*}
        \end{proposition}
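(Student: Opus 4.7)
The plan is to pass from $\mathbbm{1}_{B_R}$ to a Schwartz majorant whose Fourier transform is compactly supported, reduce the resulting integral to a bilinear form on $\Sigma\times\Sigma$, and close the estimate via Schur's test. First I would fix a Schwartz function $\psi$ on $\R^d$ with $\psi \geq \mathbbm{1}_{B_1}$ and $\widehat{\psi}$ supported in the unit ball $B_1$; such a $\psi$ exists, for instance by taking $\psi = c |\eta|^2$ where $\eta$ is a Paley--Wiener Schwartz function with $\widehat{\eta}$ supported in $B_{1/2}$ and $\eta \geq c^{-1/2}$ on $B_1$. Rescaling, I set $\psi_R(x) = \psi(x/R)$, so that $\psi_R \geq \mathbbm{1}_{B_R}$ and $\widehat{\psi_R}(\xi) = R^d \widehat{\psi}(R\xi)$ is supported in $B_{1/R}$ with $\|\widehat{\psi_R}\|_\infty \lesssim R^d$.

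With the majorant in hand, I would write
\begin{equation*}
\int_{B_R}\abs{Eg(x)}^2 dx \leq \int_{\R^d}\abs{Eg(x)}^2 \psi_R(x)\, dx,
\end{equation*}
and expand $\abs{Eg(x)}^2$ as a double integral over $\Sigma\times\Sigma$. Applying Fubini gives
\begin{equation*}
\int_{\R^d}\abs{Eg(x)}^2 \psi_R(x)\, dx = \int_\Sigma\int_\Sigma g(\omega)\overline{g(\omega')}\, \widehat{\psi_R}(\omega'-\omega)\, d\sigma(\omega)\, d\sigma(\omega'),
\end{equation*}
since $\int e^{2\pi i(\omega-\omega')\cdot x}\psi_R(x)\, dx = \widehat{\psi_R}(\omega'-\omega)$ (with the convention $\widehat{f}(\xi)=\int f(x)e^{-2\pi i\xi\cdot x}dx$).

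Next I would apply Schur's test to the kernel $K(\omega,\omega') := \widehat{\psi_R}(\omega'-\omega)$. Since $K$ is symmetric in modulus, it suffices to control
\begin{equation*}
\sup_{\omega\in\Sigma}\int_\Sigma \abs{K(\omega,\omega')}\, d\sigma(\omega').
\end{equation*}
Because $\widehat{\psi_R}$ is supported in $B_{1/R}$ and $\Sigma$ is a smooth compact $(d-1)$-dimensional hypersurface, $\sigma(\{\omega'\in\Sigma: \abs{\omega'-\omega}\leq 1/R\}) \lesssim R^{-(d-1)}$. Combining with $\|\widehat{\psi_R}\|_\infty \lesssim R^d$ yields $\int_\Sigma |K(\omega,\omega')|\, d\sigma(\omega') \lesssim R^d\cdot R^{-(d-1)} = R$ uniformly in $\omega$, so Schur's test delivers the desired bound $R\norm{g}_{L^2(\Sigma)}^2$.

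The plan has essentially no real obstacle: the existence of the Schwartz majorant is standard, and the surface-measure estimate $\sigma(B_{1/R}(\omega)\cap\Sigma)\lesssim R^{-(d-1)}$ uses only smoothness of $\Sigma$, with no curvature hypothesis (consistent with the fact that the Agmon--H\"ormander inequality is curvature-free). The factor $R$ in the final bound arises naturally as the mismatch between the $L^\infty$ height $R^d$ of $\widehat{\psi_R}$ and the $(d-1)$-dimensional scaling of the surface measure on $\Sigma$.
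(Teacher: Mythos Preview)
Your proof is correct and takes a genuinely different route from the paper's. The paper proceeds by locally parameterizing $\Sigma$ as a graph $(\omega',\Sigma(\omega'))$ with $\omega'\in[0,1]^{d-1}$, writing $Eg(x',x_d)$ as a $(d-1)$-dimensional Fourier transform in $\omega'$ of the function $g(\omega',\Sigma(\omega'))e^{2\pi i\Sigma(\omega')x_d}N(\omega')$, applying Plancherel in the $x'$ variables for each fixed $x_d$, and then integrating trivially in $x_d$ over an interval of length $\sim R$ to pick up the factor $R$. Your approach instead replaces $\mathbbm{1}_{B_R}$ by a Schwartz majorant with Fourier support in $B_{1/R}$, unfolds $|Eg|^2$ into a bilinear form on $\Sigma\times\Sigma$ with kernel $\widehat{\psi_R}(\omega'-\omega)$, and closes with Schur's test using only the dimensional bound $\sigma(\Sigma\cap B_{1/R}(\omega))\lesssim R^{-(d-1)}$. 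The paper's argument is slightly more bare-hands (it needs only Plancherel and a local graph chart), while yours is coordinate-free and makes the mechanism more transparent: the factor $R$ emerges as $R^d\cdot R^{-(d-1)}$, i.e.\ the height of $\widehat{\psi_R}$ against the $(d-1)$-dimensional scaling of surface measure. Your framework also adapts immediately to more general measures supported on sets satisfying an upper Ahlfors regularity condition, which the graph-parameterization argument does not.
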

            Since $\Sigma$ is a compact hypersurface, we may (locally) parameterize $\Sigma$ as $(\omega',\Sigma(\omega'))$ where $\omega' = (\omega_1,...,\omega_{d-1})$ with $\omega' \in [0,1]^{d-1}$ (say). Denote the spatial variable $x$ as $(x',x_d)$, where $x'=(x_1,x_2,...,x_{d-1})$. We may write
            \begin{equation*}
                \int_{B_R} \abs{Eg(x)}^2dx = \int_{B_R} \abs{\int_{[0,1]^{d-1}}g(w',\Sigma(\omega')) e^{2\pi i \omega' \cdot x'} e^{2\pi i \Sigma(\omega') \cdot x_d }N(\omega') d \omega'}^2 dx'dx_d
            \end{equation*}
        where $\abs{N(\omega')} \lesssim_{d} 1$ for all $\omega' \in [0,1]^{d-1}$, with $\norm{g}_{L^2(\Sigma)} \sim \norm{\tilde{g}}_{L^2[0,1]^{d-1}}$, where $\tilde{g}(\omega') \coloneqq g(\omega, \Sigma(\omega))$. By integrating in $x'$ first and using Plancherel's theorem (with repsect to the function $g(w',\Sigma(\omega'))e^{2 \pi i \Sigma(\omega') \cdot x_d}N(\omega')$), and then integrating in $x_d$, we see that the above expression can by bounded by $\sim R\norm{g}_{{L^2(\Sigma)}}^2$ as desired.
        \begin{remark}
            A similar statement is true for the operator $Eg_{i_1,i_2,...,i_l}$ as well, by the exact same proof.
        \end{remark}
\newpage
\bibliographystyle{plain}
\bibliography{references}
\end{document}